\documentclass[a4paper,11pt]{amsart}

\usepackage{amsmath, amsthm, amsfonts, amssymb, enumerate}
\usepackage[bookmarks=false]{hyperref} 
\usepackage{xcolor}
\usepackage{verbatim}
\usepackage[normalem]{ulem}

\setlength{\oddsidemargin}{0pt}
\setlength{\evensidemargin}{0pt}
\setlength{\topmargin}{-5pt}
\setlength{\textheight}{640pt}
\setlength{\textwidth}{460pt}
\setlength{\headsep}{40pt}
\setlength{\parindent}{0pt}
\setlength{\parskip}{1ex plus 0.5ex minus 0.2ex}

\title[A unique Cartan  result for Bernoulli actions of weakly amenable groups]{A unique Cartan subalgebra result for Bernoulli actions of weakly amenable groups}
\author{Changying Ding}

\address{Department of Mathematics, UCLA, Los Angeles, CA 90095, USA}
\email{cding@math.ucla.edu}

\newtheorem{thm}{Theorem}[section]
\newtheorem{prop}[thm]{Proposition}

\newtheorem{cor}[thm]{Corollary}
\newtheorem{lem}[thm]{Lemma}
\theoremstyle{definition}

\newtheorem{defn/lem}[thm]{Definition/Lemma}

\newcommand{\B}{{\mathbb B}}
\newcommand{\C}{{\mathbb C}}
\newcommand{\F}{{\mathbb F}}

\newcommand{\K}{{\mathbb K}}
\newcommand{\M}{{\mathbb M}}
\newcommand{\N}{{\mathbb N}}

\newcommand{\Q}{{\mathbb Q}}
\newcommand{\R}{{\mathbb R}}
\newcommand{\bS}{{\mathbb S}}
\newcommand{\T}{{\mathbb T}}

\newcommand{\X}{{\mathbb X}}
\newcommand{\Y}{{\mathbb Y}}
\newcommand{\Z}{{\mathbb Z}}
\newcommand{\cP}{{\mathcal P}}

\newcommand{\cG}{{\mathcal G}}
\newcommand{\cH}{{\mathcal H}}

\newcommand{\cK}{{\mathcal K}}

\newcommand{\cN}{{\mathcal N}}

\newcommand{\cU}{{\mathcal U}}

\newcommand{\cZ}{{\mathcal Z}}

\newcommand{\Ad}{\operatorname{Ad}}

\newcommand{\Aut}{\operatorname{Aut}}

\newcommand{\id}{\operatorname{id}}

\newcommand{\ot}{\otimes}
\newcommand{\mot}{\otimes_{\rm min}}
\newcommand{\ovt}{\, \overline{\otimes}\,}

\newcommand{\ds}{{\sharp\kern-.5pt\sharp}}

\newcommand{\actson}{{\, \curvearrowright \,}}

\newcommand{\reforder}[1]{}

\makeatletter
\DeclareRobustCommand\frownotimes{\mathbin{\mathpalette\frown@otimes\relax}}
\newcommand{\frown@otimes}[2]{%
  \vbox{
    \ialign{##\cr
      \hidewidth$\m@th#1{}_\frown$\kern-\scriptspace\hidewidth\cr
      \noalign{\nointerlineskip\kern-1pt}
      $\m@th#1\otimes$\cr
    }%
  }%
}
\makeatother

\begin{document}
\maketitle
\begin{abstract}
We show that if $\Gamma\actson (X^\Gamma,\mu^\Gamma)$ is a Bernoulli action of an i.c.c.\ nonamenable group $\Gamma$ which is weakly amenable with Cowling-Haagerup constant $1$, and $\Lambda\actson(Y,\nu)$ is a free ergodic p.m.p.\ algebraic action of a group $\Lambda$,
	then the isomorphism $L^\infty(X^\Gamma)\rtimes\Gamma\cong L^\infty(Y)\rtimes\Lambda$ implies that $L^\infty(X^\Gamma)$ and $L^\infty(Y)$ are unitarily conjugate.
This is obtained by showing a new rigidity result of non properly proximal groups 
	and combining it with a rigidity result of properly proximal groups from \cite{BoIoPe21}.
\end{abstract}

\section{Introduction}

The group measure space construction associates to every probability measure preserving (p.m.p.) action
$\Gamma\actson (X,\mu)$ of a countable group $\Gamma$, a finite von Neumann algebra $L^\infty(X)\rtimes\Gamma$ \cite{MuvN43}.
When the action is free and ergodic, $L^\infty(X)\rtimes\Gamma$ is a II$_1$ factor and $L^\infty(X)$ is a Cartan subalgebra.
With the discovery of Popa's deformation/rigidity theory \cite{Popa06, Po06B, Po06C}, 
	spectacular progress has been made in the classification and structural results of II$_1$ factors
	(see surveys \cite{Po07B, Va10, Io18}),
	and specifically, group measure space II$_1$ factors arising from Bernoulli actions 
	have been shown to possess extreme rigidity (see e.g. \cite{Po06B, Po06C, Po08, Ioa11, IoPoVa13}).
In fact, a conjecture of Popa states that if $\Gamma\actson(X^\Gamma, \mu^\Gamma)$ is
	a Bernoulli action of a nonamenable group $\Gamma$, then $L^\infty(X^\Gamma)\rtimes\Gamma$ has a unique Cartan subalgebra, up to unitary conjugacy \cite[Problem III]{Io18}.
Such unique Cartan subalgebra results play a crucial role in the classification of group measure space II$_1$ factors,
	as they allow one to reduce the classification of group measure space II$_1$ factors 
	to the classification of the orbit equivalence relations of the corresponding group actions \cite{Si55}.

Although this conjecture of Popa remains open in its full generality,	
significant progress has been made towards it during the last 15 years.
To mention a few breakthrough results, Ioana showed that $L^\infty(X^\Gamma)\rtimes\Gamma$ has a unique group measure space Cartan subalgebra
	when $\Gamma$ has Property (T) \cite{Ioa11}.
Subsequently, Ioana, Popa and Vaes showed the same conclusion holds if $\Gamma$ is a product group \cite{IoPoVa13}.
In another direction, Popa and Vaes proved that $L^\infty(Y)\rtimes\Gamma$ has a unique Cartan subalgebra for any free ergodic p.m.p.\ action $\Gamma\actson Y$,  
	when $\Gamma$ is weakly amenable and satisfies that either $\Gamma$ has positive first $\ell^2$-Betti number \cite{PoVa14I} or $\Gamma$ is biexact \cite{PoVa14II}.
Building upon \cite{PoVa14I}, Ioana showed that the same conclusion holds if $\Gamma$ is a free product group \cite{Ioa15}.
More recently, Boutonnet, Ioana and Peterson generalized \cite{PoVa14II} to groups that are weakly amenable and properly proximal \cite{BoIoPe21}.
	
In view of \cite{PoVa14I, PoVa14II},
 	to make further progress towards this conjecture,
	one may first consider the question of showing $L^\infty(X^\Gamma)\rtimes\Gamma$ has 
	a unique group measure space Cartan subalgebra for weakly amenable $\Gamma$,
	as suggested in \cite[Chapter V]{Bou14}.
Moreover, in light of \cite{BoIoPe21}, 
	it suffices to consider this question for groups that are non properly proximal.
Our main theorem is towards this direction.

\begin{thm}\label{thm: recover Bernoulli}
Let $\Gamma$ be a countable discrete nonamenable i.c.c. group, $(X_0, \mu_0)$ a diffuse standard probability space and $\Gamma\actson (X_0^\Gamma, \mu_0^\Gamma)=:(X,\mu)$ a Bernoulli action. 
Suppose $L^\infty(X)\rtimes\Gamma\cong L\Lambda$
	for some countable discrete group $\Lambda$.
If $\Gamma$ is non properly proximal and weakly amenable with Cowling-Haagerup constant $1$, 
	then $\Lambda\cong\Sigma\rtimes \Gamma$
	for some infinite abelian group $\Sigma$ such that $\Gamma\actson \Sigma$ by automorphisms,
	and $\Gamma\actson X$, $\Gamma\actson \widehat \Sigma$ are conjugate p.m.p.\ actions.
\end{thm}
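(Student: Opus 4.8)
The plan is to combine three inputs: a new unique group measure space Cartan subalgebra statement for Bernoulli actions of non properly proximal, weakly amenable (Cowling--Haagerup constant $1$) groups; the comultiplication technique of \cite{IoPoVa13}, which transports the Cartan information to the group $\Lambda$; and Popa's cocycle superrigidity for malleable actions with spectral gap \cite{Po08}, which upgrades an orbit equivalence into a conjugacy. Throughout write $M:=L^\infty(X)\rtimes\Gamma=L\Lambda$ and $A:=L^\infty(X)$, and let $(u_g)_{g\in\Gamma}$, $(v_\lambda)_{\lambda\in\Lambda}$ be the canonical unitaries; since the Bernoulli action of the nonamenable group $\Gamma$ has spectral gap, $M$ is a II$_1$ factor without property Gamma. \emph{Step 1.} I first prove that every group measure space Cartan subalgebra $B\subseteq M$ is unitarily conjugate to $A$, and similarly for $M\ovt M$ (which is of the same form with $\Gamma\times\Gamma$ in place of $\Gamma$, once one checks that the admissible class of groups is stable under direct products and that the doubled Bernoulli action stays malleable with spectral gap). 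The three tools are: weak amenability with $\Lambda_{cb}(\Gamma)=1$, which by a weak compactness argument in the spirit of Ozawa--Popa forces the action $\mathcal N_M(B)\actson B$ to be weakly compact; Popa's $s$-malleable deformation $(\alpha_t)$ of an ambient algebra $\widetilde M\supseteq M$ attached to the Bernoulli structure; and spectral gap of the Bernoulli action coming from nonamenability of $\Gamma$. Feeding these into Popa's transversality/spectral-gap machinery produces a dichotomy for $B$: either $(\alpha_t)$ converges uniformly on $(B)_1$, or $B\preceq_M A$. Non proper proximality is exactly what excludes the uniform-convergence alternative (the case that, for a properly proximal group, is instead handled through the rigidity theorem of \cite{BoIoPe21}), so one always lands in $B\preceq_M A$; as $B$ and $A$ are both Cartan, this upgrades to unitary conjugacy.

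\emph{Step 2.} Let $\Delta\colon M\to M\ovt M$ be the normal $*$-embedding with $\Delta(v_\lambda)=v_\lambda\otimes v_\lambda$. Then $\Delta(A)$ is abelian and $\mathcal N_{M\ovt M}(\Delta(A))''\supseteq\Delta(M)$, while $\Delta(M)$ has spectral gap in $M\ovt M$ with $\Delta(M)'\cap(M\ovt M)=\C$ because $M$ is non-Gamma. Applying the rigidity input of Step 1 in each tensor leg of $M\ovt M$ — exploiting the Bernoulli structure of the second, resp.\ first, copy — yields the two marginal intertwinings $\Delta(A)\preceq_{M\ovt M}M\ovt A$ and $\Delta(A)\preceq_{M\ovt M}A\ovt M$, and the standard lemma for combining marginals then gives $\Delta(A)\preceq_{M\ovt M}(M\ovt A)\cap(A\ovt M)=A\ovt A$.

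\emph{Step 3.} From $\Delta(A)\preceq_{M\ovt M}A\ovt A$, together with the regularity of $\Delta(A)$ inside $\Delta(M)$ and the comultiplicativity of $\Delta$, the intertwining-to-subgroup lemmas in the spirit of \cite{IoPoVa13} show that $A$ is unitarily conjugate in $M$ to $L\Sigma$ for an abelian \emph{normal} subgroup $\Sigma\trianglelefteq\Lambda$, with $\Gamma_\Lambda:=\Lambda/\Sigma$ acting freely and ergodically on $A=L^\infty(\widehat\Sigma)$ and $M=A\rtimes\Gamma_\Lambda$. Comparing with $M=A\rtimes\Gamma$ over the \emph{same} Cartan $A$, the actions $\Gamma\actson X$ and $\Gamma_\Lambda\actson\widehat\Sigma$ generate the same orbit equivalence relation; let $c\colon\Gamma\times X\to\Gamma_\Lambda$ be the resulting rearrangement cocycle. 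Since $\Gamma$ is nonamenable, $\Gamma\actson X=X_0^\Gamma$ is a malleable action with spectral gap, so Popa's cocycle superrigidity theorem \cite{Po08} makes $c$ cohomologous to a homomorphism $\delta\colon\Gamma\to\Gamma_\Lambda$, which a routine argument forces to be an isomorphism carrying $\Gamma\actson X$ onto $\Gamma_\Lambda\actson\widehat\Sigma$. A final cocycle untwisting — applying superrigidity once more to the $2$-cocycle obstructing a group-level lift of $\Gamma_\Lambda$ into $\Lambda$ — shows the extension $1\to\Sigma\to\Lambda\to\Gamma_\Lambda\to1$ splits, so $\Lambda\cong\Sigma\rtimes\Gamma$ with $\Gamma\actson\widehat\Sigma$ conjugate to the Bernoulli action $\Gamma\actson X$; here $\Sigma$ is infinite because $X_0$ is diffuse.

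\emph{Main obstacle.} The crux is Step 1: for a group that is weakly amenable with constant $1$ yet neither biexact nor properly proximal — the prototype being $\Z\wr\F_2$ — none of the unique-Cartan dichotomies of \cite{PoVa14I,PoVa14II,BoIoPe21} apply, so the rigidity must be squeezed out purely from the interplay of weak compactness with Popa's Bernoulli deformation and spectral gap, with non proper proximality used precisely to eliminate the residual ``amenable at infinity'' alternative. A secondary burden is that the comultiplication forces the whole analysis to be redone for $M\ovt M$, so one needs the admissible class of groups to be closed under direct products and the doubled Bernoulli action to remain malleable with spectral gap.
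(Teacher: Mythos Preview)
Your Step 1 contains the essential gap. The dichotomy you state — ``either $\alpha_t$ converges uniformly on $(B)_1$, or $B\preceq_M A$'' — is not what the weak compactness plus malleable deformation machinery actually yields; the standard output is ``either uniform convergence (hence $B\prec_M L\Gamma$), or the normalizer is amenable relative to $L\Gamma$'', and the second branch is vacuous here since $M=A\rtimes\Gamma$ is always amenable relative to $L\Gamma$. More importantly, your assertion that ``non proper proximality is exactly what excludes the uniform-convergence alternative'' has no content: non proper proximality of $\Gamma$ places no obstruction on an abstract Cartan $B$ being rigid with respect to the Bernoulli deformation, and nothing you wrote connects the two. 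In \cite{BoIoPe21} proper proximality is not used to ``handle'' a uniform-convergence branch of such a dichotomy; it is a separate mechanism altogether.

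The paper's route is entirely different, both in target and in how non proper proximality enters. One never attempts a direct unique-Cartan statement. Instead one works with $\Delta(L\Gamma)\subset M\ovt M$ (a nonamenable subfactor, not an abelian algebra) and shows it can be unitarily conjugated into $L\Gamma\ovt L\Gamma$; this is precisely Step~1 of \cite[Theorem 8.2]{IoPoVa13}, after which the remaining steps of that theorem apply verbatim. Non proper proximality is used \emph{positively}: it produces an $L\Gamma$-central state on $\tilde\bS(L\Gamma)$, which transports through the comultiplication to a $\Delta(L\Gamma)$-central state on a relative small-at-infinity boundary of $M\ovt M$ (Corollary~\ref{cor: non prop prox comultiplication 1}). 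This is played against the relative biexactness of $M\ovt M$ over $\{L\Gamma\ovt M, M\ovt L\Gamma\}$ and the almost malnormality of $\Gamma<\Z\wr\Gamma$ to force relative amenability of $\Delta(L\Gamma)$ to $L\Gamma\ovt M$ and $M\ovt L\Gamma$ (Proposition~\ref{prop: non prop prox to rel amen}); only then does the Bernoulli deformation enter, via Proposition~\ref{prop: intertwining from rel amen}, to produce the intertwining. The assumption $\Lambda_{\rm cb}(\Gamma)=1$ is used in the second pass (Proposition~\ref{prop: dichotomy}) to recover relative proper proximality of the conjugated copy inside $M\ovt L\Gamma$, not to get weak compactness of a Cartan normalizer. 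Your Steps 2--3, targeting $\Delta(A)\prec A\ovt A$, follow neither \cite{Ioa11} nor \cite{IoPoVa13}, both of which work with $\Delta(L\Gamma)$.
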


Piecing Theorem \ref{thm: recover Bernoulli} and \cite{BoIoPe21} together, 
	we obtain the following result that does not involve proper proximality.
Recall that an algebraic action $\Lambda\actson Y$ is a homomorphism from $\Lambda$ to $\Aut(Y)$,
	with $Y$ a compact metrizable abelian group.

\begin{thm}\label{thm: semidirect product CMAP}
Let $\Gamma$ be a countable discrete i.c.c.\ nonamenable group, 
	$(X_0,\mu_0)$ a diffuse standard probability space
	and $\Gamma\actson^\sigma (X_0^\Gamma, \mu_0^\Gamma)=:(X,\mu)$ the Bernoulli action.
Suppose $\Gamma$ is weakly amenable with Cowling-Haagerup constant $1$.
For any group $\Lambda$ and any free ergodic p.m.p.\ algebraic action $\Lambda\actson Y$, 
	if $L^\infty(X)\rtimes\Gamma\cong L^\infty(Y)\rtimes\Lambda$, 
	then $L^\infty(X)$ and $L^\infty(Y)$ are unitarily conjugate.
\end{thm}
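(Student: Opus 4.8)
The plan is to reduce Theorem \ref{thm: semidirect product CMAP} to the combination of Theorem \ref{thm: recover Bernoulli} and the main result of \cite{BoIoPe21}, according to whether $\Gamma$ is properly proximal or not. First I would dispose of the properly proximal case: if $\Gamma$ is properly proximal (in addition to being weakly amenable with Cowling--Haagerup constant $1$), then by \cite{BoIoPe21} the II$_1$ factor $M:=L^\infty(X)\rtimes\Gamma$ has a unique Cartan subalgebra up to unitary conjugacy. Since $L^\infty(X)$ and $L^\infty(Y)$ are both Cartan subalgebras of $M\cong L^\infty(Y)\rtimes\Lambda$ (the action $\Lambda\actson Y$ being free and ergodic), they must be unitarily conjugate, and we are done in this case.

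It remains to treat the case where $\Gamma$ is non properly proximal. Here the key observation is that an algebraic action $\Lambda\actson Y$ with $Y$ a compact metrizable abelian group has $L^\infty(Y)\rtimes\Lambda\cong L\Lambda'$ for a suitable group $\Lambda'$, by taking Fourier duals: $L^\infty(Y)=L\widehat{Y}$ with $\widehat{Y}$ a discrete abelian group on which $\Lambda$ acts by automorphisms, and $L^\infty(Y)\rtimes\Lambda=L\widehat{Y}\rtimes\Lambda\cong L(\widehat{Y}\rtimes\Lambda)$; set $\Lambda'=\widehat{Y}\rtimes\Lambda$. Thus the hypothesis $L^\infty(X)\rtimes\Gamma\cong L^\infty(Y)\rtimes\Lambda$ becomes $L^\infty(X)\rtimes\Gamma\cong L\Lambda'$, and Theorem \ref{thm: recover Bernoulli} applies: it yields an infinite abelian group $\Sigma$ with an action $\Gamma\actson\Sigma$ by automorphisms such that $\Lambda'\cong\Sigma\rtimes\Gamma$ and $\Gamma\actson X$ is conjugate to $\Gamma\actson\widehat{\Sigma}$. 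In particular, under the isomorphism $L\Lambda'\cong L^\infty(X)\rtimes\Gamma$, the subalgebra $L\Sigma\subseteq L\Lambda'$ corresponds (up to unitary conjugacy and the conjugacy $\Gamma\actson X\cong\Gamma\actson\widehat\Sigma$) to the Cartan subalgebra $L^\infty(X)$. On the other hand $L\Sigma=L\widehat{\Sigma}$ is a Cartan subalgebra of $L\Lambda'=L^\infty(Y)\rtimes\Lambda$, and I would need to see that it is unitarily conjugate to $L^\infty(Y)$ inside $L^\infty(Y)\rtimes\Lambda$.

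The main obstacle — and the one step requiring genuine care — is precisely this last identification: given that $L^\infty(X)\rtimes\Gamma\cong L^\infty(Y)\rtimes\Lambda=L(\widehat{Y}\rtimes\Lambda)$ has, via Theorem \ref{thm: recover Bernoulli}, a group measure space decomposition $L\Sigma\rtimes\Gamma$ with $L\Sigma$ unitarily conjugate to $L^\infty(X)$, why is $L^\infty(Y)$ unitarily conjugate to $L\Sigma$ (equivalently to $L^\infty(X)$)? The point is that $L^\infty(Y)$ and $L\Sigma$ are a priori two different Cartan subalgebras of the same factor, and we must invoke a uniqueness-of-Cartan statement. I would argue this by transporting the weak amenability / Cowling--Haagerup constant $1$ and the relevant rigidity from $\Gamma$ across the isomorphism — concretely, the factor $L^\infty(X)\rtimes\Gamma$ is itself weakly amenable with constant $1$ (being generated by a weakly amenable group with $\Lambda_{cb}=1$ acting on an amenable algebra), so by \cite{PoVa14II} or \cite{BoIoPe21} applied on the $L^\infty(Y)\rtimes\Lambda$ side — noting that any two group measure space Cartan subalgebras coming from actions on abelian algebras are here forced to agree up to conjugacy — one concludes $L^\infty(Y)\sim L\Sigma\sim L^\infty(X)$. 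Here one should be careful to use only the structure theory available: that $L\Sigma\rtimes\Gamma$ and $L^\infty(Y)\rtimes\Lambda$ are the same factor with $L\Sigma$ already known to be the (unique, by Theorem \ref{thm: recover Bernoulli}'s output combined with \cite{PoVa14I,PoVa14II}) group measure space Cartan, whence $L^\infty(Y)$, being another group measure space Cartan, is unitarily conjugate to it. Assembling the two cases completes the proof.
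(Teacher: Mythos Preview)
Your reduction in the properly proximal case is correct and matches the paper.

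In the non properly proximal case you correctly identify the obstacle, but your proposed resolution does not go through. You write that $L^\infty(Y)$ must be unitarily conjugate to $L\Sigma$ because ``$L\Sigma$ [is] already known to be the (unique, by Theorem~\ref{thm: recover Bernoulli}'s output combined with \cite{PoVa14I,PoVa14II}) group measure space Cartan.'' But neither \cite{PoVa14I} (positive first $\ell^2$-Betti number), \cite{PoVa14II} (biexactness), nor \cite{BoIoPe21} (proper proximality) applies to a non properly proximal group $\Gamma$ with $\Lambda_{\rm cb}(\Gamma)=1$; these results require specific properties of the \emph{acting group}, not merely W$^*$CMAP of the factor. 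And Theorem~\ref{thm: recover Bernoulli} itself, as stated, only gives an abstract group isomorphism $\widehat Y\rtimes\Lambda\cong \Sigma\rtimes\Gamma$ together with a conjugacy of actions; it does not assert that the given von Neumann algebra isomorphism carries $L\widehat Y$ to $L\Sigma$. Invoking ``unique group measure space Cartan'' here is precisely the statement you are trying to prove, so the argument is circular.

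The paper closes this gap differently. The proof of Theorem~\ref{thm: recover Bernoulli} actually yields the full conclusion of \cite[Theorem~8.2]{IoPoVa13}: the given isomorphism $\pi: L(\widehat Y\rtimes\Lambda)\to L(\Z\wr\Gamma)$ factors as $\Ad(w)\circ\pi_\eta\circ\pi_\theta\circ\pi_\delta$, where in particular $\delta:\widehat Y\rtimes\Lambda\to H\rtimes\Gamma$ is a group isomorphism and $\pi_\theta$ carries $LH$ onto $L(\oplus_\Gamma\Z)=L^\infty(X)$. One then needs the purely group-theoretic fact that $\delta(\widehat Y)<H$, i.e.\ that every nontrivial normal abelian subgroup of $H\rtimes\Gamma$ lies inside $H$ when $\Gamma$ is i.c.c.\ and $\Gamma\actson\widehat H$ is (conjugate to) a Bernoulli action. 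This is Lemma~\ref{lem: normal abelian subgroup}, proved by a direct support argument on $L^\infty([0,1]^\Gamma)$. Once $\delta(\widehat Y)<H$, following the chain $\pi_\delta,\pi_\theta,\pi_\eta,\Ad(w)$ gives $\Ad(w^*)\circ\pi(L^\infty(Y))\subset L(\oplus_\Gamma\Z)$, and equality follows since both are Cartan. This is the missing ingredient in your proposal.
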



As a side note, we point out that \cite[Question V.1.2]{Bou14} can be answered 
	by piecing together the properly proximal case and non properly proximal case,
	similar to Theorem~\ref{thm: semidirect product CMAP},
	only using existing results in the literature.
	
\begin{thm}\label{thm: strong rigidity}
Let $\Gamma$ be a nonamenable i.c.c.\ weakly amenable group and $\Gamma\actson X$ a free ergodic p.m.p.\ action.
Let $\Lambda$ be a nonamenable group and $\Lambda\actson [0,1]^\Lambda$ a Bernoulli action.
If $L^\infty(X)\rtimes\Gamma\cong L^\infty([0,1]^\Lambda)\rtimes\Lambda$,
	then $L^\infty(X)$ and $L^\infty([0,1]^\Lambda)$ are unitarily conjugate.
\end{thm}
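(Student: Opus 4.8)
The plan is to mimic the structure of the proof of Theorem~\ref{thm: semidirect product CMAP}: reduce to a uniqueness statement for Cartan subalgebras of the Bernoulli crossed product, transfer weak amenability across the given isomorphism, and then split according to whether the acting group $\Lambda$ of the Bernoulli action is properly proximal. To begin, I would normalize the base of the Bernoulli action: every atomless standard probability space is isomorphic to $(\T,\mathrm{Haar})$, so $\Lambda\actson([0,1],\mathrm{Leb})^\Lambda$ is conjugate to $\Lambda\actson(\T,\mathrm{Haar})^\Lambda=\Lambda\actson\T^\Lambda$, which is a free ergodic p.m.p.\ algebraic action on the compact metrizable abelian group $\T^\Lambda$. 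Hence, writing $M:=L^\infty(X)\rtimes\Gamma$, we may identify $M\cong L^\infty(\T^\Lambda)\rtimes\Lambda=L\big((\bigoplus_\Lambda\Z)\rtimes\Lambda\big)$, the group von Neumann algebra of the wreath product $\Z\wr\Lambda$, and it is enough to prove that $M$ has a unique Cartan subalgebra up to unitary conjugacy, since then the two Cartan subalgebras $L^\infty(X)$ and $B:=L^\infty(\T^\Lambda)$ are unitarily conjugate. The hypothesis on $\Gamma$ is used only at this point: weak amenability of $\Gamma$ forces $M=L^\infty(X)\rtimes\Gamma$ to have the weak-$*$ completely bounded approximation property, and this property passes to $L\Lambda\subseteq M$ because $L\Lambda$ is the range of the trace-preserving conditional expectation; therefore $\Lambda$ is weakly amenable as well.

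Now split on proper proximality of $\Lambda$. If $\Lambda$ is properly proximal, then it is nonamenable and, by the previous step, weakly amenable, so \cite{BoIoPe21} applies directly to the free ergodic action $\Lambda\actson[0,1]^\Lambda$ and yields that $M$ has a unique Cartan subalgebra up to unitary conjugacy; this case is complete. If $\Lambda$ is not properly proximal, I would instead exploit the Bernoulli structure. Pass to $\tilde M:=L^\infty((\T\times\T)^\Lambda)\rtimes\Lambda\supseteq M$ equipped with Popa's $s$-malleable deformation $(\theta_t)$, which fixes $L\Lambda$ pointwise and moves $B$. Given an arbitrary Cartan subalgebra $A\subseteq M$, the action of $\mathcal{N}_M(A)$ on $A$ is weakly compact (because $A$ is amenable and $M$ has the weak-$*$ CBAP), and combining weak compactness with $(\theta_t)$ along the lines of Ozawa--Popa should produce the intertwining dichotomy $A\prec_M B$ or $A\prec_M L\Lambda$. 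In the first alternative, two Cartan subalgebras, one of which intertwines into the other, are automatically unitarily conjugate, so $A\sim_u B$ and we are done.

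The second alternative, $A\prec_M L\Lambda$ with $\Lambda$ not properly proximal, is where I expect the real work to lie: this is precisely the configuration whose exclusion in \cite{PoVa14II,BoIoPe21} relies on biexactness, respectively proper proximality, so the point is to rule it out by other means. The idea is that the Bernoulli action $\Lambda\actson\T^\Lambda$ is mixing, hence $L\Lambda\subseteq M$ is a mixing inclusion; an intertwining $A\prec_M L\Lambda$ of the regular subalgebra $A$ should then propagate, via the mixing property together with $\mathcal{N}_M(A)''=M$, to $M\prec_M L\Lambda$, which is absurd because $L^2(M)$ is not finitely generated over $L\Lambda$ (equivalently, because the Bernoulli base is nontrivial). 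This forces $A\prec_M B$, hence $A\sim_u B$, completing the proof. The main obstacles are thus twofold: establishing the weak-compactness--deformation dichotomy rigorously for a general Cartan subalgebra, with the usual care needed when passing to corners; and closing the case $A\prec_M L\Lambda$ — either through the mixing argument just sketched, or, alternatively, by invoking the existing rigidity theory for group-measure-space decompositions of the semidirect product $(\bigoplus_\Lambda\Z)\rtimes\Lambda$.
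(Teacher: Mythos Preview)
Your case split is on the wrong group. The paper splits on whether $\Gamma$ (the weakly amenable side with the arbitrary action) is properly proximal, not $\Lambda$. When $\Gamma$ is properly proximal, \cite[Theorem~1.5]{BoIoPe21} gives unique Cartan for $L^\infty(X)\rtimes\Gamma$ directly; when $\Gamma$ is not properly proximal, \cite[Theorem~1.3]{Din22} shows that $\Gamma\actson X$ and $\Lambda\actson[0,1]^\Lambda$ are actually conjugate actions, whence $L^\infty(X)$ and $L^\infty([0,1]^\Lambda)$ are unitarily conjugate. That is the entire proof.

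Your route through $\Lambda$ works in the properly proximal case (the transfer of weak amenability to $\Lambda$ via the conditional expectation is fine, and then \cite{BoIoPe21} applies), but the non-properly-proximal case has a genuine gap. The dichotomy ``$A\prec_M B$ or $A\prec_M L\Lambda$'' that you attribute to weak compactness plus the malleable deformation is not what those techniques produce. The Ozawa--Popa/Popa--Vaes machinery yields a dichotomy of the form ``$A\prec_M L\Lambda$ or $\mathcal N_M(A)''$ is amenable relative to $L\Lambda$'', and for a Bernoulli crossed product the second alternative is \emph{automatically} satisfied (since $B$ is abelian, $M=B\rtimes\Lambda$ is always amenable relative to $L\Lambda$), so nothing is gained. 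Getting $A\prec_M B$ from weak compactness alone would settle Popa's conjecture for all weakly amenable $\Lambda$; notice that your argument in that branch never actually uses the hypothesis that $\Lambda$ is not properly proximal. The mixing step you sketch for excluding $A\prec_M L\Lambda$ is essentially correct in spirit (one does get $pMp\prec_M L\Lambda$ from regularity plus mixing, and $M\not\prec_M L\Lambda$ because $B$ is diffuse), but it is moot without the dichotomy.

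The fix is simply to run the case split on $\Gamma$ and invoke \cite{Din22} in the non-properly-proximal branch.
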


Indeed, when $\Gamma$ is properly proximal, this is a special case of \cite[Theorem 1.5]{BoIoPe21}.
When $\Gamma$ is non properly proximal, \cite[Theorem 1.3]{Din22}
	shows that $\Gamma\actson X$ and $\Lambda\actson [0,1]^\Lambda$ are actually conjugate.
In both cases, we conclude that $L^\infty(X)$ and $L^\infty([0,1]^\Lambda)$ are unitarily conjugate.


Turning back to Theorem~\ref{thm: recover Bernoulli}, we remark that its proof follows the strategy of \cite{IoPoVa13} closely
	and exploits the rigidity of non properly proximal groups in the setting of Bernoulli actions by building upon results in \cite{Din22}.
We also note that Theorem~\ref{thm: recover Bernoulli} holds when $\Gamma$ has Property (T) or is a product group as well by \cite{IoPoVa13}.
	
Some concrete examples beyond \cite{IoPoVa13} are covered by Theorem~\ref{thm: recover Bernoulli}. 
Groups considered in \cite[Example 0.9]{TD20} 
	are of the form $(\bigoplus_S \Z_2)\rtimes\F_n$, where $n\geq 2$ 
	and $S$ is an infinite set on which $\F_n$ acts on amenably. 
They are inner amenable \cite{TD20} and thus non properly proximal \cite{BoIoPe21},
	and $\Lambda_{\rm cb}((\bigoplus_S \Z_2)\rtimes\F_n)=\Lambda_{\rm cb}(\F_n)=1$
	by \cite[Proposition 3.2, Corollary 3.3]{OzPo10I}.
However, \cite{IoPoVa13} does not apply to these since the centralizer of any infinite subgroup of these groups is amenable \cite{TD20}.

Before outlining the proof, let us make some remarks on the assumptions of Theorem~\ref{thm: semidirect product CMAP}.
In addition to assuming weak amenability of $\Gamma$,
	we also assume $\Lambda_{\rm cb}(\Gamma)=1$ and $\Lambda\actson Y$ is an algebraic action.
The action $\Lambda\actson Y$ being algebraic allows us to realize $L^\infty(Y)\rtimes\Lambda$ as a group von Neumann algebra $L(\widehat Y\rtimes\Lambda)$,
	for which we may use the comultiplication map arising from a group instead of a group measure space \cite{PoVa10a}.
Using this comultiplication map enables us to analyze the relation between proper proximality and the comultiplication map in Section~\ref{sec: comultiplication}.
The reason to assume $\Lambda_{\rm cb}(\Gamma)=1$ is present in Section~\ref{sec: dichotomy}.
Roughly speaking, in the relative setting, the complete metric approximation property plays a role that is similar to local reflexivity.
This assumption is also present in \cite[Proposition 7.3]{Iso20} to avoid the same technical difficulty.

Let us finish the introduction by outlining the proof of Theorem~\ref{thm: recover Bernoulli} informally.
It is built on the recently developed notions of proper proximality and biexactness for von Neumann algebras \cite{DKEP22, DP22},
	and Popa's deformation/rigidity theory, in particular, the breakthrough work \cite{Ioa11, IoPoVa13}.
In view of \cite{IoPoVa13}, to prove Theorem~\ref{thm: recover Bernoulli}, 
	it suffices to show $\Delta(L\Gamma)$ can be unitarily conjugate into $L\Gamma\ovt L\Gamma$,
	where $\Delta: M:=L\Lambda\to L\Lambda\ovt L\Lambda$ 
	is the comultiplication map first considered in \cite{PoVa10a}.
First, we use the fact that $M\ovt M$ 
	is biexact relative to 
	$\{M\ovt L\Gamma, L\Gamma\ovt M\}$ \cite{BrOz08, DP22} 
	to conclude that $\Delta(L\Gamma)\subset M\ovt M$ is properly proximal relative to 
	$\{M\ovt L\Gamma, L\Gamma\ovt M\}$ \cite{DP22}.
In Proposition~\ref{prop: non prop prox to rel amen}, 
	we play this, together with malnormality of $\Gamma<\Z\wr\Gamma$,
	against the non properly proximality of $\Gamma$
	to conclude $\Delta(L\Gamma)$ can be unitarily conjugate into $M\ovt L\Gamma$.
Next, although $M\ovt L\Gamma$ has no relative biexactness,
	using the machinery from \cite{DP22}, we may still obtain certain proper proximality for 
	$\Delta(L\Gamma)\subset M\ovt L\Gamma$ by Proposition~\ref{prop: dichotomy}.
This allows us to exploit the non proper proximality of $\Gamma$ again in 
	Proposition~\ref{prop: non prop prox to rel amen} 
	and conclude that $\Delta(L\Gamma)$ may be unitarily conjugate into $L\Gamma\ovt L\Gamma$.

\textbf{Acknowledgment.}
I would like to thank Daniel Drimbe, Adrian Ioana, Jesse Peterson, Sorin Popa and Stefaan Vaes  for their useful comments.
I am also very grateful to  Jesse Peterson and Sorin Popa for their encouragement.

\section{Preliminaries}

\subsection{The small-at-infinity boundary and boundary pieces}\label{sec: boundary pieces}

In this section we recall the notion of the small-at-infinity boundary for von Neumann algebras developed in \cite{DKEP22, DP22}, 
	which is a noncommutative analogue of the corresponding boundary for groups
	introduced by Ozawa \cite{Oz04, BrOz08}.

Let $M$ be a finite von Neumann algebra.
An $M$-boundary piece $\X$ is a hereditary ${\rm C}^*$-subalgebra $\X\subset\B(L^2M)$
	such that $M\cap M(\X)\subset M$ and $JMJ\cap M(\X)\subset JMJ$ are weakly dense,
	and $\X\neq \{0\}$,
	where $M(\X)$ denotes the multiplier algebra of $\X$.
For convenience, we will always assume $\X\neq \{0\}$.
Given an $M$-boundary piece $\X$, define $\K_\X^L(M)\subset \B(L^2M)$ to be the $\|\cdot\|_{\infty,2}$ closure of $\B(L^2M)\X$,
	where $\|T\|_{\infty,2}=\sup_{\hat a\in (M)_1}\|T\hat a\|$
		and $(M)_1=\{a\in M\mid \|a\|\leq 1\}$.
Set $\K_\X(M)=\K_\X^L(M)^*\cap \K_\X^L(M)$,
	then $\K_\X(M)$ is a ${\rm C}^*$-subalgebra that contains $M$ and $JMJ$ in its multiplier algebra.
Put $\K^{\infty,1}_\X(M)=\overline{\K_\X(M)}^{_{\|\cdot\|_{\infty,1}}}\subset \B(L^2M)$, 
	where $\|T\|_{\infty,1}=\sup_{a,b\in (M)_1}\langle T\hat a, \hat b\rangle$,
	and the small-at-infinity boundary for $M$ relative to $\X$ is given by 
$$
\bS_\X(M)=\{T\in\B(L^2M)\mid [T,x]\in \K_\X^{\infty,1}(M),{\rm\ for\ any\ }x\in M'\}.
$$
When $\X=\K(L^2M)$, we omit $\X$ in the above notations.

The following instance of boundary pieces is extensively used in this paper.
Let $M$ be a finite von Neumann algebra and $\{P_i\}_{i=1}^n$ a family of von Neumann subalgebras.
Recall from \cite{DKEP22}
	that the $M$-boundary piece $\X$ associated with $\{P_i\}_{i=1}^n$ is the
	hereditary ${\rm C}^*$-subalgebra of $\B(L^2M)$ 
	generated by $\{x JyJ e_{P_i}\mid i=1,\dots, n, x,y\in M\}$.
If $[e_{P_i}, e_{P_j}]=0$ for $i,j=1,\dots, n$, we have $\X$ coincides with the hereditary ${\rm C}^*$-subalgebra generated by $\{x JyJ (\vee_{i=1}^n e_{P_i})\mid x,y\in M\}$ \cite[Lemma 3.2]{DKE22}.
When the family only contains one von Neumann subalgebra $P\subset M$,
	we usually denote the $M$-boundary piece by $\X_P$.


\subsection{Biexactness and proper proximality}

With the notion of the small-at-infinity boundary in hand, 
	we recall proper proximality and biexactness for von Neumann algebras,
	which were introduced in \cite{DKEP22} and \cite{DP22}, respectively.
These are generalizations of their corresponding notions for groups introduced in \cite{BoIoPe21} and \cite{Oz04, BrOz08}, respectively.

Let $M$ be a von Neumann algebra and $\X$ an $M$-boundary piece.
Given a von Neumann subalgebra $N\subset pMp$ with a nonzero projection $p\in M$,
	we say $N$ is not properly proximal relative to $\X$ in $M$
	if there exists an $N$-central state $\varphi: p\bS_\X(M)p\to \C$ such that $\varphi_{\mid pMp}$ is normal.
Equivalently, $N$ is not properly proximal relative to $\X$ in $M$ if there exists some nonzero projection $z\in \cZ(N)$ and an $Nz$-bimodular u.c.p.\ map
	$\phi: z\bS_\X(M)z\to Nz$ such that $\phi_{\mid zMz}$ is normal.
When $\X$ is the $M$-boundary piece associated with 
	a family of von Neumann subalgebras $\{P_i\}_{i=1}^n$ of $M$,
	we say $N$ is not properly proximal relative to $\{P_i\}_{i=1}^n$ in $M$.

We say $M$ is biexact relative to $\X$ if there exist nets of u.c.p.\ maps 
$\phi_i: M\to \M_{n(i)}(\C)$ and $\psi_i: \M_{n(i)}(\C)\to \bS_\X(M)$
such that $\psi_i\circ \phi_i(x)\to x$ in the $M$-topology of $\bS_\X(M)$.
Here, by an equivalent characterization \cite[Lemma 3.4]{DP22}, 
	we say a net $\{x_i\}\subset \bS_\X(M)$ converging to $0$ in the $M$-topology if
	there exists a net of projections $p_i\in M$ such that $p_i\to 1$ strongly
	and $\|p_ix_ip_i\|\to 0$.
When $\X$ is the $M$-boundary piece associated with 
	a family of von Neumann subalgebras $\{P_i\}_{i=1}^n$ of $M$,
	we say $M$ is biexact relative to $\{P_i\}_{i=1}^n$.

These notions coincide with the corresponding notions of groups if we consider group von Neumann algebras:
a discrete group $\Gamma$ is properly proximal (resp. biexact) relative to a family of subgroups $\{\Lambda_i\}_{i=1}^n$ if and only if
$L\Gamma$ is properly proximal (resp. biexact) relative to $\{L\Lambda_i\}_{i=1}^n$
\cite{DKEP22, DP22}.

\subsection{Normal biduals}
A bidual characterization of proper proximality will be crucial to our arguments. 
In this section, we briefly recall necessary notions around normal biduals from \cite{DKEP22, DP22}.
	
Let $M$ be a finite von Neumann algebra and $A\subset \B(L^2M)$ a ${\rm C}^*$-subalgebra
	such that $M$ and $JMJ$ are contained in its multiplier algebra.
Denote by $A^\sharp_J$ the set of functionals $\varphi\in A^*$ such that for any $T\in A$,
$$M\times M\ni (a,b)\mapsto \varphi(aTb)\in \C,\ JMJ\times JMJ\ni(a,b)\mapsto \varphi(aTb)\in \C$$
are both separately normal.

The normal bidual of $A$, denoted by $A^{\sharp *}_J$, may be identified with a corner of $A^{**}$ 
	and be viewed as a von Neumann algebra.
Denote by $p_{\rm nor}\in M(A)^{**}$ the support projection of both identity representations of $M$ and $JMJ$. 
Equivalently, $p_{\rm nor}$ is the support projection of states in $M(A)^*$ that are normal when restricted to $M$ and $JMJ$.
Then $M(A) ^{\sharp *}_J$ may be identified with $p_{\rm nor} M(A)^{**} p_{\rm nor}$
	and $A ^{\sharp *}_J=q_A p_{\rm nor} M(A)^{**} p_{\rm nor}$,
	where $q_A\in M(A)^{**}$ is the identity of $A^{**}$ in $M(A)^{**}$.

Throughout the paper, we reserve the notation $p_{\rm nor}$ for the above projection and 
	set $\iota_{\rm nor}: \B(L^2M)\ni T\mapsto p_{\rm nor} \pi_u(T) p_{\rm nor}\in  \B(L^2M) ^{\sharp *}_J$,
	where $\pi_u: \B(L^2M)\to \B(L^2M)^{**}$ is the universal representation.
Notice that $\iota_{\rm nor}$ is no longer a $*$-homomorphism,
	but $\iota_{\rm nor}$ restricts to normal representations on $M$ and $JMJ$.

We consider the following bidual version of the small-at-infinity boundary for a von Neumann algebra $M$:
$$\tilde \bS(M)=\mid \{T\in \B(L^2M) ^{\sharp *}_J\mid [T, x]\in \K(M) ^{\sharp *}_J,{\ \rm for\ all\ }x\in JMJ\},$$
where we view $JMJ$ as in $\B(L^2M) ^{\sharp *}_J$ through the representation $\iota_{\rm nor}$.
	
By \cite[Lemma 8.5]{DKEP22}, we have that if a countable discrete group $\Gamma$ is non properly proximal, then there exists a $L\Gamma$-central state $\varphi:\tilde \bS(L\Gamma)\to \C$
	such that $\varphi_{\mid L\Gamma}=\tau$.

Next we collect a few lemmas for Section~\ref{sec: concentration of state}.

\begin{lem}\label{lem: identity}e
Let $\Gamma$ be a countable discrete group with a family of subgroups $\{\Sigma_i\}_{i=1}^n$.
Denote by $M=L\Gamma$, 
	$\Y$ the $M$-boundary piece associated with $\{L\Sigma_i\}_{i=1}^n$,
	$P_F$ the orthogonal projection from $\ell^2\Gamma$ onto
	$\overline{{\rm sp}\{\delta_g\mid g\in F(\cup_{i=1}^n \Sigma_i) F\}}$
	for a finite subset $F\subset \Gamma$,
	and $q_\Y$ the unit in $(\K_\Y(M)^\sharp_J)^*$.
Then $q_\Y=\lim_F \iota_{\rm nor} (P_F)$,
	where the limit is over finite subsets of $\Gamma $ 
	and is taken in $(\B(L^2M)^\sharp_J)^*$ with the weak$^*$ topology.
\end{lem}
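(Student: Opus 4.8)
The plan is to exhibit the $P_F$ as an increasing net of projections inside the hereditary $C^*$-algebra $\Y$, and to prove that $\iota_{\rm nor}(P_F)$ increases weak$^*$ to the identity of the normal bidual of $\K_\Y(M)$. First I would identify $L^2M$ with $\ell^2\Gamma$ and its canonical orthonormal basis $\{\delta_g\}_{g\in\Gamma}$, so that $e_{L\Sigma_i}$ is the diagonal projection onto $\overline{{\rm sp}}\{\delta_s\mid s\in\Sigma_i\}$. For $a,b\in\Gamma$, the operator $u_aJu_{b^{-1}}Je_{L\Sigma_i}$ — one of the generators $xJyJe_{L\Sigma_i}$ of $\Y$ — is a partial isometry of the form $We_{L\Sigma_i}$ with $W=u_aJu_{b^{-1}}J$ unitary and $W\delta_s=\delta_{asb}$, so its final projection $q_{a,b,i}:=(u_aJu_{b^{-1}}Je_{L\Sigma_i})(u_aJu_{b^{-1}}Je_{L\Sigma_i})^*$ is the diagonal projection onto $\overline{{\rm sp}}\{\delta_g\mid g\in a\Sigma_i b\}$; since $\Y$ is a $C^*$-algebra containing its generators, $q_{a,b,i}\in\Y$. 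Writing $\Sigma=\bigcup_i\Sigma_i$, the range of $P_F$ is the closed span of $\bigcup_{a,b\in F,\,1\le i\le n}\mathrm{ran}(q_{a,b,i})$, so $P_F$ is the range projection of $S_F:=\sum_{a,b\in F,\,1\le i\le n}q_{a,b,i}\in\Y$. But $S_F$ is diagonal with entries in $\{0,1,\dots,n|F|^2\}$, the entries being positive exactly on $F\Sigma F$; hence $0$ is isolated in ${\rm spec}(S_F)$ and $P_F=g(S_F)$ for any continuous $g\colon[0,\infty)\to[0,1]$ with $g(0)=0$ and $g\equiv 1$ on $[1,\infty)$. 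Since $g(0)=0$, this gives $P_F\in\Y\subseteq\K_\Y(M)$, and $F\subseteq F'$ clearly forces $P_F\le P_{F'}$.

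Next, since $\iota_{\rm nor}$ is order preserving on self-adjoint elements, $P_F\in\K_\Y(M)$, and $\pi_u(P_F)\le q_{\K_\Y(M)}$ (the identity of $\K_\Y(M)^{**}$), the net $\iota_{\rm nor}(P_F)=p_{\rm nor}\pi_u(P_F)p_{\rm nor}$ is increasing and dominated by $p_{\rm nor}q_{\K_\Y(M)}p_{\rm nor}=q_\Y$; hence it converges weak$^*$ in $\B(L^2M)^{\sharp *}_J$ to a projection $q_\infty=p_{\rm nor}\big(\sup_F\pi_u(P_F)\big)p_{\rm nor}\le q_\Y$. It remains to prove $q_\Y\le q_\infty$. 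Because the states on $\B(L^2M)$ that are normal on $M$ and on $JMJ$ have support projection $p_{\rm nor}$, and $q_\Y-q_\infty\ge 0$ lies under $p_{\rm nor}$, it suffices to show $\psi(q_\Y)\le\psi(q_\infty)$ for every such state $\psi$ (then $q_\Y-q_\infty$ is annihilated by all of them, hence is $0$).

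Fix such a $\psi$ and a positive contraction $x\in\K_\Y(M)$. Here I would invoke the description of $\K_\Y(M)$ as a relative compact operator space from \cite{DKEP22,DP22}: every $x\in\K_\Y(M)$ satisfies $\|x-P_FxP_F\|_{\infty,2}\to 0$ (this is where it matters that $\K_\Y(M)$, unlike $\K_\Y^L(M)$, consists of operators that are small at infinity relative to $\Sigma$ on both sides). Each $P_FxP_F$ is a positive contraction with $P_FxP_F\le P_F$, and $\psi$ is $\|\cdot\|_{\infty,2}$-continuous on bounded sets — again by \cite{DKEP22,DP22}, this being exactly the feature of $p_{\rm nor}$-supported functionals — so
$$\psi(x)=\lim_F\psi(P_FxP_F)\le\lim_F\psi(P_F).$$
Taking the supremum over $x$ and using $\psi(q_{\K_\Y(M)})=\|\psi\vert_{\K_\Y(M)}\|=\sup_x\psi(x)$ for a state, we get $\psi(q_{\K_\Y(M)})\le\lim_F\psi(P_F)=\psi\big(\sup_F\pi_u(P_F)\big)$. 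Since $\psi$ is supported under $p_{\rm nor}$, the left side equals $\psi(q_\Y)$ and the right side equals $\psi(q_\infty)$, as required. Therefore $q_\Y=q_\infty=\lim_F\iota_{\rm nor}(P_F)$.

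The main obstacle is this last step — passing from the concrete net $\{P_F\}$, which lives in the small hereditary algebra $\Y$, to the abstract $q_\Y$, the identity of the normal bidual of the strictly larger algebra $\K_\Y(M)$ (a kind of $\|\cdot\|_{\infty,2}$-completion of $\Y$). It genuinely forces the two structural facts quoted above from \cite{DKEP22,DP22}: that elements of $\K_\Y(M)$ are $\|\cdot\|_{\infty,2}$-approximated by their compressions $P_F(\cdot)P_F$, and that functionals normal on $M$ and on $JMJ$ are $\|\cdot\|_{\infty,2}$-continuous on bounded subsets of $\B(L^2M)$. Everything else — the identification of $P_F$ as $g(S_F)\in\Y$ and the elementary passage from the state inequality to the operator equality via the support projection $p_{\rm nor}$ — is routine.
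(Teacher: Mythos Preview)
Your argument that $P_F\in\Y$ via functional calculus on the finite sum $S_F$ is correct and nicely explicit, and the inequality $q_\infty\le q_\Y$ is fine. The gap is in the reverse inequality, where both ``structural facts'' you invoke are problematic.

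For Claim~A ($\|x-P_FxP_F\|_{\infty,2}\to 0$ for $x\in\K_\Y(M)$): the norm $\|\cdot\|_{\infty,2}$ is not stable under right multiplication by $P_F$. Indeed $\|TP_F\|_{\infty,2}=\sup_{a\in(M)_1}\|TP_F\hat a\|$, and $P_F\hat a$ is in general \emph{not} of the form $\hat b$ with $\|b\|\le C$ (truncating Fourier coefficients to $F\Sigma F$ is not a bounded map on $M$). So the natural $3\varepsilon$-argument---approximate $x$ by $Ty$ with $y\in\Y$, use $\|y-yP_F\|\to 0$, and control $\|(x-Ty)P_F\|_{\infty,2}$---fails at the last step. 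I do not believe this convergence is stated in \cite{DKEP22,DP22}; what is there is the \emph{definition} $\K_\Y^L(M)=\overline{\B(L^2M)\Y}^{\|\cdot\|_{\infty,2}}$, which is a different statement. Claim~B (that functionals in $\B(L^2M)^\sharp_J$ are $\|\cdot\|_{\infty,2}$-continuous on bounded sets) is likewise not a consequence of separate normality on $M$ and $JMJ$, and I do not see it in those references either.

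The paper avoids this entirely. It observes that $\{P_F\}$ is an approximate unit for the hereditary ${\rm C}^*$-algebra $\Y_0$ generated by $\{xJyJ(\vee_i e_{L\Sigma_i}): x,y\in C^*_r\Gamma\}$, and then cites \cite[Lemma~3.7]{DKE22}: $\iota_{\rm nor}(\Y_0)$ is weak$^*$ dense in $(\K_\Y(M)^\sharp_J)^*$. Since $\iota_{\rm nor}$ is multiplicative on $\Y_0$ (the generators commute with $p_{\rm nor}$), the image of an approximate unit converges weak$^*$ to the identity of the target von Neumann algebra, i.e.\ to $q_\Y$. This density lemma is exactly the bridge between $\Y_0$ and the much larger $\K_\Y(M)$ that your Claims~A and~B were trying to build by hand; it is the piece you are missing.
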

\begin{proof}
Denote by $\Y_0$ the hereditary ${\rm C}^*$-subalgebra generated by 
	$\{xJyJ(\vee_{i=1}^n e_{L\Sigma_i})\mid x,y\in C^*_r\Gamma\}$.
One checks $\{P_F\mid F\subset \Gamma{\rm \ finite}\}$ forms an approximate unit for $\Y_0$.
Since $\iota_{\rm nor}(\Y_0)\subset (\K_\Y(M)^\sharp_J)^*$ is weak$^*$ dense \cite[Lemma 3.7]{DKE22},
	one has $\lim_F\iota_{\rm nor}(P_F)=q_\Y$.
\end{proof}

\begin{lem}\label{lem: commuting projections}
Let $\Gamma$ be a countable discrete group with a subgroup $\Lambda<\Gamma$ and a family of subgroups $\{\Sigma_i\}_{i=1}^n$.
Denote by $M=L\Gamma$ and $\Y$ the $M$-boundary piece associated with $\{L\Sigma_i\}_{i=1}^n$.
Set $q_\Y$ be the unit in $(\K_\Y(M)^\sharp_J)^*$ and $P$ the orthogonal projection from $\ell^2\Gamma$ to $\overline{{\rm sp}\{\delta_t\mid t\in \cup_{j=1}^d t_j \Lambda s_j\}}$ for a some $t_j, s_j\in \Gamma$.
Then $[q_\Y, \iota_{\rm nor}(P)]=0$.
\end{lem}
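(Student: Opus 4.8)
The plan is to reduce the statement to the single claim that $P$ lies in the multiplier algebra $M(\K_\Y(M))$. Granting this, the conclusion is purely formal. Recall that $q_\Y=q_{\K_\Y(M)}\,p_{\rm nor}$ is the unit of $\K_\Y(M)^{\sharp *}_J=q_{\K_\Y(M)}\,p_{\rm nor}\,M(\K_\Y(M))^{**}\,p_{\rm nor}$, where $q_{\K_\Y(M)}\in M(\K_\Y(M))^{**}$ is the identity of the ideal $\K_\Y(M)^{**}$ and is hence a central projection in $M(\K_\Y(M))^{**}$. If $P\in M(\K_\Y(M))$ then $\pi_u(P)\in M(\K_\Y(M))^{**}$ commutes with $q_{\K_\Y(M)}$, and since $p_{\rm nor}\in M(\K_\Y(M))^{**}$ commutes with $q_{\K_\Y(M)}$ too, I would simply compute
$$q_\Y\,\iota_{\rm nor}(P)=q_{\K_\Y(M)}\,p_{\rm nor}\,\pi_u(P)\,p_{\rm nor}=\iota_{\rm nor}(P)\,q_\Y,$$
which is the asserted identity.

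To obtain $P\in M(\K_\Y(M))$, I would write $P$ as a finite join of translated Jones projections. For each $j$, let $u_j\in\B(L^2M)$ be the unitary with $u_j\delta_g=\delta_{t_j g s_j}$; it is a product of a unitary from $M$ (left translation by $t_j$) and one from $JMJ$ (right translation by $s_j$), so $u_j\in M(\K_\Y(M))$ since $M,JMJ\subseteq M(\K_\Y(M))$. Then $e_j:=u_j\,e_{L\Lambda}\,u_j^*$ is the orthogonal projection onto $\overline{{\rm sp}\{\delta_t\mid t\in t_j\Lambda s_j\}}$. Next, $e_{L\Lambda}\in M(\K_\Y(M))$: this is a general fact about Jones projections, which here can be checked by noting that $\K_\Y^L(M)$ is a $\|\cdot\|_{\infty,2}$-closed left ideal of $\B(L^2M)$, that right multiplication by $e_{L\Lambda}$ is $\|\cdot\|_{\infty,2}$-contractive (because $e_{L\Lambda}\hat a=\widehat{E_{L\Lambda}(a)}$ with $\|E_{L\Lambda}(a)\|\leq\|a\|$), and that $\Y\,e_{L\Lambda}\subseteq\overline{\B(L^2M)\,\Y}$, using on generators $(xJyJe_{L\Sigma_i})\,e_{L\Lambda}=xJyJ\,e_{L(\Sigma_i\cap\Lambda)}$ together with $e_{L(\Sigma_i\cap\Lambda)}\leq e_{L\Sigma_i}\in\Y$ and the heredity of $\Y$. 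Therefore each $e_j\in M(\K_\Y(M))$. Finally, identifying $L^2M$ with $\ell^2\Gamma$ and its standard basis $\{\delta_g\}_{g\in\Gamma}$, each $e_j$ is the multiplication operator by the indicator function of $t_j\Lambda s_j$, so the projections $e_1,\dots,e_d$ pairwise commute, and hence
$$P=\bigvee_{j=1}^{d}e_j=1-\prod_{j=1}^{d}(1-e_j)\in C^*(e_1,\dots,e_d)\subseteq M(\K_\Y(M)).$$

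The main obstacle is the membership $P\in M(\K_\Y(M))$: on the one hand that the Jones projection $e_{L\Lambda}$ of the a priori unrelated subgroup $\Lambda$ is a multiplier of $\K_\Y(M)$, and on the other hand that the finite join of the commuting translates $e_1,\dots,e_d$ stays inside the ${\rm C}^*$-algebra $M(\K_\Y(M))$ --- the latter relying on the $e_j$ being diagonal, hence mutually commuting, operators, since a join of non-commuting projections of a ${\rm C}^*$-algebra need not lie in it. Once this is settled, the commutation $[q_\Y,\iota_{\rm nor}(P)]=0$ is exactly the short bidual computation of the first paragraph, using only the centrality of $q_{\K_\Y(M)}$ in $M(\K_\Y(M))^{**}$; alternatively one could combine the membership with Lemma \ref{lem: identity} and the weak${}^*$-continuity of multiplication in $\B(L^2M)^{\sharp *}_J$, though this is not necessary.
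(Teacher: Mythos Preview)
Your approach is genuinely different from the paper's, and the reduction to $P\in M(\K_\Y(M))$ is an attractive idea, but the short bidual computation in your first paragraph hides a real gap.

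The paper never touches $M(\K_\Y(M))$. Instead it shows, citing \cite[Lemma~3.4]{Din22}, that for any subgroup $\Gamma_0$ the projection $\pi_u(P_{\Gamma_0})$ commutes with $p_{\rm nor}$; conjugating by $\lambda_t,\rho_s$ and taking finite joins of commuting diagonal projections, both $P$ and the $P_F$ of Lemma~\ref{lem: identity} lie in the \emph{multiplicative domain} of $\iota_{\rm nor}$. Since $[P,P_F]=0$ in $\B(\ell^2\Gamma)$, one gets $[\iota_{\rm nor}(P),\iota_{\rm nor}(P_F)]=0$, and Lemma~\ref{lem: identity} passes to the limit $q_\Y=\lim_F\iota_{\rm nor}(P_F)$. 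The whole argument lives inside $\B(L^2M)^{\sharp *}_J$.

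Your computation, by contrast, needs the identity $q_\Y=q_{\K_\Y(M)}\,p_{\rm nor}$ and the commutation $[q_{\K_\Y(M)},p_{\rm nor}]=0$, and you justify these by saying ``$p_{\rm nor}\in M(\K_\Y(M))^{**}$ and $q_{\K_\Y(M)}$ is central there''. But in the paper's conventions $p_{\rm nor}$ and $\iota_{\rm nor}$ are defined in $\B(L^2M)^{**}$, where $q_{\K_\Y(M)}$ is \emph{not} central. That $p_{\rm nor}\in\B(L^2M)^{**}$ actually lies in the image of $M(\K_\Y(M))^{**}$ --- equivalently, that the two ``support of binormal states'' projections match up under the embedding $M(\K_\Y(M))^{**}\hookrightarrow\B(L^2M)^{**}$ --- is not obvious and you do not argue it. In practice, the most direct way to obtain $[q_{\K_\Y(M)},p_{\rm nor}]=0$ is to pick an approximate unit of $\K_\Y(M)$ consisting of diagonal projections $P_F$ and use that $[\pi_u(P_F),p_{\rm nor}]=0$; but this is exactly the content of \cite[Lemma~3.4]{Din22} plus Lemma~\ref{lem: identity}, i.e.\ the paper's proof. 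So your ``purely formal'' step is not actually independent of the paper's key input.

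A smaller point: your verification of $e_{L\Lambda}\in M(\K_\Y(M))$ ``on generators'' needs one more sentence. The elements $xJyJe_{L\Sigma_i}$ generate $\Y$ as a \emph{hereditary} ${\rm C}^*$-subalgebra, not as an algebra, so checking $\Y e_{L\Lambda}\subset\overline{\B(L^2M)\Y}$ on them alone is not sufficient. The clean fix is to pass to the associated closed left ideal $L=\overline{\sum_i\B(L^2M)e_{L\Sigma_i}}$: then $(\B e_{L\Sigma_i})e_{L\Lambda}=\B\,e_{L(\Sigma_i\cap\Lambda)}\subset L$ by heredity, so $L e_{L\Lambda}\subset L$, and since $\K_\Y^L(M)=\overline{L}^{\|\cdot\|_{\infty,2}}$ and right multiplication by $e_{L\Lambda}$ is $\|\cdot\|_{\infty,2}$-contractive, one gets $\K_\Y^L(M)e_{L\Lambda}\subset\K_\Y^L(M)$ as you want.
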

\begin{proof}
For a subset $S\subset \Gamma$, denote by $P_S$ the orthogonal projection onto $\overline{{\rm sp}\{\delta_t\mid t\in S\}}$.
Note that for any subgroup $\Gamma_0$, the projection $\pi_u(P_{\Gamma_0})$ commutes with $p_{\rm nor}$ \cite[Lemma 3.4]{Din22}.
Since $P_{t\Gamma_0 s}= \Ad(\lambda_t^*\rho_s)(P_{\Gamma_0})$, we have $\pi_u(P_{t\Gamma_0 s})$ commutes with $p_{\rm nor}$.
Moreover, for two commuting projections $Q_1, Q_2\in \B(L^2M)^{**}$ such that $[Q_1, p_{\rm nor}]=[Q_2, p_{\rm nor}]=0$, one has $Q_1\vee Q_2=Q_1+Q_2-Q_1Q_2$ commutes with $p_{\rm nor}$.

By Lemma~\ref{lem: identity} one has $q_\Y=\lim_F \iota_{\rm nor}(P_F)$, where $F\subset \Gamma$ is a finite set and $P_F\in \B(\ell^2\Gamma)$ is the orthogonal projection onto ${\rm sp}\{\delta_t\mid t\in F(\cup_{i=1}^n \Sigma_i)F\}$.
From the above discussion, we see that $P_F$ and $P$ are in the multiplicative domain of $\iota_{\rm nor}$.
Since $[P_F, P]=0$, we have $[\iota_{\rm nor}(P_F), \iota_{\rm nor}(P)]=0$
	and hence $[q_\Y, \iota_{\rm nor}(P)]=0$.
\end{proof}

Given a group $\Gamma$ with a family of subgroups $\cG$, recall that a set $S\subset \Gamma$ is small relative to $\cG$ if
	there exists some $n\in \N$, $\{s_i, t_i\}_{i=1}^n\subset \Gamma$ and $\{\Sigma_i\}_{i=1}^n\subset \cG$ such that
	$S\subset \bigcup_{i=1}^n s_i \Sigma_i t_i$, see e.g.\ \cite[Chapter 15]{BrOz08}.
We say a subgroup $\Lambda<\Gamma$ is almost malnormal relative to $\cG$ if $s\Lambda s^{-1}\cap \Lambda$ is small relative to $\cG$ for any $s\in \Gamma\setminus \Lambda$.
One checks that if $\cG$ only contains normal subgroups, then $\Lambda<\Gamma$ being almost malnormal relative to $\cG$ is equivalent to $s\Lambda t\cap \Lambda$ is small relative to $\cG$ for any $s,t\in \Gamma$ with at least one of $s, t$ is not in $\Lambda$.

\begin{lem}\label{lem: sum to identity}
Let $\Gamma$ be a countable discrete group with a subgroup $\Lambda<\Gamma$ and a family of subgroups $\{\Sigma_i\}_{i=1}^n$.
Set $M=L\Gamma$ and denote by $\X$ the $M$-boundary piece associated with $L\Lambda$,
	and by $\Y$ the $M$-boundary piece associated with $\{L\Sigma_i\}_{i=1}^n$.

Take $\{t_k\}_{k\in \N}\subset \Gamma$ a transversal for $\Gamma/\Lambda$
and put $p_{k,\ell}=P_{t_k \Lambda t_\ell}=\Ad(\lambda_{t_k} \rho_{t_\ell})(e_{L\Lambda})\in \B(L^2M)$,
where $P_{t_k \Lambda t_\ell}$ denotes the orthogonal projection from $\ell^2\Gamma$
	to $\overline{{\rm sp}\{\delta_g\mid g\in t_k \Lambda t_\ell\}}$.
	
Suppose $\Lambda$ is almost malnormal relative to $\{\Sigma_i\}_{i=1}^n$
	and each $\Sigma_i$ is normal.
Then $\{q_\Y^\perp \iota_{\rm nor}(p_{k,\ell})\}_{k,\ell\in \N}$ is a family of pairwise orthogonal projections
and $\sum_{k,\ell\in \N}q_\Y^\perp \iota_{\rm nor}(p_{k,\ell})=q_\Y^\perp q_\X$,
	where $q_\X$ and $q_\Y$ are identities of $(\K_\X(M)^\sharp_J)^*$ and $(\K_\Y(M)^\sharp_J)^*$, respectively.
\end{lem}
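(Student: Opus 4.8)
The plan is to check the two assertions separately: first that the projections $q_\Y^\perp \iota_{\rm nor}(p_{k,\ell})$ are pairwise orthogonal, then that they sum to $q_\Y^\perp q_\X$. For the orthogonality, I would start from the observation that each $p_{k,\ell} = P_{t_k\Lambda t_\ell}$ is the orthogonal projection onto $\overline{{\rm sp}\{\delta_g : g\in t_k\Lambda t_\ell\}}$, and these live in the multiplicative domain of $\iota_{\rm nor}$ (as in the proof of Lemma~\ref{lem: commuting projections}, since $p_{k,\ell} = \Ad(\lambda_{t_k}\rho_{t_\ell})(e_{L\Lambda})$ and $e_{L\Lambda}=P_\Lambda$ commutes with $p_{\rm nor}$ by \cite[Lemma 3.4]{Din22}). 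Since $\{t_k\}$ is a transversal for $\Gamma/\Lambda$, the sets $t_k\Lambda$ are pairwise disjoint, hence so are the $t_k\Lambda t_\ell$ as $(k,\ell)$ varies — wait, one must be careful: $t_k\Lambda t_\ell$ and $t_{k'}\Lambda t_{\ell'}$ need not be disjoint as sets when $\ell\neq\ell'$. This is precisely where almost malnormality enters: if $g\in t_k\Lambda t_\ell\cap t_{k'}\Lambda t_{\ell'}$ with $(k,\ell)\neq(k',\ell')$, then (assuming WLOG $k=k'$, so $\ell\neq\ell'$) one gets $\Lambda t_\ell\cap \Lambda t_{\ell'}\neq\emptyset$, i.e.\ $t_\ell t_{\ell'}^{-1}\in\Lambda$, contradicting that distinct transversal elements lie in distinct cosets; and if $k\neq k'$ similarly $t_k^{-1}t_{k'}\notin\Lambda$, and then $t_k\Lambda t_\ell\cap t_{k'}\Lambda t_{\ell'}$ is contained in $t_k(\Lambda t_\ell t_{\ell'}^{-1}(t_k^{-1}t_{k'})^{-1}\cap\ldots)$ — the cleaner route is: the intersection is either empty or a translate of $s\Lambda t\cap\Lambda$ for suitable $s,t$ with $s$ or $t$ outside $\Lambda$, which is small relative to $\{\Sigma_i\}$ by almost malnormality. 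Thus $P_{t_k\Lambda t_\ell}P_{t_{k'}\Lambda t_{\ell'}} = P_S$ where $S$ is small relative to $\{\Sigma_i\}$, so $\iota_{\rm nor}(P_S)\leq q_\Y$ by Lemma~\ref{lem: identity} (the projection onto $\overline{{\rm sp}\{\delta_g : g\in S\}}$ is dominated by some $P_F$ with $F(\cup\Sigma_i)F\supset S$, hence $\iota_{\rm nor}(P_S)\leq \iota_{\rm nor}(P_F)\leq q_\Y$). Multiplying on both sides by $q_\Y^\perp$ kills this, giving $q_\Y^\perp\iota_{\rm nor}(p_{k,\ell})\,\iota_{\rm nor}(p_{k',\ell'})q_\Y^\perp = 0$; using that $p_{k,\ell}, p_{k',\ell'}$ and the $P_F$'s pairwise commute (Lemma~\ref{lem: commuting projections} shows $[q_\Y,\iota_{\rm nor}(p_{k,\ell})]=0$), one rearranges to conclude the $q_\Y^\perp\iota_{\rm nor}(p_{k,\ell})$ are genuinely orthogonal.

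For the summation formula, I would use $\sum_{k}P_{t_k\Lambda} = 1$ on $\ell^2\Gamma$ (since $\{t_k\Lambda\}_k$ partitions $\Gamma$), and more generally for each fixed $\ell$, $\sum_k P_{t_k\Lambda t_\ell} = 1$. But summing over all $(k,\ell)$ overcounts, so instead I would identify what $\sum_{k,\ell}\iota_{\rm nor}(p_{k,\ell})$ computes modulo $q_\Y$. The key point: $\sum_k P_{t_k\Lambda t_\ell}$ is the identity, and $q_\X$ is, by Lemma~\ref{lem: identity} applied to the single subgroup $\Lambda$, the weak$^*$-limit of $\iota_{\rm nor}(P_F)$ where $P_F$ projects onto $\overline{{\rm sp}\{\delta_g: g\in F\Lambda F\}}$. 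Since $F\Lambda F$ is a finite union of double cosets $t_j\Lambda s_j$, one has $q_\X = \sup$ over such finite unions, and each $P_{t_j\Lambda s_j}$ can be rewritten: writing $s_j = t_{\ell(j)}\gamma_j$ with $\gamma_j\in\Lambda$ (using the transversal), $t_j\Lambda s_j = t_j\Lambda t_{\ell(j)}$, and $t_j\in t_{k(j)}\Lambda$ for a unique $k(j)$, so $t_j\Lambda s_j \subset \bigcup_{k}t_k\Lambda t_{\ell(j)}$ — actually $t_j\Lambda t_{\ell(j)} = t_{k(j)}\Lambda t_{\ell(j)} = p_{k(j),\ell(j)}$ exactly, since $t_j\Lambda = t_{k(j)}\Lambda$. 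So each generator of $q_\X$ is one of the $p_{k,\ell}$, whence $q_\X = \sup_{k,\ell}\iota_{\rm nor}(p_{k,\ell})$ in $(\B(L^2M)^\sharp_J)^*$. Now multiply by $q_\Y^\perp$: by the orthogonality just established, $q_\Y^\perp\,\sup_{k,\ell}\iota_{\rm nor}(p_{k,\ell})\,q_\Y^\perp = \sum_{k,\ell}q_\Y^\perp\iota_{\rm nor}(p_{k,\ell})$ (a supremum of commuting projections that become pairwise orthogonal after cutting by $q_\Y^\perp$ equals their sum), and this equals $q_\Y^\perp q_\X q_\Y^\perp = q_\Y^\perp q_\X$ since $[q_\Y, q_\X] = 0$ — which itself follows from $q_\X = \sup\iota_{\rm nor}(p_{k,\ell})$ and $[q_\Y,\iota_{\rm nor}(p_{k,\ell})]=0$, together with the fact (used already in Lemma~\ref{lem: commuting projections}) that suprema of $p_{\rm nor}$-commuting projections still commute with $p_{\rm nor}$.

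The main obstacle I anticipate is the bookkeeping around "small relative to $\{\Sigma_i\}$" in the orthogonality step — specifically, verifying cleanly that the intersection $t_k\Lambda t_\ell\cap t_{k'}\Lambda t_{\ell'}$, when nonempty and $(k,\ell)\neq(k',\ell')$, is a left-and-right translate of a set of the form $s\Lambda t\cap\Lambda$ with $s\notin\Lambda$ or $t\notin\Lambda$, so that the remark preceding the lemma (equating almost malnormality relative to a family of normal subgroups with smallness of all such $s\Lambda t\cap\Lambda$) applies and delivers $P_{t_k\Lambda t_\ell}P_{t_{k'}\Lambda t_{\ell'}}\leq q_\Y$. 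The normality of the $\Sigma_i$ is used exactly here, to pass between the one-sided and two-sided formulations of smallness and to absorb the conjugating transversal elements. Everything else is a matter of assembling Lemmas~\ref{lem: identity} and~\ref{lem: commuting projections} and the elementary fact that commuting projections which are orthogonal after a common cut have supremum equal to their sum after that cut.
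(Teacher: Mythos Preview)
Your proposal is correct and follows essentially the same route as the paper: for orthogonality you show $p_{k,\ell}p_{k',\ell'}\in\K_\Y(M)$ via almost malnormality (the paper writes this as $t_{k'}^{-1}t_k\Lambda t_\ell t_{\ell'}^{-1}\cap\Lambda$ being small relative to $\{\Sigma_i\}$), then use that $p_{k,\ell}$ lies in the multiplicative domain of $\iota_{\rm nor}$ together with $[q_\Y,\iota_{\rm nor}(p_{k,\ell})]=0$; for the summation you identify $q_\X$ as $\sup_{k,\ell}\iota_{\rm nor}(p_{k,\ell})$ (the paper phrases this as $\{\vee_{k,\ell\le n}p_{k,\ell}\}_n$ being an approximate unit for the hereditary ${\rm C}^*$-algebra defining $\X$) and then convert the supremum to a sum after cutting by $q_\Y^\perp$. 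The only wrinkle is your parenthetical claim that $k=k'$, $\ell\ne\ell'$ forces $t_\ell t_{\ell'}^{-1}\notin\Lambda$ ``since distinct transversal elements lie in distinct cosets'' --- that concerns right cosets while $\{t_k\}$ is a left transversal --- but you immediately abandon this for the correct two-sided argument, which is exactly what the paper does.
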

\begin{proof}
First note that $p_{k,\ell} p_{k',\ell'}=P_{t_k\Lambda t_\ell \cap t_{k'}\Lambda t_{\ell'}}\in \K_\Y(M)$ if $k\neq k'$ or $\ell\neq \ell'$,
	since $t_{k'}^{-1}t_k \Lambda t_\ell t_{\ell'}^{-1}\cap \Lambda$ is small relative to $\{\Sigma_i\}_{i=1}^n$.
As $\{p_{k,\ell}\}$ is in the multiplicative domain of $\iota_{\rm nor}$
and $[q_\Y, \iota_{\rm nor}(p_{k,\ell})]=0$ by Lemma~\ref{lem: commuting projections},
we have $q_\Y^\perp \iota_{\rm nor}(p_{k,\ell}) q_\Y^\perp \iota_{\rm nor}(p_{k',\ell'})=\delta_{k,k'}\delta_{\ell, \ell'} q_\Y^\perp \iota_{\rm nor}(p_{k,\ell})$.

To see $\sum_{k,\ell\in \N}q_\Y^\perp \iota_{\rm nor}(p_{k,\ell})=q_\Y^\perp q_\X$,
	we first notice that $\{p_n:=\vee_{k,\ell\leq n}p_{k,\ell}\}_{n\in \N}$
	is an approximate unit for the hereditary ${\rm C}^*$-subalgebra generated by 
	$\{xJyJe_{L\Lambda}\mid x,y\in C^*_r\Gamma\}$.
Thus by the same argument as in Lemma~\ref{lem: identity},
	we have $q_\X=\lim_n \iota_{\rm nor}(p_n)$
	and hence 
	$$q_\Y^\perp q_\X= \lim_n q_\Y^\perp \iota_{\rm nor}(\vee_{k,\ell\leq n} p_{k,\ell})=\sum_{k,\ell\in \N} q_\Y^\perp \iota_{\rm nor}(p_{k,\ell}).$$
\end{proof}

\subsection {Popa's intertwining-by-bimodules}

\begin{thm}[\cite{Po06B}] Let $(M,\tau)$ be a tracial von Neumann algebra and $P\subset pM p,Q\subset M$ be von Neumann subalgebras. 
Then the following  are equivalent:

\begin{enumerate}
\item There exist projections $p_0\in P, q_0\in Q$, a $*$-homomorphism $\theta:p_0P p_0\rightarrow q_0Q q_0$  and a non-zero partial isometry $v\in q_0M p_0$ such that $\theta(x)v=vx$, for all $x\in p_0P p_0$.

\item There is no sequence $u_n\in\mathcal U(P)$ satisfying $\|E_Q(x^*u_ny)\|_2\rightarrow 0$, for all $x,y\in pMp$.
\end{enumerate}

\end{thm}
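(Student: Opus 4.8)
The plan is to prove the two implications after reformulating~(1) in terms of the Jones basic construction $\langle M,e_Q\rangle:=(JQJ)'\subset\B(L^2M)$ of $Q\subset M$, equipped with its canonical normal semifinite faithful trace $\Tr$ normalized by $\Tr(xe_Qy)=\tau(xy)$ for $x,y\in M$; here left multiplication by $P$ and the Jones projection $e_Q$ lie in $\langle M,e_Q\rangle$, while $P$ commutes with the right $Q$-action. The reformulation I will use is the standard finite-index dictionary: condition~(1) holds if and only if there is a nonzero projection $f\in P'\cap\langle M,e_Q\rangle$ with $\Tr(f)<\infty$, and such an $f$ may be chosen with $f\le\lambda\sum_{x\in F}xe_Qx^{*}$ for some finite $F\subset pMp$ and some $\lambda>0$. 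To establish it: given~(1) I would let $f$ be the orthogonal projection onto the $P$--$Q$-subbimodule $\cH:=\overline{\operatorname{span}}^{\,\|\cdot\|_2}Pv^{*}Q\subset L^2M$, so $f\in P'\cap\langle M,e_Q\rangle$ automatically, and use the relation $yv^{*}=v^{*}\theta(y)$ for $y\in p_0Pp_0$ together with a decomposition of the central support of $p_0$ in $P$ as $\sum_iw_iw_i^{*}$ ($w_i\in P$, $w_i^{*}w_i\le p_0$) to see that $\cH$ is generated as a right Hilbert $Q$-module by the finitely many vectors $w_iv^{*}$, which yields $\Tr(f)<\infty$ and the domination. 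Conversely, from a nonzero finite-$\Tr$ projection $f\in P'\cap\langle M,e_Q\rangle$ the right Hilbert $Q$-module $fL^2M$ is finitely generated, hence isomorphic to $p_1(\C^n\otimes L^2Q)$ for a projection $p_1\in\M_n(Q)$; the commuting left $P$-action becomes a normal unital $*$-homomorphism $P\to p_1\M_n(Q)p_1$, cutting down to a corner of $Q$ recovers $p_0,q_0,\theta$, and the bimodular embedding $fL^2M\hookrightarrow L^2M$, being out of a finitely generated module, is implemented by right multiplication by an element $v\in M$, the required partial isometry. Every object here lies in the corner of $\langle M,e_Q\rangle$ cut by $p$, which is why only $x,y\in pMp$ are needed in~(2).

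For $(1)\Rightarrow(2)$, suppose~(1) holds and, for contradiction, that $u_n\in\mathcal U(P)$ satisfies $\|E_Q(x^{*}u_ny)\|_2\to0$ for all $x,y\in pMp$. Writing $T_F:=\sum_{x\in F}xe_Qx^{*}$ for finite $F\subset pMp$, one has the identity $\sum_{x,y\in F}\|E_Q(x^{*}u_ny)\|_2^{2}=\Tr(T_Fu_nT_Fu_n^{*})$, so the hypothesis forces $\Tr(T_Fu_nT_Fu_n^{*})\to0$ for every such $F$. Taking $f,F,\lambda$ as in the dictionary and using that $A\ge A'\ge0$ and $B\ge0$ imply $\Tr(AuBu^{*})\ge\Tr(A'uBu^{*})$, we get $\Tr(T_Fu_nT_Fu_n^{*})\ge\lambda^{2}\Tr(fu_nfu_n^{*})=\lambda^{2}\Tr(f)>0$, since $f\in P'$ makes $fu_nfu_n^{*}=f$ — contradicting the convergence to $0$.

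For $(2)\Rightarrow(1)$ I argue contrapositively: assuming there is no nonzero finite-$\Tr$ projection in $P'\cap\langle M,e_Q\rangle$, I produce a sequence violating~(2). By the identity above and a diagonalization over an exhausting sequence of finite subsets of $pMp$ with $\eps\to0$, it suffices to find, for each finite $F\subset pMp$ and each $\eps>0$, some $u\in\mathcal U(P)$ with $\Tr(T_FuT_Fu^{*})<\eps^{2}$. If this failed for some $F,\eps$, then $\Tr(T_FuT_Fu^{*})\ge\eps^{2}$ for all $u\in\mathcal U(P)$, and I run Popa's averaging argument: the set $C:=\overline{\conv}^{\,\|\cdot\|_{2,\Tr}}\{uT_Fu^{*}:u\in\mathcal U(P)\}$ is a bounded, closed, convex, $\mathcal U(P)$-conjugation-invariant subset of the Hilbert space $L^2(\langle M,e_Q\rangle,\Tr)$ (conjugation being $\|\cdot\|_{2,\Tr}$-isometric), so its unique element $T_0$ of minimal $\|\cdot\|_{2,\Tr}$-norm is conjugation-fixed, i.e. $T_0\in P'\cap\langle M,e_Q\rangle$; moreover $T_0\ge0$, $\Tr(T_0)\le\Tr(T_F)<\infty$ by lower semicontinuity of $\Tr$, and $T_0\ne0$ because $\langle T_0,T_F\rangle_{2,\Tr}\ge\eps^{2}$. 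Any nonzero spectral projection $\mathbf 1_{[\delta,\infty)}(T_0)$ is then a nonzero finite-$\Tr$ projection in $P'\cap\langle M,e_Q\rangle$, a contradiction.

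The conceptual heart is the averaging step — passing from a uniform positive lower bound on $\Tr(T_FuT_Fu^{*})$ to a genuine finite-trace projection in $P'\cap\langle M,e_Q\rangle$, including the verification that the limit $T_0$ still has finite trace. The main technical cost is the finite-index dictionary, and within it the delicate part: extracting an honest $*$-homomorphism $\theta$ and an honest partial isometry $v\in M$ from the abstract finitely generated $P$--$Q$-bimodule $fL^2M$, and, conversely, the finite-generation bookkeeping for $\overline{\operatorname{span}}\,Pv^{*}Q$ which is precisely what legitimizes restricting to $x,y\in pMp$ in condition~(2).
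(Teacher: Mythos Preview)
The paper does not give a proof of this statement: it is Popa's intertwining-by-bimodules theorem, recalled from \cite{Po06B} in the preliminaries without argument. Your sketch is the standard proof --- recasting~(1) as the existence of a nonzero finite-$\Tr$ projection in $P'\cap\langle M,e_Q\rangle$ via the Jones basic construction, deducing $(1)\Rightarrow(2)$ from the domination of such a projection by a finite sum $\sum_{x\in F} xe_Qx^{*}$, and running Popa's convex averaging in $L^2(\langle M,e_Q\rangle,\Tr)$ for the contrapositive of $(2)\Rightarrow(1)$ --- and is correct in outline.
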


If one of these equivalent conditions holds,  we write $P\prec_{M}Q$.

\subsection{Relative amenability}

Given a finite von Neumann algebra $M$ with a von Neumann subalgebra $Q\subset M$.
Recall from \cite{OzPo10I} that a von Neumann subalgebra $P\subset pMp$ with $p\in \cP(M)$ 
	is amenable relative to $Q$ in $M$ if there exists a u.c.p.\ map $\phi: p\langle M, e_Q\rangle p \to P$ 
		such that $\phi_{\mid pMp}=E_P$, where $E_P:pMp\to P$ is the normal conditional expectation.
Following \cite{IoPoVa13}, we say $P$ is strongly nonamenable relative to $Q$ in $M$ 
	if for any nonzero projection $p'\in P'\cap pMp$, we have $Pp'$ is not amenable relative to $Q$ in $M$.

The following is an abstraction of \cite[Corollary 2.12]{Ioa15} (also a straightforward relativization of \cite[Lemma 4.1]{Din22}),
	and thus we omit the proof.

\begin{lem}\label{lem: abstract Ioana}
Let $(M,\tau)$ and $N$ be tracial von Neumann algebras and $Q\subset M$ a von Neumann subalgebra.
Suppose there exist another tracial von Neumann algebra  $(\tilde M,\tilde \tau)$
	such that $M\subset \tilde M$ and $\tilde \tau_{\mid M}=\tau$,
	and a net of trace preserving automorphisms 
	$\{\alpha_t\}_{t\in \R}\subset \Aut(\tilde M)$
	such that  ${\alpha_t}_{\mid Q}\in  \Aut (Q)$,
	and such that ${\alpha_t}_{| M}\to \id_M$ in the point-$\|\cdot\|_2$ topology, as $t\to 0$.
Set $\tilde \alpha_t=\alpha_t\otimes\id_N\in \Aut(\tilde M\ovt N)$.

If a von Neumann subalgebra $P\subset p(M\ovt N)p$ is amenable relative to $Q\ovt N$ in $M\ovt N$, where $p\in \cP(M\ovt N)$ is some nonzero projection, 
	then for any $0<\delta \leq 1$, one of the following is true.
\begin{enumerate}
\item \label{item: uniform convergence}
There exists $t_\delta>0$ such that $\inf_{u\in \cU(P)}\|E_{M\ovt Q}(\tilde \alpha_{t_\delta}(u))\|_2>(1-\delta)\|p\|_2$.
\item \label{item: nonzero central vector in orthogonal}
There exists a net $\{\eta_k\}\subset \cK^\perp$, 
	where $\cK$ is the closure of $(M\ovt N) e_{Q\ovt N} (\tilde M\ovt N)$ inside $L^2(\langle \tilde M\ovt N, e_{Q\ovt N}\rangle)$, 
	such that $\|x\eta_k- \eta_k x\|_2\to 0$ for all $x\in P$,
	$\limsup_k \|y \eta_k\|_2\leq 2\|y\|_2$ for all $y\in p(M\ovt N)p$ and 
	$\limsup_k \|p \eta_k\|_2 >0$.
\end{enumerate}
\end{lem}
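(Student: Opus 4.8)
The plan is to translate relative amenability into an almost‑central net of vectors in a basic construction, transport that net by the lifted deformation, and then split according to whether the deformation is uniformly small on the unit ball of $P$. To set up, by the standard characterization of relative amenability \cite{OzPo10I} the hypothesis produces a net $\xi_n\in L^2(p\langle M\ovt N,e_{Q\ovt N}\rangle p)$ with $p\xi_np=\xi_n$, $\|\xi_n\|_2=\|p\|_2$, $\langle a\xi_n,\xi_n\rangle\to\tau(a)$ and $\langle\xi_na,\xi_n\rangle\to\tau(a)$ for all $a\in p(M\ovt N)p$, and $\|u\xi_nu^*-\xi_n\|_2\to0$ for all $u\in\cU(P)$. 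Since $\tilde\tau_{\mid M}=\tau$ we regard $L^2\langle M\ovt N,e_{Q\ovt N}\rangle$ inside $L^2\langle\tilde M\ovt N,e_{Q\ovt N}\rangle$; there the subspace $\cK$ of the statement is the range of the Jones projection $e_\cK:=e_{M\ovt N}$ of $M\ovt N\subset\tilde M\ovt N$, viewed in $\langle\tilde M\ovt N,e_{Q\ovt N}\rangle$. Thus $\xi_n\in\cK$, $e_\cK ze_\cK=E_{M\ovt N}(z)e_\cK$ for $z\in\tilde M\ovt N$, and $\cK$ (hence $\cK^\perp$) is invariant under both the left and right actions of $M\ovt N$ on $L^2\langle\tilde M\ovt N,e_{Q\ovt N}\rangle$. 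Finally, since $\tilde\alpha_t$ is trace preserving and $\tilde\alpha_t(Q\ovt N)=Q\ovt N$ (using ${\alpha_t}_{\mid Q}\in\Aut(Q)$), it extends to a $\Tr$‑preserving automorphism $\hat\alpha_t$ of $\langle\tilde M\ovt N,e_{Q\ovt N}\rangle$ fixing $e_{Q\ovt N}$, hence a unitary on $L^2\langle\tilde M\ovt N,e_{Q\ovt N}\rangle$, with $\hat\alpha_s(e_\cK)=e_{\alpha_s(M)\ovt N}$ for all $s$.

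Two soft estimates follow. First, for $z\in\tilde M\ovt N$ one has $\langle z^*z\,\xi_n,\xi_n\rangle=\langle E_{M\ovt N}(z^*z)\xi_n,\xi_n\rangle$, whose $\limsup_n$ is $\le\|z\|_2^2$, so $\limsup_n\|z\xi_n\|_2\le\|z\|_2$ and, symmetrically, $\limsup_n\|\xi_nz\|_2\le\|z\|_2$. Second, for $u\in\cU(P)$, using that $\hat\alpha_t$ is an $L^2$‑isometry, $\limsup_n\|[u,\hat\alpha_t(\xi_n)]\|_2=\limsup_n\|[\tilde\alpha_{-t}(u),\xi_n]\|_2\le 2\|\tilde\alpha_{-t}(u)-u\|_2$, which tends to $0$ as $t\to0$ by hypothesis.

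Now fix $0<\delta\le1$. If conclusion~\ref{item: uniform convergence} holds we are done, so suppose it fails: for every $t>0$ there is $u_t\in\cU(P)$ with $\|E_{M\ovt Q}(\tilde\alpha_t(u_t))\|_2\le(1-\delta)\|p\|_2$. Index a net by triples $(\cF,\cG,\epsilon)$, with $\cF\subset\cU(P)$ and $\cG\subset p(M\ovt N)p$ finite and $\epsilon>0$: choose $t>0$ so small that $2\|\tilde\alpha_{-t}(w)-w\|_2<\epsilon$ for $w\in\cF\cup\{p\}$, then $n$ so large that $\|[v,\hat\alpha_t(\xi_n)]\|_2<\epsilon$ for $v\in\cF$, $\|\tilde\alpha_{-t}(y)\xi_n\|_2<\|y\|_2+\epsilon$ for $y\in\cG$, $\|\tilde\alpha_{-t}(p)\xi_n\|_2>\|p\|_2-\epsilon$, and $\|e_\cK\hat\alpha_t(\xi_n)\|_2=\|\hat\alpha_{-t}(e_\cK)\xi_n\|_2=\|e_{\alpha_{-t}(M)\ovt N}\xi_n\|_2<(1-\delta)\|p\|_2+\epsilon$; the first three requirements are granted by the two soft estimates and the hypothesis on $\alpha_t$, and the last by the comparison below applied to $u=u_t$. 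Put $\eta:=e_{\cK^\perp}\hat\alpha_t(\xi_n)\in\cK^\perp$. Along this net: $\|[x,\eta]\|_2\to0$ for every $x\in P=\operatorname{span}\cU(P)$ (by cofinality, linearity and $[v,e_{\cK^\perp}\hat\alpha_t(\xi_n)]=e_{\cK^\perp}[v,\hat\alpha_t(\xi_n)]$); $\limsup\|y\eta\|_2\le\limsup\|y\hat\alpha_t(\xi_n)\|_2=\limsup\|\tilde\alpha_{-t}(y)\xi_n\|_2\le\|y\|_2\le2\|y\|_2$ for every $y\in p(M\ovt N)p$ (using $ye_{\cK^\perp}=e_{\cK^\perp}y$); and $\|p\eta\|_2^2\ge\|p\hat\alpha_t(\xi_n)\|_2^2-\|e_\cK\hat\alpha_t(\xi_n)\|_2^2=\|\tilde\alpha_{-t}(p)\xi_n\|_2^2-\|e_{\alpha_{-t}(M)\ovt N}\xi_n\|_2^2>(\|p\|_2-\epsilon)^2-\big((1-\delta)\|p\|_2+\epsilon\big)^2$, whence $\limsup\|p\eta\|_2^2\ge(2\delta-\delta^2)\|p\|_2^2>0$. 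This is conclusion~\ref{item: nonzero central vector in orthogonal}.

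Everything above is soft; the real content is the quantitative comparison invoked in the last requirement, namely
\[
\limsup_n\big\|e_{\alpha_{-t}(M)\ovt N}\xi_n\big\|_2\ \le\ \big\|E_{M\ovt Q}\big(\tilde\alpha_t(u)\big)\big\|_2\qquad(u\in\cU(P),\ t>0),
\]
which makes the badness of $u_t$ prevent $\hat\alpha_t(\xi_n)$ from being almost absorbed into $\cK$ and thus keeps $e_{\cK^\perp}\hat\alpha_t(\xi_n)$ non‑negligible. This is exactly where the two‑sided traciality \emph{and} the $P$‑centrality of $\xi_n$ have to be used together: one transfers from the ``diagonal'' vector $\xi_n$, which interacts with the Jones projection $e_{\alpha_{-t}(M)\ovt N}$, to the single unitary $u$, which interacts with $E_{M\ovt Q}\circ\tilde\alpha_t$, by expanding $e_{\alpha_{-t}(M)\ovt N}$ along a Pimsner--Popa basis of $\alpha_{-t}(M)\ovt N$ over $Q\ovt N$ and inserting $u\xi_nu^*\approx\xi_n$. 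This comparison is the relativization of \cite[Corollary~2.12]{Ioa15} (equivalently of \cite[Lemma~4.1]{Din22}); it---together with the routine but slightly delicate projection bookkeeping needed when $p\neq1$---is the step I expect to be the main obstacle.
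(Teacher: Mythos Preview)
Your approach is essentially the same as the paper's (commented) proof, which also produces the net $\eta=e_\cK^\perp\hat\alpha_t(\xi_n)$ from the relative-amenability vectors $\xi_n$ and uses the identity $e_\cK z e_\cK=E_{M\ovt N}(z)e_\cK$ together with the tracial bound $\limsup_n\|z\,\hat\alpha_t(\xi_n)\|_2\le\|z\|_2$; the only structural difference is that the paper splits cases on the size of $\limsup_n\|e_\cK^\perp\hat\alpha_t(\xi_n)\|_2$ first and derives~(\ref{item: uniform convergence}) in the ``small'' case, whereas you negate~(\ref{item: uniform convergence}) and work backwards.

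One point to flag: the sharp comparison you isolate,
\[
\limsup_n\big\|e_\cK\,\hat\alpha_t(\xi_n)\big\|_2\ \le\ \big\|E_{M\ovt N}\big(\tilde\alpha_t(u)\big)\big\|_2,
\]
is stronger than what \cite{Ioa15} or \cite{Din22} actually prove, and your Pimsner--Popa sketch does not obviously yield it. What those arguments (and the paper's Case~1 computation) give is
\[
\big\|E_{M\ovt N}\big(\tilde\alpha_t(u)\big)\big\|_2\ \ge\ \|p\|_2-2\limsup_n\big\|e_\cK^\perp\hat\alpha_t(\xi_n)\big\|_2,
\]
obtained by writing $e_\cK\,\tilde\alpha_t(u)\,e_\cK=E_{M\ovt N}(\tilde\alpha_t(u))e_\cK$, inserting $u\xi_n\approx\xi_n u$, and using that $e_\cK$ commutes with the right $\tilde M\ovt N$-action---no orthonormal basis is needed. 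The contrapositive of this weaker inequality already gives $\limsup_n\|e_\cK^\perp\hat\alpha_t(\xi_n)\|_2\ge\delta\|p\|_2/2$ whenever $u=u_t$ is bad, which is all your lower bound on $\|p\eta\|_2$ requires; so the argument goes through once you replace the sharp comparison by this one.
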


Specializing to the situation of Bernoulli actions, we obtain the following result.

\begin{prop}\label{prop: intertwining from rel amen}
Let $\Gamma$ be a nonamenable group and denote by $M=L^\infty(X^\Gamma )\rtimes\Gamma$
	the von Neumann algebra associated with its Bernoulli action.
Let $N$ be a finite von Neumann algebra and $P\subset p( M\ovt N)p$ a von Neumann subalgebra with some nonzero projection $p\in M\ovt N$.
Suppose $P$ is amenable relative to $L\Gamma\ovt N$,
	and $P$ has no direct summand that is amenable relative to $1\otimes N$,
	then $P$ is rigid relative to deformation deformation associated with the Bernoulli action.

Moreover, if $\Gamma$ is i.c.c.\ and $N$ is a factor, then one of the following is true.
\begin{enumerate}
	\item There exists some partial isometry $v\in M\ovt N$ such that $v^* (\cN_{p(M\ovt N)p}(P)'' )v\subset L\Gamma\ovt N$ and $vv^*=p$.
	\item $\cN_{p(M\ovt N)p}(P)''\prec_{M\ovt N} L^\infty(X^\Gamma)\ovt N$.
\end{enumerate}
\end{prop}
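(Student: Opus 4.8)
The plan is to combine the abstract transfer-of-rigidity statement of Lemma~\ref{lem: abstract Ioana} with the Gaussian/Bernoulli deformation $\{\alpha_t\}$ (the one arising from the malleable deformation of the Bernoulli action, say realized on $\widetilde M = L^\infty(X^\Gamma)\ovt L^\infty(X^\Gamma)\rtimes\Gamma$ with $\alpha_t$ fixing $L\Gamma$ pointwise and converging to the identity on $M$ as $t\to 0$), and then feed the two alternatives of that lemma into a standard spectral-gap / intertwining dichotomy. First I would set $\widetilde\alpha_t = \alpha_t\ot\id_N$ and apply Lemma~\ref{lem: abstract Ioana} with $Q = L\Gamma$: since $P$ is amenable relative to $L\Gamma\ovt N = Q\ovt N$ by hypothesis, for each $0<\delta\le 1$ we get either \eqref{item: uniform convergence}, a uniform estimate $\inf_{u\in\cU(P)}\|E_{M\ovt Q}(\widetilde\alpha_{t_\delta}(u))\|_2 > (1-\delta)\|p\|_2$ (note $M\ovt Q$ here is to be read inside $\widetilde M\ovt N$, with $Q=L\Gamma$), or \eqref{item: nonzero central vector in orthogonal}, an approximately-$P$-central net $\{\eta_k\}$ in $\cK^\perp$ with uniformly bounded $\|y\eta_k\|_2$ and $\limsup_k\|p\eta_k\|_2 > 0$. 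The first alternative is exactly the ``rigidity relative to the Bernoulli deformation'' assertion of the first paragraph of the proposition: a uniform $L^2$-proximity of $\widetilde\alpha_{t_\delta}(\cU(P))$ to $M\ovt L\Gamma\ovt N$ is what one calls $P$ being rigid (or having spectral gap) with respect to the deformation.

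For the ``moreover'' part I would argue that alternative \eqref{item: nonzero central vector in orthogonal} cannot occur under the standing hypothesis that $P$ has no direct summand amenable relative to $1\ot N$. Here the point is the same as in \cite{IoPoVa13}/\cite{Din22}: the Hilbert space $\cK$ is the closure of $(M\ovt N)e_{L\Gamma\ovt N}(\widetilde M\ovt N)$, and because the Bernoulli action has a weakly-mixing complement, the $M\ovt N$-$M\ovt N$ bimodule $\cK^\perp$ is, after the usual Fourier-type decomposition over the tensor powers of $L^2_0(X_0)$, a multiple of bimodules that are ``weakly contained in the coarse bimodule over $L\Gamma\ovt N$ amplified along $1\ot N$.'' Concretely, a net of almost-$P$-central, $\|\cdot\|_2$-bounded, non-negligible vectors in $\cK^\perp$ forces some nonzero direct summand of $P$ to be amenable relative to $1\ot N$ inside $M\ovt N$ — contradicting the hypothesis. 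Hence only alternative \eqref{item: uniform convergence} survives, giving rigidity; this step is where the Bernoulli structure (weak mixing of the base action, malleability) is genuinely used and is the main obstacle, since it requires identifying $\cK^\perp$ as a bimodule and running the relative-amenability criterion of \cite{OzPo10I} in the amplified setting.

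With rigidity in hand, the conclusion is the standard Popa-style endgame and applies verbatim to the normalizer. Since the estimate in \eqref{item: uniform convergence} passes to the von Neumann algebra generated by $\cU(P)$ and in fact to $\cN_{p(M\ovt N)p}(P)''$ (the deformation $\widetilde\alpha_t$ is compatible with normalizers because $\widetilde\alpha_t$ restricted to $M\ovt N$ converges pointwise to the identity and $M\ovt L\Gamma\ovt N$ is globally $\widetilde\alpha_t$-invariant — this is the usual ``transfer of rigidity to the normalizer'' argument), we obtain a uniform $L^2$-closeness of $\widetilde\alpha_{t_0}\!\big(\cU(\cN_{p(M\ovt N)p}(P)'')\big)$ to $M\ovt L\Gamma\ovt N$ for some small $t_0>0$. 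A now-classical application of Popa's transversality/intertwining technique for malleable deformations (as in \cite{Po06B, IoPoVa13}) then yields the dichotomy: either $\cN_{p(M\ovt N)p}(P)''$ can be unitarily (partial-isometry) conjugated into $M\ovt L\Gamma\ovt N = L\Gamma\ovt N$ with the partial isometry having full left support $p$ — alternative (1) — or the ``moving apart'' of the deformation is obstructed by an intertwining into the fixed-point algebra $L^\infty(X^\Gamma)^{\alpha}\,$, i.e.\ $\cN_{p(M\ovt N)p}(P)''\prec_{M\ovt N} L^\infty(X^\Gamma)\ovt N$ — alternative (2). The use of ``$\Gamma$ i.c.c.\ and $N$ a factor'' enters at the very last step to upgrade the partial-isometry conjugation into the clean form with $vv^*=p$, via a maximality argument on the left support combined with the factoriality of $M\ovt N$.
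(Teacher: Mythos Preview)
Your proposal is correct and follows essentially the same route as the paper: apply Lemma~\ref{lem: abstract Ioana} with $Q=L\Gamma$, rule out alternative~\eqref{item: nonzero central vector in orthogonal} by identifying $\cK^\perp$ as weakly contained in $L^2M\otimes L^2N\otimes L^2M$ (the paper makes this explicit via the fact that $L^2(\tilde M\ominus M)$ is a weakly coarse $M$-bimodule), and then defer the ``moreover'' dichotomy to the argument of \cite[Corollary~4.3]{IoPoVa13}. One small correction to your last paragraph: $L^\infty(X^\Gamma)$ is \emph{not} the fixed-point algebra of the malleable deformation (that would be $L\Gamma$), so alternative~(2) does not arise from a ``fixed-point algebra'' obstruction but from the clustering/mixing structure of the Bernoulli crossed product; the paper also isolates the input $P\not\prec_{M\ovt N}1\otimes N$ (via \cite[Remark~2.2]{Ioa15}) as what is actually fed into \cite[Corollary~4.3]{IoPoVa13}, which your write-up does not mention.
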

\begin{proof}
Denote by $\tilde M$ and $\{\alpha_t\}\subset \Aut(\tilde M)$ from \cite{Po06B}.
It suffices to show that $(\ref{item: nonzero central vector in orthogonal})$ of Lemma~\ref{lem: abstract Ioana} does not occur by Popa's transversality lemma \cite[Lemma 2.1]{Po08}.
In fact, note that as $M\ovt N$ bimodules we have
$$L^2(\langle\tilde M\ovt N, {e_{L\Gamma \ovt N}}\rangle)\ominus \cK 
	=( L^2(\tilde M \ominus M) \otimes L^2N )\otimes_{L\Gamma \ovt N} (L^2\tilde M\otimes L^2 N)
	\prec  L^2M\otimes L^2N \otimes L^2 M,
$$
	since $L^2(\tilde M\ominus M)$ is a weakly coarse $M$-bimodule.
If $(\ref{item: nonzero central vector in orthogonal})$ were the case, then one would have almost $P$-central vectors in $L^2M\otimes L^2N\otimes L^2M$,
	which implies that $P$ has a direct summand is amenable relative to $1\otimes N$ in $M\ovt N$.

The moreover part follows from the same argument as in the beginning of the proof of \cite[Corollary 4.3]{IoPoVa13} by noticing 
	that $P\not\prec_{M\ovt N} 1\otimes N$ since $P$ has no direct summand amenable relative to $1\otimes N$ in $M\ovt N$ \cite[Remark 2.2]{Ioa15}.
\end{proof}

\subsection{Weak amenability and W*CMAP}
Recall from \cite{CH89} that a discrete group $\Gamma$ is weakly amenable if there exists a net of finitely supported functions $\varphi_i: \Gamma\to \C$
	such that $\varphi_i\to 1$ pointless,
	$m_{\varphi_i}(u_g)=\varphi_i(g)u_g$ extends to a c.b.\ map on $L\Gamma$
	and there exists some $C>0$ satisfying $\sup_i\|m_{\varphi_i}\|_{\rm cb}\leq C$.
The Cowling-Haagerup constant $\Lambda_{\rm cb}(\Gamma)$ is the infimum of all $C$ for which such a net $\varphi_i$ exists.
Similarly, a von Neumann algebra $M$ has the W$^*$ complete metric approximation property (W$^*$CMAP) if there exists a net of normal finite rank c.c.\ 
	$\theta_i: M\to M$ such that $\theta_i\to \id_M$ in the point-weak$^*$ topology.
Given a discrete group $\Gamma$,  one has $L\Gamma$ has W$^*$CMAP if and only if 
	$\Lambda_{\rm cb}(\Gamma)=1$.

\section{Biexactness and a Dichotomy of subalgebras}\label{sec: dichotomy}

In this section, we obtain a dichotomy of von Neumann subalgebras of $M\ovt N$, where $M$ is assumed to be biexact relative to some von Neumann subalgebra.
This can be seen as a relative version of \cite[Theorem 7.1]{DKEP22} and \cite[Proposition 2.3]{Din22}, where $N=\C$.
Since the arguments we employ here are somewhat ${\rm C}^*$-algebraic, we in addition assume $N$ has W$^*$CMAP.
This extra technical condition is used in the same way as in \cite[Proposition 7.3]{Iso20}, 
	and is similar to how exactness is used in \cite{Oz04}.
Specifically, we prove the following.

\begin{prop}\label{prop: dichotomy}
Let $M$, $N$ be finite von Neumann algebras and $Q\subset M$ a von Neumann subalgebra.
Suppose $M$ is biexact relative to $Q$ and $N$ has W$^*$CMAP.
Then for any von Neumann subalgebra $P\subset p( M\ovt N)p$ for some nonzero projection $p\in M\ovt N$, 
we have $P$ is either properly proximal relative to $Q\ovt N$ in $M\ovt N$, or there exists some nonzero projection $z\in \cZ(P)$ such that $Pz$ is amenable relative to $N$ in $M\ovt N$.
\end{prop}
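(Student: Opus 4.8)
The plan is to adapt the argument of \cite[Theorem 7.1]{DKEP22} and \cite[Proposition 2.3]{Din22} to the relative tensor setting, using biexactness of $M$ relative to $Q$ together with the W$^*$CMAP of $N$ to produce the u.c.p.\ approximations needed to run the boundary-piece machinery of \cite{DP22}.

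First I would set up the boundary piece. Let $\X$ be the $(M\ovt N)$-boundary piece associated with $Q\ovt N$ and $1\otimes N$, and similarly let $\X_Q$ be the $M$-boundary piece associated with $Q$. The key structural fact to establish is that biexactness of $M$ relative to $Q$ \emph{tensors up}: there are nets of u.c.p.\ maps $\phi_i: M\ovt N\to \M_{n(i)}(\C)$ and $\psi_i: \M_{n(i)}(\C)\to \bS_\X(M\ovt N)$ with $\psi_i\circ\phi_i\to\id$ in the $(M\ovt N)$-topology. To build these, I would take the maps $\phi_i^M: M\to\M_{k(i)}(\C)$, $\psi_i^M:\M_{k(i)}(\C)\to\bS_{\X_Q}(M)$ witnessing relative biexactness of $M$, and tensor them with the finite-rank c.c.\ maps $\theta_j: N\to N$ from the W$^*$CMAP of $N$ (factoring each $\theta_j$ through a matrix algebra, which is possible by finite rank). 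One checks that $\bS_{\X_Q}(M)\ovt N$ lands inside $\bS_{\X}(M\ovt N)$ for the appropriate boundary piece, so that $\psi_i^M\ovt(\text{matrix factorization of }\theta_j)$ has the right target; here is exactly where the W$^*$CMAP (rather than, say, mere exactness of $N$) is needed to keep everything completely contractive and finite-dimensional in the middle, playing the role that local reflexivity/exactness plays in \cite{Oz04, Iso20}. This gives that $M\ovt N$ is biexact relative to $\X$.

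Next I would invoke the dichotomy built into the proper-proximality framework of \cite{DP22}: once $M\ovt N$ is biexact relative to $\X$, any von Neumann subalgebra $P\subset p(M\ovt N)p$ is properly proximal relative to $\X$ in $M\ovt N$, unless $P$ fails to be properly proximal relative to $\X$, in which case by definition there is a nonzero central projection $z\in\cZ(P)$ and a $Pz$-bimodular u.c.p.\ map $\phi: z\bS_\X(M\ovt N)z\to Pz$ with $\phi_{|z(M\ovt N)z}$ normal. Since $\X$ is associated with the family $\{Q\ovt N,\ 1\otimes N\}$ and the conclusion of the proposition only asks for proper proximality relative to $Q\ovt N$ (the larger algebra), I should take $\X$ to be associated with $Q\ovt N$ alone, not the pair; then I would argue separately that in the non-properly-proximal case, the $Pz$-central state lets me construct a u.c.p.\ map $z\langle M\ovt N, e_N\rangle z\to Pz$ extending $E_{Pz}$, i.e.\ $Pz$ is amenable relative to $1\otimes N$ in $M\ovt N$. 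This last implication is essentially the content of \cite[Theorem 7.1]{DKEP22} in the relative form: one uses that $\langle M\ovt N, e_{1\otimes N}\rangle$ embeds appropriately into (a quotient of) $\bS_\X(M\ovt N)$ or its bidual, so the state pulls back. I would cut down by a central projection to pass from ``a direct summand'' to the clean statement with $z\in\cZ(P)$.

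The main obstacle I expect is the first step: verifying that relative biexactness genuinely tensors with W$^*$CMAP, i.e.\ that $\psi_i^M\otimes\theta_j'$ (with $\theta_j'$ a matrix factorization of $\theta_j$) actually maps into $\bS_\X(M\ovt N)$ and that the resulting net converges in the $M\ovt N$-topology rather than some weaker topology. This requires a careful comparison of the small-at-infinity boundary of a tensor product with the tensor product of boundaries, controlling the $\|\cdot\|_{\infty,1}$ and $\|\cdot\|_{\infty,2}$ norms under the tensor operation, and checking that commutators $[\psi_i^M(x)\otimes\theta_j'(y),\ a'\otimes b']$ with $a'\otimes b'\in (M\ovt N)'$ land in $\K_\X^{\infty,1}(M\ovt N)$; the finite-rank-ness of $\theta_j$ is what makes this go through, since it lets one absorb the $N$-side into the compact/boundary part. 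Once that technical point is in hand, the rest is an application of the already-developed dichotomy machinery of \cite{DKEP22, DP22} together with a routine central-projection argument.
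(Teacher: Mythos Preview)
Your plan diverges from the paper's argument in a substantive way, and one step contains a genuine error.

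The paper does \emph{not} attempt to show that $M\ovt N$ is biexact relative to $Q\ovt N$. Instead it proceeds in two stages. First (Lemma~\ref{lem: dichotomy}, with no W$^*$CMAP hypothesis yet) it uses biexactness of $M$ in the form of normal u.c.p.\ maps $\theta_i:\B(L^2M)\to\bS_{\X_Q}(M)$ with $\theta_i|_M\to\id_M$ in the $M$-topology, tensors these with $\id_N$ to obtain $\tilde\theta_i:\B(L^2M)\ovt N\to \bS_{\X_Q}(M)\otimes_{\min}N\subset\bS_{\X_{Q\ovt N}}(M\ovt N)$, and then composes with the $Pz$-bimodular map $\psi$ coming from non-proper-proximality. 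A weak$^*$ limit gives $\phi:\B(L^2M)\ovt N\to Pz$ agreeing with $\psi$ on $M\otimes_{\min}N$. Only in the second stage is W$^*$CMAP invoked: the finite-rank normal maps $\phi_n:N\to N$ yield $\id\otimes\phi_n:\B(L^2M)\ovt N\to\B(L^2M)\otimes_{\min}N$, and precomposing $\phi$ with these upgrades the agreement from $M\otimes_{\min}N$ to all of $M\ovt N$. Since $\B(L^2M)\ovt N=\langle M\ovt N, e_{1\otimes N}\rangle$, this is exactly a $Pz$-bimodular conditional expectation witnessing amenability relative to $N$.

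Your route instead tries to tensor the matrix-algebra factorizations of biexactness with those of W$^*$CMAP to prove $M\ovt N$ is biexact relative to $Q\ovt N$, and then quote the dichotomy from \cite{DP22}. Two problems arise. First, the convergence you need is in the $M\ovt N$-topology for arbitrary $x\in M\ovt N$, not just simple tensors; W$^*$CMAP only gives point-weak$^*$ convergence on $N$, and promoting $a\otimes(\theta_j(b)-b)\to 0$ to the $M\ovt N$-topology uniformly over an approximating net is delicate. The paper's approach avoids this entirely by tensoring with $\id_N$ rather than with $\theta_j$. Second, and more concretely wrong: your claim that $\langle M\ovt N, e_{1\otimes N}\rangle$ embeds into $\bS_{\X_{Q\ovt N}}(M\ovt N)$ is false. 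One has $\langle M\ovt N, e_{1\otimes N}\rangle=\B(L^2M)\ovt N$, and for $S\otimes n$ with $S\in\B(L^2M)$ the commutator $[S\otimes n, JaJ\otimes 1]=[S,JaJ]\otimes n$ lies in $\K_{\X_{Q\ovt N}}^{\infty,1}$ only when $S\in\bS_{\X_Q}(M)$; so only $\bS_{\X_Q}(M)\otimes_{\min}N$ sits inside the boundary, not all of $\B(L^2M)\ovt N$. This is precisely why the paper must route through $\bS_{\X_Q}(M)\otimes_{\min}N$ via the $\theta_i$, and why W$^*$CMAP is used afterwards to bridge $M\otimes_{\min}N$ and $M\ovt N$, rather than to produce biexactness of the tensor product up front.
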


We first consider the situation that $N$ is without W$^*$CMAP.

\begin{lem}\label{lem: dichotomy}
Let $M$, $N$ be finite von Neumann algebras and $Q\subset M$ a von Neumann subalgebra.
Suppose $M$ is biexact relative to $Q$.
Then for any von Neumann subalgebra $P\subset p( M\overline\otimes N)p$ with $0\neq p\in \cP(M\ovt N)$
	that is not properly proximal relative to $Q\overline\otimes N$ in $M\overline\otimes N$,
	there exist a nonzero central projection $z\in \cZ(P)$ 
	and a u.c.p.\ map $\phi\in \B(L^2M)\ovt N\to Pz$ such that $\phi_{\mid M\otimes_{\rm min} N}=\psi$, 
	where $\psi: M\ovt N\to Pz$ is a normal u.c.p.\ map with $\psi_{\mid Pz}=\id$.
\end{lem}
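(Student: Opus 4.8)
The plan is to deduce the statement from the definition of biexactness relative to $Q$ together with the failure of proper proximality of $P$ relative to $Q \ovt N$, via an averaging argument over the (approximate) "identity" structure, in the spirit of \cite[Theorem 7.1]{DKEP22} and \cite[Proposition 2.3]{Din22}. First I would unpack what "not properly proximal relative to $Q\ovt N$ in $M\ovt N$" gives: there is a nonzero central projection $z\in\cZ(P)$ and a $Pz$-bimodular u.c.p.\ map $\Phi: z\bS_{\X}(M\ovt N)z\to Pz$ with $\Phi_{|z(M\ovt N)z}$ normal, where $\X$ is the $(M\ovt N)$-boundary piece associated with $Q\ovt N$; after cutting down I may as well assume $z\le p$ and work with $Pz$. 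The target u.c.p.\ map $\phi$ should be of the form $T\mapsto \Phi(zTz)$ on an appropriate copy of $\B(L^2M)\ovt N$ sitting inside (or mapping into) $\bS_{\X}(M\ovt N)$, and $\psi$ will be the restriction to $M\ovt N$, which is already normal and fixes $Pz$ by bimodularity.

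The key point, and the step where biexactness of $M$ relative to $Q$ enters, is to produce a genuine (not merely completely positive) copy of $\B(L^2M)\ovt N$ — or at least of $M\mot N$ — inside $\bS_{\X}(M\ovt N)$ modulo the compact-type ideal, so that the composition makes sense. Biexactness relative to $Q$ supplies nets of u.c.p.\ maps $\phi_i: M\to \M_{n(i)}(\C)$ and $\psi_i:\M_{n(i)}(\C)\to \bS_Q(M)$ with $\psi_i\circ\phi_i\to \id$ in the $M$-topology. I would tensor these with finite-rank c.c.\ maps $\theta_j: N\to N$ coming from W$^*$CMAP of $N$ (composed with suitable matrix factorizations $N\xrightarrow{\alpha_j}\M_{m(j)}(\C)\xrightarrow{\beta_j}N$), to obtain u.c.p.\ maps $M\ovt N\to \M_{n(i)}(\C)\ot\M_{m(j)}(\C)\to \bS_Q(M)\ovt N$ that approximate the identity. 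One then checks that $\bS_Q(M)\ovt N\subset \bS_{Q\ovt N}(M\ovt N)$ up to the relevant boundary piece — this is where the W$^*$CMAP of $N$ plays its role, ensuring the $N$-factor is handled "exactly" enough that the tensor of a boundary map with a finite-rank map still lands in the small-at-infinity boundary of the tensor product; this mirrors the use of exactness in \cite{Oz04} and \cite[Proposition 7.3]{Iso20}.

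With these maps in hand, the construction of $\phi$ is: pick a point-ultraweak limit of $z\,(\text{something})\,z$ evaluated through $\psi_i\ot(\beta_j\circ\theta_j)$ and then through $\Phi$, i.e.\ $\phi = \Phi\circ(\text{lim}_i\,\text{lim}_j\, \psi_i\ot\beta_j)\circ(\text{coamplification})$ suitably interpreted, and restrict attention to $\B(L^2M)\ovt N$ (or its weak-$*$ closure of the $M\mot N$ piece). The normality of $\psi=\phi_{|M\mot N}$ and the property $\psi_{|Pz}=\id$ follow because $\Phi_{|z(M\ovt N)z}$ is normal and $\Phi$ is $Pz$-bimodular, combined with $\psi_i\circ\phi_i\ot\theta_j\to\id$ in the appropriate topology so that the composite restricted to $M\ovt N$ recovers the identity on the weakly dense subalgebra and hence, by normality, on all of $Pz$. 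The main obstacle I anticipate is the compatibility of the boundary pieces under tensoring — precisely showing that a finite-rank perturbation on the $N$-side keeps elements of $\bS_Q(M)\ovt N$ inside $\bS_{Q\ovt N}(M\ovt N)$ and that convergence in the $M$-topology of $\bS_Q(M)$ upgrades to convergence in the $(M\ovt N)$-topology of $\bS_{Q\ovt N}(M\ovt N)$; this is the technical heart where $\|\cdot\|_{\infty,1}$- and $\|\cdot\|_{\infty,2}$-estimates and the W$^*$CMAP hypothesis must be combined carefully, exactly as flagged in the paragraph preceding the lemma.
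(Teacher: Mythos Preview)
There is a genuine gap: you invoke W$^*$CMAP of $N$, but the lemma does \emph{not} assume it. The whole point of separating Lemma~\ref{lem: dichotomy} from Proposition~\ref{prop: dichotomy} is that the lemma holds for arbitrary finite $N$, at the price of the weaker conclusion that $\phi$ agrees with $\psi$ only on $M\otimes_{\rm min} N$ rather than on $M\ovt N$. Your plan to tensor the biexactness maps with finite-rank c.c.\ maps $\theta_j:N\to N$ coming from W$^*$CMAP, and your identification of W$^*$CMAP as the ingredient needed for an inclusion like ``$\bS_{\X_Q}(M)\ovt N\subset \bS_{\X_{Q\ovt N}}(M\ovt N)$'', both import an assumption you do not have and aim at the wrong target.

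The paper's argument is simpler and does not touch the $N$-side at all. One first verifies directly (no hypothesis on $N$) that
\[
\bS_{\X_Q}(M)\otimes_{\rm min} N\subset \bS_{\X_{Q\ovt N}}(M\ovt N),
\]
by checking $\K_{\X_Q}^{\infty,1}(M)\otimes_{\rm alg} N\subset \K_{\X_{Q\ovt N}}^{\infty,1}(M\ovt N)$. Biexactness of $M$ relative to $Q$ gives normal u.c.p.\ maps $\theta_i:\B(L^2M)\to \bS_{\X_Q}(M)$ (factoring through matrices) with $\theta_i{}_{\mid M}\to\id_M$ in the $M$-topology; since each $\theta_i$ has finite-dimensional range up to composition, $\tilde\theta_i:=\theta_i\otimes\id_N$ is a well-defined u.c.p.\ map $\B(L^2M)\ovt N\to \bS_{\X_Q}(M)\otimes_{\rm min} N\subset \bS_{\X_{Q\ovt N}}(M\ovt N)$. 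Non proper proximality then gives the $Pz$-bimodular u.c.p.\ map $\psi:\bS_{\X_{Q\ovt N}}(M\ovt N)\to Pz$ with $\psi_{\mid M\ovt N}$ normal, and $\phi$ is any weak$^*$ limit point of $\psi\circ\tilde\theta_i$. For $x\in M\otimes_{\rm min} N$ one checks $\tilde\theta_i(x)\to x$ in the $M\ovt N$-topology, and since $\psi$ is continuous from the weak $M\ovt N$-topology to the weak operator topology on $Pz$, this yields $\phi_{\mid M\otimes_{\rm min} N}=\psi$. The W$^*$CMAP hypothesis only enters later, in the proof of Proposition~\ref{prop: dichotomy}, precisely to pass from $M\otimes_{\rm min} N$ to $M\ovt N$ via $\id\otimes\phi_n$.
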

\begin{proof}
We first check that 
	$\bS_{\X_Q}(M)\otimes_{\rm min} N\subset \bS_{\X_{Q\overline\otimes N}}(M\overline\otimes N)$,
	for which it suffices to verify that $\K_{\X_Q}(M)^{\infty,1}\otimes_{\rm alg} N\subset \K_{\X_{Q\overline\otimes N}}^{\infty,1}(M\overline\otimes N)$.
Indeed, for any $a,b,c,d\in M$, $T\in \B(L^2M)$ and $x\in N$,
	note that 
	$$aJbJ e_Q T e_Q c JdJ\otimes x =(a\otimes 1)J(b\otimes 1)J e_{Q\ovt N}(T\otimes 1)e_{Q\ovt N} (c\otimes x) J(d\otimes 1)J\in \K_{\X_{Q\ovt N}}(M\ovt N).$$
Since $\B(L^2M)\ni T\mapsto T\otimes 1\in \B(L^2M\otimes L^2N)$ is continuous from both $M$-topology to $M\ovt N$-topology and $JMJ$-topology to $J(M\ovt N)J$-topology,
we have $\K_{\X_Q} ^{\infty,1}(M)\otimes_{\rm alg} N\subset 
	\K_{\X_{Q\overline\otimes N}} ^{\infty,1}(M\overline\otimes N)$.

Denote by $\theta_i: \B(L^2M)\to \bS_{\X_Q}(M)$ a net of normal u.c.p.\ maps such that ${\theta_i}_{\mid M}\to \id_M$ in the point $M$-topology, which is given by the assumption that $M$ is biexact relative to $Q$.
Consider 
	$$\tilde \theta_i:=\theta_i\otimes \id_N: \B(L^2M)\ovt N\to \bS_{\X_Q}(M)\otimes_{\rm min} N\subset \bS_{\X_{Q\ovt N}}(M\ovt N).$$

Since $P\subset p(M\ovt N)p$ is not properly proximal relative to $Q\ovt N$, 
	there exists a nonzero central projection $z\in P$ and a $P$-bimodular u.c.p.\ map $\psi: \bS_{\X_{Q\ovt N}}(M\ovt N)\to Pz$ 
	such that $\psi_{\mid M\ovt N}$ is normal.
Denote by $\phi$ a weak$^*$ limit point of 
	$\psi\circ \tilde \theta_i : \B(L^2M)\ovt N\to Pz$.

Now we show $\phi_{\mid M\otimes_{\rm min} N}=\psi$.
First note that for any $x\in M\otimes_{\rm min}N$, we have $\tilde \theta_i(x)\to x$ in the $M\ovt N$-topology.
To see this, one checks that $(\theta_i(a)-a)\otimes b\to 0$ in the the $M\ovt N$-topology for any $a\in M$ and $b\in N$.
Moreover, $\psi$ is continuous from the weak $M\ovt N$-topology to the weak operator topology on $Pz$ 
	as $\psi$ is u.c.p.\ and $\psi_{\mid M\ovt N}$ is normal.
Altogether, for any $x\in M\otimes_{\rm min} N$ and any $\omega\in (Pz)_*$, we have
$$\lim_i \langle \psi(\tilde \theta_i(x)), \omega\rangle=\lim_i \langle \tilde \theta_i(x), \omega\circ \psi\rangle=\langle x, \omega\circ \psi\rangle,$$
i.e., $\phi(x)=\lim_i\psi(\tilde\theta_i(x))=\psi(x)$.
\end{proof}

\begin{proof}[Proof of Proposition~\ref{prop: dichotomy}]
Since $N$ has W$^*$CMAP, there exists a net of finite rank normal c.c.\ maps 
	$\phi_n: N\to N$ such that $\phi_n\to \id_N$ in point-weak$^*$
	and 
$$\tilde \phi_n:=\id\otimes \phi_n: \B(L^2M)\ovt N\to \B(L^2M)\ot_{\rm min} N.$$
Consider $\phi\circ \tilde\phi_n: \B(L^2M)\ovt N\to Pz$, where $\phi:\B(L^2M)\ovt N\to Pz$ is from Lemma~\ref{lem: dichotomy},
denote by $\Phi$ a point weak$^*$ limit of $\phi\circ \tilde\phi_n$.
For any $x\in M\ovt N$ and $\varphi\in (Pz)_*$, since $\tilde\phi_n$ maps $M\ovt N$ to $M\otimes_{\rm min} N$ and
	$\tilde \phi_n\to \id_{M\ovt N}$ in the point weak$^*$ topology, we have
$$\langle \Phi(x),\varphi \rangle =\lim_n \langle \phi(\tilde\phi_n(x)),\varphi \rangle =\lim_n \langle  \tilde\phi_n(x), \varphi \circ \psi \rangle =\varphi(\psi(x)),$$
i.e., $\Phi_{\mid M\ovt N}=\psi$.
Therefore $\Phi: \B(L^2M)\ovt N\to Pz$ is a conditional expectation with $\Phi_{\mid M\ovt N}$ normal,
	which shows $Pz$ is amenable relative to $N$ in $M\ovt N$ by \cite{OzPo10I}.
\end{proof}

Let $M$ be a finite von Neumann algebra and $N\subset M$ a von Neumann subalgebra.
Recall that $M$ is solid relative to $N$ if for any von Neumann subalgebra $P\subset M$ such that $P\not\prec_M N$, one has $P'\cap M$ is amenable relative to $N$ in $M$.
The following recovers \cite[Proposition 7.3]{Iso20}.

\begin{cor}
Let $M$ and $N$ be finite von Neumann algebras.
If $M$ is biexact relative to a von Neumann subalgebra $Q\subset M$ 
	and $N$ has W$^*$CMAP, then $M\overline\otimes N$ is solid relative to $Q\ovt N$.
\end{cor}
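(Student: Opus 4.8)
The plan is to run the standard solidity bootstrap, using Proposition~\ref{prop: dichotomy} as the dichotomy engine and feeding it the elementary fact that the relative commutant of a subalgebra that does not intertwine into $Q\ovt N$ can never be properly proximal relative to $Q\ovt N$. So fix a von Neumann subalgebra $P\subseteq M\ovt N$ with $P\not\prec_{M\ovt N}Q\ovt N$; we must show $B:=P'\cap(M\ovt N)$ is amenable relative to $Q\ovt N$ in $M\ovt N$. Since relative amenability is closed under taking suprema of the supporting central projections \cite{OzPo10I}, there is a largest projection $z_0\in\cZ(B)$ with $Bz_0$ amenable relative to $Q\ovt N$ in $M\ovt N$, and it suffices to rule out $z:=z_0^{\perp}\neq 0$.

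The main step is the lemma: if $P_0\subseteq q(M\ovt N)q$ satisfies $P_0\not\prec_{M\ovt N}Q\ovt N$, then $P_0'\cap q(M\ovt N)q$ is not properly proximal relative to $Q\ovt N$ in $M\ovt N$. To prove it, pick $u_n\in\cU(P_0)$ with $\|E_{Q\ovt N}(a u_n b)\|_2\to 0$ for all $a,b\in M\ovt N$, and let $\varphi$ be a weak$^{*}$ cluster point of the states $T\mapsto \tau(q)^{-1}\langle T\widehat{u_n},\widehat{u_n}\rangle$ on $\B(L^2(M\ovt N))$. On $q(M\ovt N)q$ each of these states is the normalized trace, so $\varphi_{\mid q(M\ovt N)q}$ is normal; since $u_n\in(M\ovt N)_1$, the states are bounded, uniformly in $n$, by constant multiples of $\|\cdot\|_{\infty,2}$ and of $\|\cdot\|_{\infty,1}$, hence so is $\varphi$. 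A direct estimate using $\|E_{Q\ovt N}(a u_n b)\|_2\to 0$ and the description of $\X_{Q\ovt N}$ as the hereditary ${\rm C}^{*}$-algebra generated by operators carrying $e_{Q\ovt N}$ shows that $\varphi$ annihilates $\K_{\X_{Q\ovt N}}(M\ovt N)$, hence its $\|\cdot\|_{\infty,1}$-closure $\K^{\infty,1}_{\X_{Q\ovt N}}(M\ovt N)$. Finally, for $x\in P_0'\cap q(M\ovt N)q$ and $T\in q\bS_{\X_{Q\ovt N}}(M\ovt N)q$, using that $u_n$ commutes with $x$ one finds that each approximating state sends $[x,T]$ to the negative of its value on $[T,Jx^{*}J]$, and the latter commutator lies in $\K^{\infty,1}_{\X_{Q\ovt N}}(M\ovt N)$ because $Jx^{*}J\in(M\ovt N)'$; hence $\varphi([x,T])=0$. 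Thus $\varphi$ restricts to a $(P_0'\cap q(M\ovt N)q)$-central state on $q\bS_{\X_{Q\ovt N}}(M\ovt N)q$ that is normal on $q(M\ovt N)q$, which is exactly the desired failure of proper proximality.

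Granting the lemma, I would conclude as follows. From $P\not\prec_{M\ovt N}Q\ovt N$ one checks, by multiplying a witnessing sequence of unitaries of $P$ by $z$, that $Pz\not\prec_{M\ovt N}Q\ovt N$; so the lemma gives that $(Pz)'\cap z(M\ovt N)z$, and hence its subalgebra $Bz$, is not properly proximal relative to $Q\ovt N$ in $M\ovt N$. Applying Proposition~\ref{prop: dichotomy} to $Bz\subseteq z(M\ovt N)z$, the first alternative is excluded, so there is a nonzero projection $z'\in\cZ(Bz)\subseteq\cZ(B)$ with $Bz'$ amenable relative to $N$ in $M\ovt N$; since $1\otimes N\subseteq Q\ovt N$, it follows that $Bz'$ is amenable relative to $Q\ovt N$ in $M\ovt N$, and therefore $B(z_0+z')$ is as well. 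This contradicts the maximality of $z_0$, so $z_0=1$ and $B=P'\cap(M\ovt N)$ is amenable relative to $Q\ovt N$ in $M\ovt N$, i.e.\ $M\ovt N$ is solid relative to $Q\ovt N$.

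I expect essentially all the work to be in the lemma, and within it in the verification that the cluster point $\varphi$ kills the $\|\cdot\|_{\infty,1}$-closure $\K^{\infty,1}_{\X_{Q\ovt N}}(M\ovt N)$: this is the point where the seminorm bounds coming from $u_n$ being a contraction, rather than merely an $L^2$-vector, have to be combined with the hereditary structure of $\X_{Q\ovt N}$ to control every element of the boundary algebra rather than just the generators. The maximality argument, the closure of relative amenability under suprema of central projections, and the passage between $Bz$, $Pz$ and their relative commutants are all routine.
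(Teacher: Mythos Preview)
Your proof is correct and follows essentially the same approach as the paper. Both arguments use the witnessing unitaries $u_n\in\cU(P)$ from $P\not\prec_{M\ovt N}Q\ovt N$ to produce a map on $\bS_{\X_{Q\ovt N}}(M\ovt N)$ that is ``bimodular'' over $P'\cap(M\ovt N)$, and then feed this into the machinery behind Proposition~\ref{prop: dichotomy}. The paper is slightly more direct: it takes a weak$^*$ limit of $\Ad(u_n)$ to obtain a genuine conditional expectation $\bS_{\X_{Q\ovt N}}(M\ovt N)\to P'\cap(M\ovt N)$ (citing \cite{DKEP22}), and then invokes the \emph{proof} of Proposition~\ref{prop: dichotomy} rather than the statement, which yields amenability of all of $P'\cap(M\ovt N)$ relative to $N$ in one stroke and avoids your maximality argument over central projections. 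Your route---passing through a $P'$-central state, applying Proposition~\ref{prop: dichotomy} as a black box to get amenability on a corner, and then running the supremum argument---is a bit longer but has the virtue of being more modular.
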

\begin{proof}
Suppose $P\subset M\ovt N$ is a von Neumann subalgebra such that $P\not\prec_{M\ovt N} Q\ovt N$, from which we obtain a sequence of unitary $\{u_n\}\subset \cU(P)$ with $\|E_{Q\ovt N}(x u_n y)\|_2\to 0$ for any $x,y\in M\ovt N$.
As in \cite{DKEP22}, a point weak$^*$ limit point $\phi:=\lim_n \Ad(u_n): \bS_{\X_{Q\ovt N}}(M\ovt N)\to P'\cap (M\ovt N)$
	gives a conditional expectation.
Then the proof of Proposition~\ref{prop: dichotomy} shows that  $P'\cap (M\ovt N)$ is amenable relative to $N$ in $M\ovt N$ and 
hence amenable relative to $Q\ovt N$ in $M\ovt N$.
\end{proof}
%

\section{Concentration of states on the boundary and relative amenability}
\label{sec: concentration of state}

In this section, we show a relative version of the main technical result in \cite{Din22} and \cite{DKE22}.
The following lemma shows that under certain conditions, one may relate the basic construction and the small-at-infinity boundary.

\begin{lem}\label{lem: embed basic construction}
Let $\Gamma$ be a countable discrete group with subgroups $\Lambda$ and $\Sigma_i$, $1\leq i\leq n$.
Suppose $\Lambda<\Gamma$ is almost malnormal relative to the family $\{\Sigma_i\}_{i=1}^n$
	and each $\Sigma_i$ is normal.
Denote by $\X$ the $L\Gamma$-boundary piece associated with $L\Lambda$ and 
	$\Y$ the $L\Gamma$-boundary piece associated with $\{L\Sigma_i\}_{i=1}^n$.
Then there exists a u.c.p.\ map $\phi:\langle L\Gamma, e_{L\Lambda}\rangle \to q_\Y^\perp \tilde \bS_\Y(L\Gamma) q_\Y^\perp $
such that $\phi(x)=q_\Y^\perp q_\X \iota_{\rm nor}(x)$ for any $x\in L\Gamma$,
where $q_\X$ and $q_\Y$ denote the identity of $\K_\X(L\Gamma)^{\sharp *}_J$ and
	$\K_\Y(L\Gamma)^{\sharp *}_J$, respectively.
\end{lem}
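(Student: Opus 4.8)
The plan is to build $\phi$ directly from the explicit description of the basic construction $\langle L\Gamma, e_{L\Lambda}\rangle$ in terms of the transversal $\{t_k\}$ for $\Gamma/\Lambda$, using the family of projections $p_{k,\ell} = P_{t_k\Lambda t_\ell}$ from Lemma~\ref{lem: sum to identity}. Recall that $\langle L\Gamma, e_{L\Lambda}\rangle$ is spanned (as a von Neumann algebra, densely) by elements of the form $x e_{L\Lambda} y$ with $x,y\in L\Gamma$, and that, via the transversal, $L^2\langle L\Gamma, e_{L\Lambda}\rangle$ decomposes as a direct sum indexed by cosets, with $\langle L\Gamma, e_{L\Lambda}\rangle$ acting ``with finite propagation'' relative to this decomposition. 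The candidate map is
\[
\phi(T) = \sum_{k,\ell\in\N} q_\Y^\perp\,\iota_{\rm nor}(p_{k,\ell})\,\iota_{\rm nor}\!\big(\theta(T)\big)\,\iota_{\rm nor}(p_{k,\ell})\,q_\Y^\perp,
\]
where $\theta:\langle L\Gamma, e_{L\Lambda}\rangle \to \B(\ell^2\Gamma)$ is a suitable ``compression-to-the-diagonal-blocks'' map turning the basic construction into an honest bounded operator on $\ell^2\Gamma$ (so that $\iota_{\rm nor}$ makes sense), with $\theta(x)$ acting as left multiplication by $x$ when $x\in L\Gamma$. By Lemma~\ref{lem: sum to identity} the projections $q_\Y^\perp\iota_{\rm nor}(p_{k,\ell})$ are pairwise orthogonal and sum to $q_\Y^\perp q_\X$, so the sum converges and $\phi$ lands in $q_\Y^\perp q_\X\,\B(L^2L\Gamma)^{\sharp *}_J\,q_\Y^\perp q_\X$; it is automatically u.c.p.\ as a sum of mutually orthogonal compressions of a u.c.p.\ map. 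The identity $\phi(x) = q_\Y^\perp q_\X\iota_{\rm nor}(x)$ for $x\in L\Gamma$ should then follow because left multiplication by $x\in L\Gamma$ commutes with each $p_{k,\ell}$ up to an error lying in $\K_\Y(L\Gamma)$ (using almost malnormality again, exactly as in the proof of Lemma~\ref{lem: sum to identity}), so that $q_\Y^\perp\iota_{\rm nor}(p_{k,\ell})\iota_{\rm nor}(x)\iota_{\rm nor}(p_{k,\ell})q_\Y^\perp = q_\Y^\perp\iota_{\rm nor}(p_{k,\ell}x)q_\Y^\perp$ and then summing over $k,\ell$ recovers $q_\Y^\perp q_\X\iota_{\rm nor}(x)$ by Lemma~\ref{lem: sum to identity}.

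The remaining — and main — point is to verify that $\phi$ actually takes values in the small-at-infinity boundary $q_\Y^\perp\tilde\bS_\Y(L\Gamma)q_\Y^\perp$, i.e.\ that $[\phi(T), x]\in \K_\Y(L\Gamma)^{\sharp *}_J$ for every $x\in JL\Gamma J$. Here is where the ``almost malnormal relative to $\{\Sigma_i\}$'' hypothesis does its real work: right multiplication by $\rho_s$ (a generator of $JL\Gamma J$) moves the coset block $t_k\Lambda t_\ell$ to $t_k\Lambda t_\ell s$, and the overlap of this with any single block $t_k\Lambda t_{\ell'}$ is contained in $t_k(\Lambda t_\ell s t_{\ell'}^{-1}\cap \Lambda)t_{\ell'}$, which is small relative to $\{\Sigma_i\}_{i=1}^n$ whenever $\ell$ and $\ell'$ give distinct cosets — hence the corresponding ``off-diagonal'' contributions of $[\theta(T),\rho_s]$ land in the boundary piece $\K_\Y(L\Gamma)$. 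I would make this precise by writing $[\phi(T),\iota_{\rm nor}(\rho_s)]$ as a sum over pairs of blocks, splitting into the ``diagonal'' part (where $\theta$ was built to be right-$L\Lambda$-modular enough that the commutator already vanishes, using $[e_{L\Lambda},\rho_r]=0$ for $r\in\Lambda$) and the ``off-diagonal'' part (which is in $\K_\Y$), and then checking, via the approximate-unit description in Lemma~\ref{lem: identity}, that infinite sums of such terms remain in $\K_\Y(L\Gamma)^{\sharp *}_J$. The delicate part is the bookkeeping for the infinite sum: one needs that the families of commutator errors are, after cutting by $q_\Y^\perp$, summable in the appropriate weak topology, which is exactly what the orthogonality in Lemma~\ref{lem: sum to identity} buys. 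This is the step I expect to be the real obstacle, and it is presumably handled by the ``relative version of the main technical result in \cite{Din22} and \cite{DKE22}'' alluded to just before the lemma.

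Finally, I would double-check two consistency points: first, that $\theta$ can indeed be chosen so that $\iota_{\rm nor}\circ\theta$ restricted to $L\Gamma$ agrees with $\iota_{\rm nor}$ on $L\Gamma$ (it does, since on $L\Gamma$ the basic construction already acts by bounded left multiplication, so $\theta$ is the identity there), and second, that $q_\Y^\perp q_\X = q_\Y^\perp$ is \emph{not} being claimed — the target of $\phi$ as stated is $q_\Y^\perp\tilde\bS_\Y(L\Gamma)q_\Y^\perp$, so after establishing that $\phi(T)\in q_\Y^\perp q_\X\tilde\bS_\Y(L\Gamma)q_\Y^\perp q_\X$ one simply notes $q_\Y^\perp q_\X\le q_\Y^\perp$ and weakens the conclusion accordingly. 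The formula $\phi(x) = q_\Y^\perp q_\X\iota_{\rm nor}(x)$ then holds as stated for $x\in L\Gamma$, and the u.c.p.\ property and boundary-membership complete the proof.
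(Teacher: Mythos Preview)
Your proposed formula does not work, and the gap is not just bookkeeping. Two concrete problems:

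\textbf{(1) The identity on $L\Gamma$ fails.} You claim that left multiplication by $x\in L\Gamma$ commutes with each $p_{k,\ell}$ modulo $\K_\Y(L\Gamma)$. This is false: for $x=\lambda_g$ one has $\lambda_g p_{k,\ell}=p_{k',\ell}\lambda_g$ \emph{exactly}, where $k'$ is the unique index with $gt_k\Lambda=t_{k'}\Lambda$. There is no $\K_\Y$-error; the left action permutes the left-coset index. Consequently your block-diagonal compression gives
\[
\phi(\lambda_g)=\sum_{k,\ell}q_\Y^\perp\iota_{\rm nor}(p_{k,\ell}\lambda_g p_{k,\ell})q_\Y^\perp
=\sum_{\{k:\,g\in t_k\Lambda t_k^{-1}\},\ \ell}q_\Y^\perp\iota_{\rm nor}(\lambda_g p_{k,\ell}),
\]
whereas the required value $q_\Y^\perp q_\X\iota_{\rm nor}(\lambda_g)=\sum_{k,\ell}q_\Y^\perp\iota_{\rm nor}(\lambda_g p_{k,\ell})$ runs over \emph{all} $k$. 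These differ unless $\Lambda$ is normal. Relatedly, your map annihilates the off-diagonal generators $u_ke_Nu_j^*$ (for $k\neq j$) of the basic construction, so it is not capturing the correct bimodule structure. (Also, the auxiliary map $\theta$ is a red herring: $\langle L\Gamma,e_{L\Lambda}\rangle$ already sits in $\B(\ell^2\Gamma)$, so $\iota_{\rm nor}$ applies directly.)

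\textbf{(2) The missing idea.} The paper does \emph{not} build $\phi$ as a sum of compressions. Instead it constructs a genuine $*$-homomorphism $\pi$ on the dense $*$-subalgebra $A_0={\rm sp}\{u_k a e_N u_j^*\}$ by replacing $e_N$ with the projection
\[
e=\sum_{r}\iota_{\rm nor}(Ju_rJ\,e_N\,Ju_r^*J)\in\B(L^2M)^{\sharp *}_J,
\]
i.e.\ the sum over the \emph{right}-coset projections $P_{\Lambda t_r}$. Almost malnormality is used to show that $q_\Y^\perp e\,\iota_{\rm nor}(x)\,e\,q_\Y^\perp=q_\Y^\perp\iota_{\rm nor}(E_{L\Lambda}(x))e\,q_\Y^\perp$, which is exactly what makes $\pi$ multiplicative. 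The crucial point you are missing is that $e$ is \emph{exactly} $\Ad(J\lambda_sJ)$-invariant for every $s\in\Gamma$ (conjugation by $Ju_sJ$ simply permutes the right cosets), so $[\pi(A_0),\iota_{\rm nor}(JMJ)]=0$ on the nose --- no ``off-diagonal in $\K_\Y$'' argument is needed for the boundary membership. The map $\phi$ is then the weak$^*$ limit of $\pi\circ\psi_n$, where $\psi_n$ is the finite-rank compression by $\sum_{k\le n}u_ke_Nu_k^*$; the formula $\phi(x)=q_\Y^\perp q_\X\iota_{\rm nor}(x)$ comes out of summing $q_\Y^\perp\iota_{\rm nor}(u_k)e\,\iota_{\rm nor}(u_k^*)$ over $k$ and invoking Lemma~\ref{lem: sum to identity}.
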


\begin{proof}
We follow the proof in \cite{Din22} closely.
Denote by $\{t_k\}_{k\geq 1} \subset \Gamma$ a transversal of $\Gamma/\Lambda$.
Set $M=L\Gamma$, $N=L\Lambda$ and $u_k=\lambda_{t_k}\in \cU(M)$.

For each $n\geq 1$, consider the c.p.\ map $\psi_n: \langle M, e_N\rangle \to \langle M, e_N\rangle$
	given by 
	$$\psi_n(x)=(\sum_{k\leq n} u_k e_N u_k^*)x(\sum_{j\leq n} u_j e_N u_j^*).$$
Notice that the image of $\psi_n$ lies in the $*$-algebra $A_0:={\rm sp}\{u_k a e_N u_j^*\mid a\in N,\ j,k\geq 1\}$.

By \cite[Lemma 3.4]{Din22} , we have $\{\iota_{\rm nor} (Ju_kJ e_N Ju_k^* J)\}_{k\geq 1}\subset \B(L^2M)^{\sharp *}_J$
	is a family of pairwise orthogonal projections 
	and let $e=\sum_{k\geq 1} \iota_{\rm nor} (Ju_kJ e_N Ju_k^* J)$.
Set $\pi: A_0\to q_\Y^\perp \B(L^2M) ^{\sharp *}_J q_\Y^\perp$ to be the linear map satisfying
	$\pi(u_k a e_N u_j^*)=q_\Y^\perp \iota_{\rm nor}(u_ka) e\iota_{\rm nor}(u_j^*) q_\Y^\perp$.

We first show that $\pi$ is a $*$-homomorphism. 
Indeed, we claim that for any $x\in M$, we have
\begin{equation}\label{equa: homomorphism}
q_\Y^\perp e \iota_{\rm nor}(x) e q_\Y^\perp =q_\Y^\perp \iota_{\rm nor}(E_N(x))e q_\Y^\perp.
\end{equation}
By normality of $\iota_{\rm nor}$ and $E_N$, it suffices to check this for $x\in \C\Gamma$.
Compute 
\[
\begin{aligned}
q_\Y^\perp e \iota_{\rm nor}(x) e q_\Y^\perp
	& = \Ad(q_\Y^\perp)( \sum_{k,j}\iota_{\rm nor}(  (Ju_k J e_N Ju_k^* J) x (Ju_jJ e_N Ju_j^*J))\\
	& =\Ad (q_\Y^\perp)(\iota_{\rm nor}(E_N(x))e + 
	\Ad (q_\Y^\perp)(\sum_{k\neq j} \iota_{\rm nor}((Ju_kJ e_N Ju_k^*J) x(Ju_jJ e_N Ju_j^*J))).
\end{aligned}
\]
Note that for $x=\lambda_t\in \C\Gamma$, one has 
	$(Ju_k J e_N Ju_k^* J) x (Ju_jJ e_N Ju_j^*J) = \lambda_t P_{t^{-1}\Lambda t_k\cap \Lambda t_j}$
	which is in $\K_\Y(M)$ if $k\neq j$ since $\Lambda$ is almost malnormal relative to $\{\Sigma_i\}_{i=1}^n$
	and hence the (\ref{equa: homomorphism}) follows.
Here, for a subset $S\subset \Gamma$, $P_S$ denotes the orthogonal projection from $\ell^2\Gamma$ to $\overline{{\rm sp}}\{\delta_g\in g\in S\}$.

Moreover, we have that $[e, q_\Y]=0$.
To see this, recall from Lemma~\ref{lem: identity} that $q_\Y=\lim_F \iota_{\rm nor}(e_F)$, where $\{e_F\}\in \K_\Y(M)$ is an approximate unit of the form that $e_F$ is the orthogonal projection on to ${\rm sp}\{\delta_g\mid g\in F(\cup_{i=1}^n \Sigma_i) F\}$ for any finite subset $F\subset \Gamma$.
Since $Ju_k Je_N Ju_k^*J$ is the projection on to ${\rm sp}\{\delta_g\mid g\in \Lambda t_k\}$,
	we have $[\iota_{\rm nor}(Ju_k Je_N Ju_k^*J), q_\Y]=0$ by Lemma~\ref{lem: commuting projections}
	and hence $[e, q_\Y]=0$ as well.
Together with (\ref{equa: homomorphism}), it then follows that $\pi$ is a homomorphism.

We then show $\pi$ is a contraction and thus $\pi$ extends to the ${\rm C}^*$-algebra
	 $A:=\overline {A_0}^{\|\cdot\|}$.
To this end, it suffices to show that for any $\sum_{i=1}^d u_{k_i} a_i e_N u_{j_i}^*\in A_0$
	with $a_i\in N$ and $k_i, j_i\geq 1$, and unit vectors $\xi,\eta\in \cH$
	we have $|\langle \pi(\sum_{i=1}^d u_{k_i} a_i e_N u_{j_i}^*) \xi, \eta\rangle | \leq \|\sum_{i=1}^d u_{k_i} a_i e_N u_{j_i}^*\|$,
	where $\cH$ is the Hilbert space on which $\B(L^2M) ^{\sharp *}_J$ is represented.

For each $k\in \N$, set $Q_k= q_\Y^\perp \sum_{i=1}^d \iota_{\rm nor}(P_{t_{j_i}\Lambda t_k^{-1}})$ and $R_k=q_\Y^\perp \sum_{i=1}^d \iota_{\rm nor}(P_{t_{k_i}\Lambda t_k^{-1}})$.
Note that $\{Q_k\}$ and $\{R_k\}$ are two families of pairwise orthogonal projections as for $k\neq r$ we have $Q_k Q_r=q_\Y^\perp \sum_{i, j=1}^d \iota_{\rm nor}(P_{t_{j_i}\Lambda t_k^{-1}\cap t_{j_j}\Lambda t_r^{-1}})=0$
	and similarly $R_k R_r=0$.
Moreover, observe that 
\[\begin{aligned}
\sum_{i=1}^d \iota_{\rm nor}(e_N u_{j_i}^* Ju_k^*J) Q_k= q_\Y^\perp \sum_{i=1}^d \iota_{\rm nor}(e_N u_{j_i}^* Ju_kJ),\ 
\sum_{i=1}^d \iota_{\rm nor} (e_N u_{k_i}^* Ju_k^* J) R_k=q_\Y^\perp \sum_{i=1}^d \iota_{\rm nor} (e_N u_{k_i}^* Ju_k^* J).
\end{aligned}\]
Thus we compute
\[\begin{aligned}
|\langle \pi(\sum_{i=1}^d u_{k_i} a_i e_N u_{j_i}^*) \xi, \eta\rangle | 
	&\leq \sum_{k\geq 0} |\sum_{i=1}^d \langle q_\Y^\perp \iota_{\rm nor}(e_N u_{j_i}^* Ju_k^*J) \xi, q_\Y^\perp \iota_{\rm nor} (Ju_kJ u_{k_i} e_Na_i)^*\eta\rangle|\\
	&=  \sum_{k\geq 0} |\sum_{i=1}^d \langle q_\Y^\perp \iota_{\rm nor}(e_N u_{j_i}^* Ju_k^*J) Q_k \xi, q_\Y^\perp \iota_{\rm nor} (Ju_kJ u_{k_i} e_Na_i)^*R_k\eta\rangle|\\
	& \leq \sum_{k\geq 0} \|\iota_{\rm nor}( Ju_k J (\sum_{i=1}^d u_{k_i}a_i e_N u_{j_i}^*) Ju_k^*J)\|\|Q_k\xi\|\|R_k \eta\|\\
	&\leq \|\sum_{i=1}^d u_{k_i} a_i e_N u_{j_i}^*\|.
\end{aligned}\]

Next we show $[\pi(A_0),\iota_{\rm nor}(x)]=0$ for any $x\in JMJ$, and thus 
$$\psi_n\circ \pi: \langle M, e_N\rangle \to q_\Y^\perp \tilde\bS_\Y(M) q_\Y^\perp.$$
For this, it suffices to check that $\iota_{\rm nor}(Ju_s^*J) e \iota_{\rm nor}(Ju_sJ)=e$ for any $s\in \Gamma$.
Indeed, observe that $e=\sum_{k\geq 1} \iota_{\rm nor} (Ju_kJ e_N Ju_k^* J)$
	is independent of the choice of traversals of $\Gamma/\Lambda$
	and $\Gamma=\sqcup_{k\in \N} t_k \Lambda=\sqcup_{k\in \N} s t_k \Lambda$.

Finally, consider the sub-unital c.p.\ map
	 $\phi_n:=\psi_n\circ \pi: \langle M, e_N\rangle \to  q_\Y^\perp \tilde\bS_\Y(M) q_\Y^\perp$,
	and denote by $\phi$ a weak$^*$ limit point of $\phi_n$.
We claim that $\phi(x)=q_\Y^\perp q_\X \iota_{\rm nor}(x)$ for any $x\in M$.

In fact, for any $x\in M$, we have 
\[
\begin{aligned}
    \phi(x)=&\lim_{n\to\infty} \pi\Big(\sum_{0\leq k,\ell\leq n} (u_k  E_{N}( u_k^* x u_\ell)e_{N}  u_\ell ^*)\Big)
    = q_{\Y}^{\perp} \lim_{n\to\infty} \sum_{0\leq k,\ell\leq n} \iota_{\rm nor}( u_k  E_{N}( u_k^* x u_\ell) ) e\iota_{\rm nor}( u_\ell ^*)\\
	=&  q_{\Y}^{\perp} \lim_{n\to\infty} \sum_{0\leq k,\ell\leq n} \big ( \iota_{\rm nor} (u_k) e \iota_{\rm nor}(u_k^*)\big ) 
	\iota_{\rm nor}( x)\big( \iota_{\rm nor}( u_\ell )e \iota_{\rm nor}(u_\ell^*)\big),\\
\end{aligned}
\]
where the last equation follows from (\ref{equa: homomorphism}).
Finally, note that by Lemma~\ref{lem: sum to identity}, $\{p_k\}_{k\geq 0}$ is a family of pairwise orthogonal projections, where
$$p_k:=q_\Y^\perp  \iota_{\rm nor} (u_k) e \iota_{\rm nor}(u_k^*)=q_\Y^\perp \sum_{r\geq 0} \iota_{\rm nor}(Ju_r J u_k e_N u_k^* Ju_r^*J),$$
	and $\sum_{k\geq 0}p_k= \sum_{k,r\geq 0}q_\Y^\perp \iota_{\rm nor}(Ju_r J u_k e_B u_k^* Ju_r^*J)=q_\Y^\perp q_\X$.
Therefore, we conclude that $\phi(x)=q_\Y^\perp q_\X \iota_{\rm nor}(x)$, as desired.
\end{proof}

\begin{prop}\label{prop: non prop prox to rel amen}
Let $\Gamma$ be a countable discrete group with two families of subgroups $\{\Lambda_i\}_{i=1}^n$ and $\{\Sigma_i\}_{i=1}^n$ such that each $\Lambda_i$ is almost malnormal relative to $\{\Sigma_j\}_{j=1}^n$
	and and each $\Sigma_i$ is normal.
Set $M=L\Gamma$ and denote by $\X$ and $\Y$ the $M$-boundary pieces associated with $\{L\Lambda_i\}_{i=1}^n$ and $\{L\Sigma_i\}_{i=1}^n$, respectively.

Suppose $N\subset p M p$, for some nonzero $p\in \cP(M)$, is a von Neumann subalgebra that has no amenable direct summand,
	such that $N$ is properly proximal relative to $\{L\Lambda_i\}_{i=1}^n$ in $M$.

If there exists an $N$-central state $\varphi: \tilde \bS_\Y(M)\to \C$ such that $\varphi_{\mid pMp}$ is a faithful normal state,
%
	then there exists a partition of unity $\{p_i\}_{i=1}^n\subset \cP(\cZ(N'\cap p Mp))$
	such that $Np_i$ is amenable relative to $L\Lambda_i$ in $M$ for each $1\leq i\leq n$.
\end{prop}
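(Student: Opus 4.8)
The plan is to combine the embedding of the basic construction into the small-at-infinity boundary (Lemma~\ref{lem: embed basic construction}) with the properly proximal hypothesis on $N$, applied one $\Lambda_i$ at a time, and then patch the resulting relative-amenability statements together using a maximality argument.

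First I would fix $i$ and consider the boundary piece $\X_i$ associated with $L\Lambda_i$. Since $\Lambda_i$ is almost malnormal relative to $\{\Sigma_j\}_{j=1}^n$ and each $\Sigma_j$ is normal, Lemma~\ref{lem: embed basic construction} provides a u.c.p.\ map $\phi_i:\langle M, e_{L\Lambda_i}\rangle \to q_\Y^\perp\tilde\bS_\Y(M)q_\Y^\perp$ with $\phi_i(x)=q_\Y^\perp q_{\X_i}\iota_{\rm nor}(x)$ for $x\in M$. Now compose the $N$-central state $\varphi$ (after passing to $q_\Y^\perp$; note $\varphi$ restricted to $M$ is faithful normal, so the central support considerations work out) with $\phi_i$ to get an $N$-central positive functional $\varphi\circ\phi_i$ on $\langle M, e_{L\Lambda_i}\rangle$; on $pMp$ it agrees (up to the scalar $\varphi(q_\Y^\perp q_{\X_i})$ and the normal functional $\varphi_{|pMp}$) with a normal functional. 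By the usual reformulation of relative amenability via states on the basic construction \cite{OzPo10I}, this exhibits a nonzero central projection $z_i\in\cZ(N)$ such that $Nz_i$ is amenable relative to $L\Lambda_i$ in $M$ — provided $\varphi(q_\Y^\perp q_{\X_i})>0$ on the relevant corner. The key combinatorial input here is Lemma~\ref{lem: sum to identity}: $\sum_{k,\ell} q_\Y^\perp\iota_{\rm nor}(p_{k,\ell}^{(i)}) = q_\Y^\perp q_{\X_i}$, which together with the fact that $N$ is properly proximal relative to $\{L\Lambda_i\}_{i=1}^n$ forces $\varphi$ not to vanish on $q_\Y^\perp q_{\X_i}$ for every nonzero central piece — otherwise $N$ would have an $N$-central state on $p\bS(M)p$ with the restriction to $pMp$ normal, contradicting proper proximality (after accounting for the fact that $N$ has no amenable direct summand). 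I would make this precise by arguing on an arbitrary nonzero central projection $z\in\cZ(N'\cap pMp)$: either $\varphi$ restricted appropriately detects $q_\Y^\perp q_{\X_i}z$ for some $i$, yielding relative amenability of a corner of $Nz$ over $L\Lambda_i$; or it vanishes against all $q_{\X_i}$, whence the $N$-centrality and faithfulness of $\varphi$ would push a central state onto $\bS(M)$ (not merely $\bS_\Y(M)$) normal on $pMp$, contradicting proper proximality of $N$ relative to $\{L\Lambda_i\}$.

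Then I would run a maximality/exhaustion argument: let $p_i\in\cZ(N'\cap pMp)$ be a maximal projection with $Np_i$ amenable relative to $L\Lambda_i$ (maximality makes sense since relative amenability is closed under taking suprema of such projections, cf.\ \cite{OzPo10I}). If $p_1\vee\cdots\vee p_n\neq p$, apply the previous paragraph to the complement $z=p-\bigvee_i p_i$ to find a nonzero subprojection of $z$ under which $N$ is amenable relative to some $L\Lambda_i$, contradicting maximality of $p_i$. Hence $\bigvee_i p_i=p$, and after disjointifying (replacing $p_i$ by $p_i - \bigvee_{j<i}p_j$, which only shrinks toward still-relative-amenable corners) we obtain the desired partition of unity $\{p_i\}_{i=1}^n\subset\cP(\cZ(N'\cap pMp))$ with $Np_i$ amenable relative to $L\Lambda_i$ in $M$.

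The main obstacle I expect is the bookkeeping around the normal biduals: controlling how $\varphi$ interacts with the projections $q_\Y$, $q_{\X_i}$ and their products, and verifying that the composition $\varphi\circ\phi_i$ genuinely restricts to a normal functional on $pMp$ (up to scalar) so that the $\langle M, e_{L\Lambda_i}\rangle$-state criterion for relative amenability applies verbatim. The delicate point is that one must first arrange, using the no-amenable-direct-summand hypothesis, that on each nonzero central $z$ the state $\varphi$ cannot be supported entirely on $q_\Y^{}$ (the "escape to the $\Sigma_i$-boundary" part) nor entirely off all the $q_{\X_i}$; the proper proximality of $N$ relative to $\{L\Lambda_i\}$ is exactly what rules out the bad alternative, but translating it into a statement about $\tilde\bS_\Y(M)$ versus $\tilde\bS(M)$ requires the inclusion $\K_\Y(M)\subset\K(M)$ and a limiting argument of the kind already used in Lemma~\ref{lem: identity} and Lemma~\ref{lem: sum to identity}.
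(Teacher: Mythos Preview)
Your proposal is essentially correct and uses the same core ingredients as the paper: Lemma~\ref{lem: embed basic construction} to map $\langle M,e_{L\Lambda_i}\rangle$ into $q_\Y^\perp\tilde\bS_\Y(M)q_\Y^\perp$, the no-amenable-direct-summand hypothesis to rule out $\varphi$ living on $q_\Y$, and proper proximality relative to $\{L\Lambda_i\}$ to rule out $\varphi$ living off $q_\X$. The paper, however, organizes the endgame more directly than your maximality/exhaustion scheme. It first proves globally that $\varphi(q_\Y)=0$ (else $T\mapsto q_\Y\iota_{\rm nor}(T)q_\Y$ composed with $\varphi$ gives an $N$-central state on $\B(L^2M)$, forcing an amenable summand) and $\varphi(q_\X^\perp)=0$ (else restricting $\varphi$ to $q_\X^\perp\tilde\bS_\X(M)q_\X^\perp\subset (JMJ)'\cap\B(L^2M)^{\sharp*}_J\subset\tilde\bS_\Y(M)$ and pulling back via $\iota_{\rm nor}$ produces an $N$-central state on $p\bS_\X(M)p$ normal on $pMp$, contradicting proper proximality relative to $\X$). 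Hence $\varphi(q_\Y^\perp q_\X)=1$, and since $q_\X=\bigvee_i q_i$ with the $q_i$ pairwise commuting, some $\varphi(q_\Y^\perp q_i q_\Y^\perp)>0$. Then $p_i$ is taken directly as the support of $\psi_i=\varphi\circ\phi_i$ on $\cZ(N'\cap pMp)$, and $\bigvee p_i=p$ follows in one line from $\varphi(q_ip_i^\perp)=0$ and faithfulness of $\varphi_{|pMp}$. Your maximality argument reaches the same conclusion but is more roundabout; the paper's direct computation avoids having to re-run the argument on each complementary corner.

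One imprecision to fix: the mechanism you describe for ruling out the ``bad alternative'' is not quite right. You write that it ``requires the inclusion $\K_\Y(M)\subset\K(M)$'', but that inclusion goes the wrong way (and is irrelevant here). The actual point is the chain $q_\X^\perp\tilde\bS_\X(M)q_\X^\perp\subset (JMJ)'\subset\tilde\bS_\Y(M)$: cutting by $q_\X^\perp$ annihilates the commutators with $JMJ$ (they lie under $q_\X$), so one lands in $(JMJ)'$, which sits in $\tilde\bS_\Y(M)$ trivially. Composing with $\iota_{\rm nor}:\bS_\X(M)\to\tilde\bS_\X(M)$ then yields an $N$-central state on $p\bS_\X(M)p$ (not on $p\bS(M)p$ as you wrote) normal on $pMp$, which is exactly what proper proximality of $N$ relative to $\X$ forbids. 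Also, Lemma~\ref{lem: sum to identity} is not invoked in this proof; it is used inside the proof of Lemma~\ref{lem: embed basic construction}, whose conclusion you are already citing.
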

\begin{proof}


Denote by $q_\X$ and $q_\Y$ the identities of $\K_\X(M) ^{\sharp *} _J$ and $\K_\Y(M) ^{\sharp *} _J$, respectively.

Note that $\varphi(q_{\Y})=0$.
Indeed, if this is not the case, we may then consider the $M$-bimodular u.c.p.\ map
	$$\B(L^2M)\ni T \mapsto q_\Y \iota_{\rm nor}(T)q_\Y\in q_\Y \B(L^2M)^{\sharp *}_J q_\Y.$$
Since $\K_{\Y}(M)\subset \B(L^2M)$ is a hereditary ${\rm C}^*$-algebra with $M$ contained in its multiplier algebra,
	we see that $q_{\Y} \B(L^2M)^{\sharp *}_J q_{\Y}= \K_{\Y}(M)^{\sharp *}_J$ and $[q_{\Y}, M]=0$.
The non-vanishing of $\varphi$ on $q_{\Y}$ then yields an $N$-central state on $\B(L^2M)$ 
	which entails that $N$ has an amenable direct summand.

On the other hand, notice that $q_\X\in \tilde \bS_\Y(M)$ as $[q_\X, JMJ]=0$, 
	and we have $\varphi(q_\X^\perp)=0$ since $N$ is properly proximal relative to $\X$.
In fact, it is clear that
 $$q_\X^\perp \tilde \bS_\X(M) q_\X^\perp \subset \B(L^2M)^{\sharp *}_J\cap (JMJ)'
	\subset \tilde \bS_{\Y}(M)$$
	and hence restricting $\varphi$ to $q_\X^\perp \tilde \bS_\X(M) q_\X^\perp$ gives us an $N$-central state on $\tilde \bS_\X(M)$.
As $\iota_{\rm nor}(p\bS_\X(M)p)\subset \tilde \bS_\X(M)$, we yields an $N$-central state on $p\bS_\X(M)p$ that restricts to a normal state on $pMp$, contradicting the assumption
	that $N$ is properly proximal relative to $\X$ in $M$.

From the above discussion, we conclude that $\varphi(q_{\Y}^\perp q_\X )=1$.
Furthermore, if we denote by $\X_i$ the boundary piece associated with $L\Lambda_i$ and 
	by $q_i$ the identity of $\K_{\X_i}(M)^{\sharp *}_J$,
	we then have $\varphi(q_\Y^\perp q_i q_\Y^\perp)> 0$ for some $1\leq i\leq n$.
This is a consequence of the facts that $q_\X=\vee_{i=1}^n q_i$ and $\{q_i\}_{i=1}^n$ are pairwise commuting
	by \cite[Lemma 3.10]{DKE22}.

By Lemma~\ref{lem: embed basic construction}, we have an $M$-bimodular map $\phi: \langle M, e_{L\Lambda_i}\rangle \to 
	q_\Y^\perp \tilde\bS_\Y(M) q_\Y^\perp $ for each $i$.
Composing with $\psi_i:=\varphi( \cdot )/\varphi(q_\Y^\perp q_i q_\Y ^\perp)$ yields an $N$-central state which is normal on $M$
	and hence we may find some projection $p_i\in \cZ(N'\cap pMp)$
	such that $N p_i$ is amenable relative to $L\Lambda_i$ in $M$,
	where $p_i$ is the support of $\psi_i$ on $\cZ(N'\cap pMp)$.

Lastly, we claim that $\vee p_i=p$.
Indeed, observe that $\varphi(q_ip_i^\perp )=0$ and $[p_i q_i, p_j q_j]=0$
	and hence $\varphi(\vee p_i)\geq \varphi(\vee p_i q_i)=\varphi(\vee q_i)=\varphi(p)$.
Since $\varphi_{\mid pMp}$ is a faithful normal state, it follows that $\vee p_i=p$.
\end{proof}

\section{Some properties of the comultiplication map}\label{sec: comultiplication}
The comultiplication map $\Delta$ was first considered in \cite{PoVa10a} and has been an indispensable tool to obtain superrigidity results (e.g. \cite{Ioa11,IoPoVa13}).
Since our proof of Theorem~\ref{thm: recover Bernoulli} follows the strategy of \cite{IoPoVa13} closely, 
	we collect some properties of the comultiplication map in relation to proper proximality in this section.

Recall that if a von Neumann algebra $M$ is isomorphic to $L\Lambda$ for some group $\Lambda$,
	then the associated comultiplication is given by
	$$\Delta: M\ni  u_t\to  u_t\otimes u_t\in M\ovt M,$$
	for $t\in \Lambda$,
	where $u_t\in L\Lambda$ is the canonical unitary corresponding to $t\in \Lambda$.

Observe that the comultiplication extends to an isometry  
	$V:L^2M\ni \delta_t\mapsto \delta_t\otimes \delta_t \in L^2M\otimes L^2M$,
	and we denote by $\phi=\Ad(V): \B(L^2M\ovt L^2M)\to \B(L^2M)$ the corresponding u.c.p.\ map.

\begin{lem}\label{lem: comultiplication}
Let $M\cong  L\Lambda$, $\Delta$ and $\phi$ be as above.
Then the following are true.
\begin{enumerate}
\item We have $\phi(x)=(\Delta^{-1} \circ  E_{\Delta(M)})(x)$
	and $\phi(JxJ)=J(\Delta^{-1} \circ  E_{\Delta(M)})(x)J$
	for any $x\in M\ovt M$,
	where $E_{\Delta(M)}: M\ovt M\to \Delta(M)$ is the normal conditional expectation.
	\label{item: image}
\item Both $\phi(\K(L^2M)\mot \B(L^2M))$ and $\phi(\B(L^2M)\mot\K(L^2M))$ are in $\K(L^2M)$. \label{item: compact image}
\item The map $\phi$ is continuous in $\|\cdot\|_{\infty,1}$ and $\|\cdot\|_{\infty,2}$. \label{item: infty-1 continuity}
\end{enumerate}
\end{lem}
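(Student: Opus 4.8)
The plan is to compute everything explicitly on the canonical orthonormal basis $\{\delta_t\}_{t\in\Lambda}$ of $L^2M$, using the fact that $V\delta_t=\delta_t\otimes\delta_t$ and hence $V^*(\delta_s\otimes\delta_t)=\delta_{s,t}\delta_t$. For item \eqref{item: image}, first note that $V^*V=\id_{L^2M}$ and $VV^*$ is the orthogonal projection $e$ of $L^2M\otimes L^2M$ onto $\overline{\mathrm{sp}}\{\delta_t\otimes\delta_t\mid t\in\Lambda\}$, which one checks coincides with the Jones projection $e_{\Delta(M)}$ for the conditional expectation $E_{\Delta(M)}:M\ovt M\to\Delta(M)$; indeed $\Delta(u_t)=u_t\otimes u_t$ maps $\delta_e\otimes\delta_e$ to $\delta_t\otimes\delta_t$, so $\Delta(M)\delta_e\otimes\delta_e$ has exactly this closed span. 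Moreover $V$ intertwines $\Delta(a)$ on $L^2M\otimes L^2M$ with $a$ on $L^2M$, i.e.\ $\Delta(a)V=Va$ for $a\in M$, so $V$ implements a unitary from $L^2M$ onto $e_{\Delta(M)}(L^2M\otimes L^2M)=L^2\Delta(M)$ conjugating $\Delta(a)$ to $a$. Then for $x\in M\ovt M$ and $a,b\in M$,
\[
\langle \phi(x)\widehat a,\widehat b\rangle
=\langle x V\widehat a, V\widehat b\rangle
=\langle x\,\Delta(a)\widehat{\mathbf 1}\otimes\widehat{\mathbf 1},\,\Delta(b)\widehat{\mathbf 1}\otimes\widehat{\mathbf 1}\rangle
=\langle e_{\Delta(M)}\,x\,e_{\Delta(M)}\,\Delta(a)\widehat{\mathbf 1}\otimes\widehat{\mathbf 1},\Delta(b)\widehat{\mathbf 1}\otimes\widehat{\mathbf 1}\rangle,
\]
and since $e_{\Delta(M)}x e_{\Delta(M)}=E_{\Delta(M)}(x)e_{\Delta(M)}$ this equals $\langle (\Delta^{-1}E_{\Delta(M)})(x)\widehat a,\widehat b\rangle$, giving the first formula. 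The second formula $\phi(JxJ)=J(\Delta^{-1}E_{\Delta(M)})(x)J$ follows the same way once we observe that $V$ also intertwines the right regular representation suitably: $J_{L^2M}\otimes J_{L^2M}$ restricted to $e_{\Delta(M)}(L^2M\otimes L^2M)$ corresponds under $V$ to $J_{L^2M}$, because $J(\delta_t)=\delta_{t^{-1}}$ on each factor and $\delta_{t^{-1}}\otimes\delta_{t^{-1}}=V\delta_{t^{-1}}$; so the same computation with $JxJ$ in place of $x$ produces $J(\Delta^{-1}E_{\Delta(M)})(x)J$.

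For item \eqref{item: compact image}, it suffices by density and continuity to treat rank-one operators: if $\xi,\eta\in L^2M$ and $T\in\B(L^2M)$, then $\phi((\theta_{\xi,\eta})\otimes T)=V^*(\theta_{\xi,\eta}\otimes T)V$, and we compute its action on $\widehat a=\delta_a$-type vectors. Writing $V\widehat a=\sum_t a_t\,\delta_t\otimes\delta_t$ where $a_t=\langle\widehat a,\delta_t\rangle$, we get
\[
(\theta_{\xi,\eta}\otimes T)V\widehat a=\sum_t a_t\,\langle\delta_t,\eta\rangle\,\xi\otimes T\delta_t
=\xi\otimes\Big(\sum_t a_t\overline{\langle\eta,\delta_t\rangle}\,T\delta_t\Big),
\]
and then $V^*$ of this picks out the diagonal: $\langle V^*(\xi\otimes\zeta),\delta_s\rangle=\langle\xi,\delta_s\rangle\langle\zeta,\delta_s\rangle$. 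So $\phi(\theta_{\xi,\eta}\otimes T)$ has matrix entries $\langle\xi,\delta_s\rangle\langle T\delta_t,\delta_s\rangle\overline{\langle\eta,\delta_t\rangle}$ in $s,t$; since $\xi,\eta\in\ell^2(\Lambda)$ and $T$ is bounded, this is a Hilbert--Schmidt kernel, hence $\phi(\theta_{\xi,\eta}\otimes T)$ is Hilbert--Schmidt, in particular compact. By linearity and $\|\cdot\|$-continuity of $\phi$ the image of $\K(L^2M)\mot\B(L^2M)$ lies in $\K(L^2M)$, and the other case is symmetric (or apply $J\cdot J$ together with item \eqref{item: image}).

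For item \eqref{item: infty-1 continuity}, $\|\cdot\|_{\infty,1}$-continuity is immediate from item \eqref{item: image}: for $x\in M\ovt M$ and $a,b\in(M)_1$,
\[
|\langle\phi(x)\widehat a,\widehat b\rangle|=|\langle (\Delta^{-1}E_{\Delta(M)})(x)\widehat a,\widehat b\rangle|
=|\langle x\,\Delta(a)(\widehat{\mathbf 1}\otimes\widehat{\mathbf 1}),\Delta(b)(\widehat{\mathbf 1}\otimes\widehat{\mathbf 1})\rangle|\le\|x\|_{\infty,1},
\]
since $\Delta(a),\Delta(b)\in(M\ovt M)_1$; thus $\|\phi(x)\|_{\infty,1}\le\|x\|_{\infty,1}$. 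Similarly, for the $\|\cdot\|_{\infty,2}$-norm and $a\in(M)_1$, $\|\phi(x)\widehat a\|=\|V^*xV\widehat a\|\le\|xV\widehat a\|=\|x\,\Delta(a)(\widehat{\mathbf 1}\otimes\widehat{\mathbf 1})\|\le\|x\|_{\infty,2}$ because $\Delta(a)\in(M\ovt M)_1$ and $V\widehat a=\Delta(a)(\widehat{\mathbf 1}\otimes\widehat{\mathbf 1})$ so $\|\phi(x)\|_{\infty,2}\le\|x\|_{\infty,2}$. The main obstacle is really just the bookkeeping in item \eqref{item: compact image} — identifying the resulting kernel as Hilbert--Schmidt and making sure the reduction to rank-one operators is legitimate in the minimal tensor norm — whereas items \eqref{item: image} and \eqref{item: infty-1 continuity} are formal once the identification $VV^*=e_{\Delta(M)}$ and $\Delta(a)V=Va$ is in place.
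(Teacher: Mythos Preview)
Your proof is correct. Items \eqref{item: image} and \eqref{item: infty-1 continuity} are handled essentially as in the paper (the paper simply says \eqref{item: image} is ``straightforward to check'' and does the same one-line computation for \eqref{item: infty-1 continuity}), though you spell out the identification $VV^*=e_{\Delta(M)}$ and the intertwining $\Delta(a)V=Va$ more carefully.

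For item \eqref{item: compact image} your route differs from the paper's. The paper uses the weakly-null-sequence criterion for compactness: if $\xi_n\to 0$ weakly in $L^2M$, then $\|P_F\xi_n\|\to 0$ for each finite $F\subset\Lambda$, and one computes directly $\|(P_F\otimes T)V\xi_n\|\le(\sum_{t\in F}|\alpha_{n,t}|^2)^{1/2}\to 0$, so $\phi(P_F\otimes T)$ is compact; density of $\{P_F\otimes T\}$ in $\K(L^2M)\mot\B(L^2M)$ then finishes. Your argument instead computes the matrix coefficients of $\phi(\theta_{\xi,\eta}\otimes T)$ and observes the kernel $(s,t)\mapsto \xi_s\,T_{s,t}\,\overline{\eta_t}$ is Hilbert--Schmidt because $|T_{s,t}|\le\|T\|$ gives $\sum_{s,t}|\xi_s|^2|T_{s,t}|^2|\eta_t|^2\le\|T\|^2\|\xi\|^2\|\eta\|^2$. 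This is a slightly stronger conclusion (Hilbert--Schmidt rather than just compact) obtained by a slightly more concrete computation; the paper's argument is a touch shorter since it avoids writing out matrix entries. Both approaches then close with the same norm-density argument.

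One small remark on \eqref{item: infty-1 continuity}: you phrase the $\|\cdot\|_{\infty,1}$ estimate ``for $x\in M\ovt M$'' and route through item \eqref{item: image}, but the lemma (and its later use in Lemma~\ref{lem: compact comultiplication}) needs the estimate for all $T\in\B(L^2M\otimes L^2M)$. Your own computation already gives this: dropping the intermediate step through $E_{\Delta(M)}$, one has $|\langle\phi(T)\hat a,\hat b\rangle|=|\langle TV\hat a,V\hat b\rangle|=|\langle T\widehat{\Delta(a)},\widehat{\Delta(b)}\rangle|\le\|T\|_{\infty,1}$ for every $T$, exactly as in the paper. So no change of argument is needed, only of quantifier.
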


\begin{proof}
(\ref{item: image}) is straightforward to check.

(\ref{item: compact image})
Let $\{\xi_n\}\subset (L^2M)_1$ be an sequence that converges to $0$ weakly
	and write each $\xi_n=\sum_{t\in \Lambda} \alpha_{n,t} \delta_t$.
Since $\xi_n\to 0$ weakly, for any finite subset $F\subset \Lambda$,
	we have $\lim_{n\to \infty}\|P_F\xi_n\|=\lim_{n\to \infty} (\sum_{t\in F}|\alpha_{n,t}|^2)^{1/2}=0$,
	where $P_F\in \K(\ell^2\Lambda)$ is the finite rank projection corresponding to $F$.

Then for any contraction $T\in \B(L^2M)$, we have 
$$ \| (P_F\otimes T)V\xi_n\|=\|(1\otimes T)(\sum_{t\in F} \alpha_{n,t} \delta_t\otimes \delta_t) \| \leq (\sum_{t\in F}|\alpha_{n,t}|^2)^{1/2}\to 0, $$
as $n\to \infty$.
It then follows that $\phi$ maps $\K(L^2M)\mot \B(L^2M)$ and $\B(L^2M)\mot\K(L^2M)$ to $\K(L^2M)$.

(\ref{item: infty-1 continuity})
For any $T\in \B(L^2M\ovt M)$, note that
$$ \|\phi(T)\|_{\infty, 1}=\sup_{a,b\in (M)_1} \langle TV\hat a, V\hat b\rangle =\sup_{a,b\in (M)_1}\langle T\widehat{\Delta(a)} , \widehat{\Delta(b)}\rangle\leq \|T\|_{\infty,1}.$$
The proof for $\|\cdot\|_{\infty,2}$ is similar.
\end{proof}

\begin{lem}\label{lem: compact comultiplication}
Let $\Lambda$ be a countable discrete group and $M=L\Lambda$.
Denote by $\Delta: L\Lambda\to L\Lambda\ovt L\Lambda$ the comultiplication map
	and $\X$ the $M\ovt M$-boundary piece associated with $\{M\otimes 1, 1\otimes M\}$.
Then we have $\phi: \K_\X^{\infty,1}(M\ovt M)\to \K^{\infty,1}(M)$, where $\phi$ is the u.c.p.\ map defined above.
%
%
\end{lem}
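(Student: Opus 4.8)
The plan is to pass from the full boundary piece down to its generators using the two continuity statements in Lemma~\ref{lem: comultiplication}, and then exploit the fact that the comultiplication isometry $V$ collapses the Jones projections $e_{M\otimes 1},e_{1\otimes M}$ to a rank-one projection. First I would set up the reduction. Since $\|\cdot\|_{\infty,1}\le\|\cdot\|_{\infty,2}$, the $\|\cdot\|_{\infty,2}$-closure of $\K(L^2M)$ is contained in $\K^{\infty,1}(M)$, and $\K^{\infty,1}(M)$ is itself $\|\cdot\|_{\infty,1}$-closed. On the other side $\K_\X(M\ovt M)\subseteq\K_\X^L(M\ovt M)$, the latter being the $\|\cdot\|_{\infty,2}$-closure of $\B(L^2(M\ovt M))\X$, and $\K_\X^{\infty,1}(M\ovt M)$ is the $\|\cdot\|_{\infty,1}$-closure of $\K_\X(M\ovt M)$. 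As $\phi$ is continuous for both $\|\cdot\|_{\infty,1}$ and $\|\cdot\|_{\infty,2}$ by Lemma~\ref{lem: comultiplication}(3), chaining these closures reduces the statement to proving $\phi(\B(L^2(M\ovt M))\cdot\X)\subseteq\K(L^2M)$.

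Next comes the key identity. Let $p\in\B(L^2M)$ be the orthogonal projection onto $\C\hat 1$. Since $V\delta_t=\delta_t\otimes\delta_t$ lies in $L^2(M\otimes 1)$ or in $L^2(1\otimes M)$ precisely when $t=e$, one checks that $e_{M\otimes 1}V=e_{1\otimes M}V=Vp$. Hence for every $T\in\B(L^2(M\ovt M))$, $x,y\in M\ovt M$ and $P\in\{M\otimes 1,1\otimes M\}$,
\[
\phi\bigl(T\,xJyJ\,e_P\bigr)=V^*T\,xJyJ\,e_PV=\bigl(V^*T\,xJyJ\,V\bigr)p=\phi\bigl(T\,xJyJ\bigr)\,p ,
\]
which has rank at most one, hence is compact; applying $*$ together with $\phi(S^*)=\phi(S)^*$, also $\phi(e_P\,Jy^*Jx^*\,T)$ is compact. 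Expanding $\X$ as the hereditary ${\rm C}^*$-algebra generated by the elements $xJyJe_P$, this handles every $\ast$-word in the generators and their adjoints except those sandwiched between a plain $x_0Jy_0J$ on the left and a plain $Jy_1'Jx_1'$ on the right; combined with norm-continuity of $\phi$ and of left multiplication, this handles all of $\B(L^2(M\ovt M))\cdot\X$ except those interior terms.

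For the remaining terms of the shape $x_0Jy_0J\,e_{P_1}\,R\,e_{P_2}\,Jy_1'Jx_1'$ I would use that $e_{M\otimes 1}=1\otimes p$ and $e_{1\otimes M}=p\otimes 1$ are elementary tensors with a rank-one (hence compact) leg, so $e_{P_1}Re_{P_2}$ lies in $\B(L^2M)\otimes_{\min}\K(L^2M)$ or in $\K(L^2M)\otimes_{\min}\B(L^2M)$ (and equals $p\otimes p$ when $P_1\ne P_2$), at which point Lemma~\ref{lem: comultiplication}(2) gives that $\phi$ sends it into $\K(L^2M)$; the hope is that the same stays true after the outer multiplications by $x_0Jy_0J$ and $Jy_1'Jx_1'$, with the general interior case following by iteration. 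The hard part will be exactly this last step: because $\X$ is only hereditary and not an ideal, Jones projections genuinely occur in the interior of words, and one must control $\phi$ of such products carefully — concretely, one must check that the slice-type operators produced by compressing $R$ between $e_{P_1}$ and $e_{P_2}$ remain compact after being hit by the outer factors from $M\ovt M$ and $J(M\ovt M)J$, since these factors do not obviously preserve $\B(L^2M)\otimes_{\min}\K(L^2M)$ or $\K(L^2M)\otimes_{\min}\B(L^2M)$ inside $\B(L^2(M\ovt M))$. Everything else is routine bookkeeping of the $\|\cdot\|_{\infty,1}$- and $\|\cdot\|_{\infty,2}$-closures and of the identity $e_{M\otimes 1}V=e_{1\otimes M}V=Vp$.
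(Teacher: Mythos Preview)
Your reduction via the $\|\cdot\|_{\infty,2}$- and $\|\cdot\|_{\infty,1}$-continuity of $\phi$ is fine, and the identity $e_{M\otimes 1}V=e_{1\otimes M}V=Vp$ is correct and is essentially the engine behind the paper's argument as well. The trouble is in your treatment of the ``interior'' terms $x_0Jy_0J\,e_{P_1}Re_{P_2}\,Jy_1'Jx_1'$. The parenthetical ``equals $p\otimes p$ when $P_1\ne P_2$'' is only true for $R=1$; for general $R\in\B(L^2M\otimes L^2M)$ the operator $(1\otimes p)R(p\otimes 1)$ need not lie in $\K(L^2M)\otimes_{\min}\B(L^2M)+\B(L^2M)\otimes_{\min}\K(L^2M)$. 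A concrete obstruction: take $R$ to be the tensor flip. Then $T:=(1\otimes p)R(p\otimes 1)$ is a partial isometry with $T^*T=p\otimes 1$ and $TT^*=1\otimes p$, and one checks $\|(p_n^\perp\otimes 1)T\|=1$ for every finite-rank projection $p_n\ge p$, so $T\notin\K\otimes_{\min}\B$; a short decomposition argument then rules out $T\in\K\otimes_{\min}\B+\B\otimes_{\min}\K$. Thus Lemma~\ref{lem: comultiplication}(\ref{item: compact image}) is not available for this step, and since the hereditary algebra $\X=\overline{C^*(G)\,\B(L^2(M\ovt M))\,C^*(G)}$ genuinely contains such ``middle'' operators, the gap is real (your later ``hard part'' is downstream of this already-broken step).

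The paper sidesteps all of this word-combinatorics with an approximate unit. Working with the $C^*_r$-generated hereditary piece $\X_0$, one observes that the projections $P_{F\tilde\Lambda F}$ (for finite $F\subset\Lambda\times\Lambda$, with $\tilde\Lambda=(\Lambda\times\{e\})\cup(\{e\}\times\Lambda)$) form an approximate unit for $\X_0$. Now $F\tilde\Lambda F$ is a finite union of sets of the form $\Lambda\times\{a\}$ and $\{b\}\times\Lambda$, and your own identity $e_PV=Vp$ applied to each translate gives $P_{F\tilde\Lambda F}V=VQ_F$ with $Q_F$ a finite-rank projection on $L^2M$. Hence for any $x\in\X_0$ one has $\phi(x)=\lim_F\phi(xP_{F\tilde\Lambda F})=\lim_F\phi(x)Q_F\in\K(L^2M)$; then $\|\cdot\|_{\infty,1}$-density of $\X_0$ in $\K_\X^{\infty,1}(M\ovt M)$ and Lemma~\ref{lem: comultiplication}(\ref{item: infty-1 continuity}) finish the job. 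This completely avoids analyzing interior occurrences of $e_P$ and the problematic mixed compressions.
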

\begin{proof}
%
Set $\tilde \Lambda=(\Lambda\times\{e\}) \cup (\{e\}\times \Lambda)\subset \Lambda\times\Lambda$
and denote by $P_S:\ell^2(\Lambda\times \Lambda)\to \overline{{\rm sp}\{\delta_t\mid t\in S\}}$
	the orthogonal projection for any subset $S\subset \Lambda\times \Lambda$.

By Lemma~\ref{lem: comultiplication}, (\ref{item: compact image}),
	we have $\phi(P_{F\tilde \Lambda F})\in \K(L^2M)$ for any finite subset $F\subset \Lambda$.
Denote by $\X_0$ the hereditary ${\rm C}^*$-subalgebra generated by 
	$\{xJyJ P_{\tilde \Lambda}\mid x,y\in C^*_r(\Lambda\times \Lambda)\}$.
By the proof of \cite[Lemma 3.5]{Din22},	
	we have $\X_0\subset \K_\X^{\infty,1}(M\ovt M)$ is dense in $\|\cdot\|_{\infty,1}$.
	
One checks that $\{P_{F\tilde \Lambda F}\}_F$ forms an approximate unit in $\X_0$
	and hence $\phi(\X_0)\subset \K(L^2M)$.
Moreover, since $\phi$ is continuous in $\|\cdot\|_{\infty,1}$ and 
	$\X_0\subset \K_\X^{\infty,1}(M\ovt M)$ is dense in $\|\cdot\|_{\infty,1}$ by Lemma~\ref{lem: comultiplication}, (\ref{item: infty-1 continuity}),
	we conclude $\phi( \K_\X^{\infty,1}(M\ovt M))\subset \K^{\infty,1}(M)$.
%
%
\end{proof}

Given a finite von Neumann algebra $M$ with a von Neumann subalgebra $N$, we denote by $E_N^M: M\to N$ the normal conditional expectation.

\begin{cor}\label{cor: non prop prox comultiplication 1}
Let $\Lambda$ be a countable discrete group and $N\subset L\Lambda=:M$
	a von Neumann subalgebra.
Denote by $\Delta: M\to M\ovt M$ the comultiplication map
	and $\X$ the $M\ovt M$-boundary piece associated with $\{M\otimes 1, 1\otimes M\}$.
Then there exists a u.c.p.\ map $\phi: \tilde \bS_\X(M\ovt M)\to \tilde \bS(N)$
	such that $\phi(x)=E_N^M\circ \Delta^{-1}\circ E_{\Delta(M)}^{M\ovt M}(x)$ for all $x\in M\ovt M$. 
	
In particular, if there exists an $N$-central state $\varphi:\tilde\bS(N)\to \C$ with $\varphi_{\mid N}=\tau_N$, 
	then $\psi:=\varphi\circ \phi: \tilde \bS_\X(M\ovt M)\to \C$ is $\Delta(N)$-central 
	with $\psi(x)=\tau(x)$ for any $x\in M\ovt M$,
	where $\tau $ is a trace on $M\ovt M$.
\end{cor}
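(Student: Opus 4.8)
The plan is to obtain $\phi$ by composing the u.c.p.\ map $\Ad(V)\colon\B(L^2M\ovt L^2M)\to\B(L^2M)$ of Lemma~\ref{lem: comultiplication} with compression by the Jones projection $e_N\in\B(L^2M)$ onto $L^2N$, and then passing to normal biduals. Concretely, set $\psi_0:=\Ad(e_N)\circ\Ad(V)=\Ad(e_NV)\colon\B(L^2M\ovt L^2M)\to\B(L^2N)$, a u.c.p.\ map; by Lemma~\ref{lem: comultiplication}(\ref{item: image}) together with the identity $e_Nae_N=E_N^M(a)e_N$ for $a\in M$, its restriction to $M\ovt M$ is precisely $E_N^M\circ\Delta^{-1}\circ E_{\Delta(M)}^{M\ovt M}$, which is the formula in the statement.

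First I would record the elementary properties of $\Ad(e_N)$: since $e_N$ commutes with $N$ and with $JNJ$, the map $\Ad(e_N)$ is $N$-bimodular and $JNJ$-bimodular; it is a contraction for $\|\cdot\|_{\infty,1}$ and for $\|\cdot\|_{\infty,2}$ (computed on $N$ versus on $M$); and being $*$-preserving, $\|\cdot\|_{\infty,2}$-continuous, and sending compact operators on $L^2M$ to compact operators on $L^2N$, it carries $\K(M)$ into $\K(N)$ and hence $\K^{\infty,1}(M)$ into $\K^{\infty,1}(N)$. Combining this with Lemma~\ref{lem: comultiplication}(\ref{item: image}) — equivalently with the relations $V^*\Delta(a)=aV^*$ and $V^*J\Delta(a)J=JaJ\,V^*$, which say that $\Ad(V)$ is $\Delta(M)$-to-$M$ bimodular and $J\Delta(M)J$-to-$JMJ$ bimodular — and with Lemma~\ref{lem: compact comultiplication}, I obtain that $\psi_0$ is $\Delta(N)$-to-$N$ bimodular and $J\Delta(N)J$-to-$JNJ$ bimodular, that $\psi_0|_{\Delta(N)}$ and $\psi_0|_{J\Delta(N)J}$ are normal, and that $\psi_0$ maps $\K_\X^{\infty,1}(M\ovt M)$ into $\K^{\infty,1}(N)$ and $\K_\X(M\ovt M)$ into $\K(N)$.

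Next I would pass to normal biduals using the functoriality developed in \cite{DKEP22,DP22}. By the bimodularity and normality over $\Delta(N)$ and $J\Delta(N)J$ just established, precomposition with $\psi_0$ sends a functional in $\B(L^2N)^{\sharp *}_J$ (i.e.\ one separately normal on $N$ and $JNJ$) to a functional on $\B(L^2M\ovt L^2M)$ that is separately normal on $\Delta(N)$ and $J\Delta(N)J$; since any state on $\B(L^2M\ovt L^2M)$ normal on $M\ovt M$ is a fortiori normal on $\Delta(N)$ and $J\Delta(N)J$ — that is, $p_{\rm nor}^{M\ovt M}\le p_{\rm nor}^{\Delta(N)}$ — this yields a weak$^*$-continuous u.c.p.\ map $\phi\colon\B(L^2M\ovt L^2M)^{\sharp *}_J\to\B(L^2N)^{\sharp *}_J$ with $\phi\circ\iota_{\rm nor}=\iota_{\rm nor}\circ\psi_0$. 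Then for $T\in\tilde\bS_\X(M\ovt M)$ and $n\in N$ one has $[T,J\Delta(n)J]\in\K_\X(M\ovt M)^{\sharp *}_J$, and since $J\Delta(N)J$ lies in the multiplicative domain of $\psi_0$ (hence of $\phi$) this gives $[\phi(T),JnJ]=\phi([T,J\Delta(n)J])\in\K(N)^{\sharp *}_J$; as $\{JnJ:n\in N\}$ exhausts $JNJ$ on $L^2N$, we conclude $\phi(\tilde\bS_\X(M\ovt M))\subset\tilde\bS(N)$, and the stated formula on $M\ovt M$ is immediate from $\phi\circ\iota_{\rm nor}=\iota_{\rm nor}\circ\psi_0$.

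For the ``in particular'' part, put $\psi:=\varphi\circ\phi$, a state on $\tilde\bS_\X(M\ovt M)$. For $n\in\cU(N)$ the $\Delta(N)$-to-$N$ bimodularity of $\phi$ gives $\phi(\Delta(n)^*T\Delta(n))=n^*\phi(T)n$, whence $\psi(\Delta(n)^*T\Delta(n))=\varphi(n^*\phi(T)n)=\varphi(\phi(T))=\psi(T)$ by $N$-centrality of $\varphi$; so $\psi$ is $\Delta(N)$-central. For $x\in M\ovt M$, using $\varphi|_N=\tau_N$, $\tau_N\circ E_N^M=\tau_M$, $\tau_{M\ovt M}\circ\Delta=\tau_M$, and trace-preservation of $E_{\Delta(M)}^{M\ovt M}$,
\[
\psi(x)=\varphi\bigl(E_N^M\Delta^{-1}E_{\Delta(M)}^{M\ovt M}(x)\bigr)=\tau_N\bigl(E_N^M\Delta^{-1}E_{\Delta(M)}^{M\ovt M}(x)\bigr)=\tau_{M\ovt M}\bigl(E_{\Delta(M)}^{M\ovt M}(x)\bigr)=\tau(x).
\]
The main obstacle is the third paragraph: making the normal-bidual functoriality precise in the face of the fact that $\psi_0$ is bimodular only over the subalgebra $\Delta(N)\subset M\ovt M$, not over $M\ovt M$ itself — which is exactly what forces one to route the argument through the support-projection inequality $p_{\rm nor}^{M\ovt M}\le p_{\rm nor}^{\Delta(N)}$. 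The remaining steps are a routine assembly of Lemmas~\ref{lem: comultiplication} and~\ref{lem: compact comultiplication} with the elementary behaviour of the Jones compression $\Ad(e_N)$.
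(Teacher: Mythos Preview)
Your overall plan coincides with the paper's: build the map from $\Ad(V)$ (the paper does $\Ad(V)$ first and then cites \cite[Lemma~3.2]{Din22} for the descent from $\tilde\bS(M)$ to $\tilde\bS(N)$, which amounts to your $\Ad(e_N)$), pass to normal biduals, and use Lemma~\ref{lem: compact comultiplication} for the compact ideal. Your first, second and fourth paragraphs are fine.

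The gap is in the third paragraph. From $\Delta(N)$-bimodularity you correctly deduce that $\omega\circ\psi_0$ is separately normal on $\Delta(N)$ and $J\Delta(N)J$ whenever $\omega\in\B(L^2N)^\sharp_J$; but to get a weak$^*$-continuous map out of $\B(L^2(M\ovt M))^{\sharp *}_J$ you need $\omega\circ\psi_0\in\B(L^2(M\ovt M))^\sharp_J$, i.e.\ separate normality on all of $M\ovt M$ and $J(M\ovt M)J$, which is \emph{stronger}. The inequality $p_{\rm nor}^{M\ovt M}\le p_{\rm nor}^{\Delta(N)}$ does not bridge this: it says $(M\ovt M)$-normality implies $\Delta(N)$-normality, not the converse. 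Concretely, your construction gives a map out of $p_{\rm nor}^{\Delta(N)}\B(L^2(M\ovt M))^{**}p_{\rm nor}^{\Delta(N)}$ which you then restrict to the sub-corner cut by $p_{\rm nor}^{M\ovt M}$; but since the support of $\omega\circ\psi_0$ is only known to lie under $p_{\rm nor}^{\Delta(N)}$, the pairing $\langle p_{\rm nor}^{M\ovt M}\pi_u(T)p_{\rm nor}^{M\ovt M},\,\omega\circ\psi_0\rangle$ need not equal $(\omega\circ\psi_0)(T)$, so the intertwining $\phi\circ\iota_{\rm nor}^{M\ovt M}=\iota_{\rm nor}^N\circ\psi_0$ is not established.

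The fix is to use the input you already have but did not exploit: by Lemma~\ref{lem: comultiplication}(\ref{item: image}) together with your observations on $\Ad(e_N)$ (and $Je_NJ=e_N$), the map $\psi_0$ restricts to \emph{normal} maps $M\ovt M\to N$ and $J(M\ovt M)J\to JNJ$, not merely to normal maps on $\Delta(N)$. This is exactly what the paper invokes (for $\Ad(V)$) to conclude directly that the adjoint sends $\B(L^2M)^\sharp_J$ into $\B(L^2(M\ovt M))^\sharp_J$, after which the bidual map and the relation $\tilde\phi\circ\iota_{\rm nor}=\iota_{\rm nor}\circ\phi$ follow. Argue with that full $M\ovt M$-normality rather than with $\Delta(N)$-bimodularity, and the rest of your write-up goes through unchanged.
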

\begin{proof}
Let $\phi: \B(L^2M\otimes L^2 M)\to \B(L^2M)$ be the u.c.p.\ map as above.
By Lemma~\ref{lem: comultiplication}, (\ref{item: image}),
	we have $\phi^*$ maps $\B(L^2M)^\sharp_J$ to $\B(L^2(M\ovt M))^\sharp_J$
	and hence by taking the bidual of $\phi$ we obtain
	$\tilde \phi: (\B(L^2(M\ovt M))^\sharp_J)^* \to (\B(L^2M)^\sharp_J)^*$,
	which satisfies $\tilde \phi (\iota_{\rm nor}(x))=\iota_{\rm nor}(\phi(x))$
	for any $x\in M\ovt M$ and $J(M\ovt M)J$.
Moreover, from Lemma~\ref{lem: compact comultiplication}, we have $\tilde \phi: \K_\X(M\ovt M)\to \K^{\infty,1}(M)\subset (\K(M)^\sharp_J)^*$.
By continuity of $\tilde \phi$, it follows that $\tilde \phi: (\K_\X(M\ovt M)^\sharp_J)^*\to (\K(M)^\sharp_J)^*$.
Thus we have 
$\tilde \phi: \tilde \bS_\X(M\ovt M)\to \tilde\bS(M)$
	with $\tilde\phi(x)=(\Delta^{-1}\circ E^{M\ovt M}_{\Delta(M)})(x)$ for any $x\in M\ovt M$.
Combining $\tilde \phi$ with \cite[Lemma 3.2]{Din22} yields a u.c.p.\ map
	$\psi: \tilde \bS_\X(M\ovt M)\to \tilde \bS(N)$ with the desired property.
\end{proof}


\begin{lem}\label{lem: comultiplication unitarily conjugate}
Let $\Lambda$ be an i.c.c.\ countable discrete group, $M=L\Lambda$
	and $N,P \subset M$  von Neumann subalgebras with $\cZ(N)=\C$.
Denote by $\Delta: M\to M\ovt M$ the comultiplication map given by $M= L\Lambda$.

Suppose there exist some nonzero projection $p\in \Delta(N)'\cap (M\ovt M)$ 
	and some partial isometry $v\in M\ovt M$ such that $v^* (\Delta(N)p) v\subset P\ovt M$ and $vv^*=p$.

Then there exists a c.c.p.\ map $\phi:\B(L^2(P\ovt M))\to \B(L^2N)$ such that

\begin{enumerate}
	\item For any $x\in P\ovt M$ 
		we have $\phi(x)=(\Delta^{-1}\circ E_{\Delta(N)}^{M\ovt M})(pvxv^*p)$.
	\item For any $y\in N$, we have $\phi(J(v^* \Delta(y) p v)J)=\tau(p) JyJ$,
			where $\tau$ is the trace on $M\ovt M$.
	\item Denoting by $\X_{P\otimes 1}$ the $P\ovt M$ boundary piece associated with $\{P\otimes 1\}$, we have $\phi$ maps $\K_{\X_{P\otimes 1}}^{\infty,1}(P\ovt M)$ to $\K^{\infty,1}(N)$.
\end{enumerate}
\end{lem}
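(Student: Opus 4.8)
The plan is to mimic the construction of the comultiplication u.c.p.\ map $\phi_0=\Ad(V)$ of Lemma~\ref{lem: comultiplication}, precomposed with the partial isometry $v$ realizing the intertwining of $\Delta(N)p$ into $P\ovt M$. Write $\iota\colon L^2N\hookrightarrow L^2M$ for the inclusion, $V\colon L^2M\to L^2(M\ovt M)$, $\hat a\mapsto\widehat{\Delta(a)}$ for the comultiplication isometry, and $L_{v^*},R_v$ for left and right multiplication by $v^*,v\in M\ovt M$, and set
$$
{\mathcal W}=L_{v^*}R_vV\iota\colon L^2N\to L^2(M\ovt M),\qquad {\mathcal W}\hat n=\widehat{v^*\Delta(n)pv}\quad(n\in N),
$$
the last equality using $pv=v$. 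Since $\tau\circ\Delta=\tau$ and $vv^*=p\le 1$, a one-line trace estimate shows ${\mathcal W}$ is a contraction, and by the hypothesis $v^*(\Delta(N)p)v\subseteq P\ovt M$ it maps $L^2N$ into $L^2(P\ovt M)$. Letting $\widetilde{(\cdot)}\colon\B(L^2(P\ovt M))\to\B(L^2(M\ovt M))$ be the (non-unital) $*$-homomorphism of extension by zero off $L^2(P\ovt M)$, define $\phi(T)={\mathcal W}^*\widetilde T\,{\mathcal W}$. This is the composition of a $*$-homomorphism with compression by a contraction, hence c.c.p.; moreover $e_{P\ovt M}{\mathcal W}={\mathcal W}$, so $\phi(T)={\mathcal W}^*T{\mathcal W}$ whenever $T$ is left or right multiplication by an element of $P\ovt M$.

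Items (1) and (2) I would obtain by direct computation. Writing $T=L_x$ and $T=J(v^*\Delta(y)pv)J$ as products of left/right multiplications and using $pv=v$, $v^*p=v^*$, $vv^*=p$ and $[p,\Delta(N)]=0$ repeatedly, one reduces to explicit expressions evaluated via $V^*\hat z=\widehat{\Delta^{-1}(E_{\Delta(M)}^{M\ovt M}(z))}$ from Lemma~\ref{lem: comultiplication}(1). For (1) this yields $\phi(x)=E_N^M\big(\Delta^{-1}(E_{\Delta(M)}^{M\ovt M}(vxv^*))\big)$, which equals $\Delta^{-1}(E_{\Delta(N)}^{M\ovt M}(pvxv^*p))$ by the tower identity $\Delta^{-1}\circ E_{\Delta(N)}^{M\ovt M}=E_N^M\circ\Delta^{-1}\circ E_{\Delta(M)}^{M\ovt M}$, which holds because $\Delta$ is a trace-preserving isomorphism carrying $N\subseteq M$ onto $\Delta(N)\subseteq\Delta(M)$. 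For (2) the same computation reduces the claim to $E_N^M\big(\Delta^{-1}(E_{\Delta(M)}^{M\ovt M}(p))\big)=\Delta^{-1}(E_{\Delta(N)}^{M\ovt M}(p))=\tau(p)$, and the last equality holds since $E_{\Delta(N)}^{M\ovt M}(p)\in\Delta(N)\cap\Delta(N)'=\cZ(\Delta(N))=\C 1$ (this is where $\cZ(N)=\C$ enters) and has trace $\tau(p)$.

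For (3), first note $\phi$ is $\|\cdot\|_{\infty,1}$-contractive, since $\langle\phi(T)\hat a,\hat b\rangle=\langle T{\mathcal W}\hat a,{\mathcal W}\hat b\rangle$ and ${\mathcal W}$ sends $(N)_1$ into $(P\ovt M)_1$. The structural point is the factorization $\phi=\big(\iota^*(\,\cdot\,)\iota\big)\circ\phi_0\circ\big(L_vR_{v^*}(\,\cdot\,)R_vL_{v^*}\big)\circ\widetilde{(\cdot)}$, with $\phi_0=\Ad(V)$ the comultiplication map of Lemma~\ref{lem: comultiplication}. Of the three outer operations: the extension by zero sends $e_{P\otimes 1}$ to $e_P\otimes e_{\C}\le 1\otimes e_{\C}=e_{M\otimes 1}$ (with $e_{\C}$ the rank-one projection onto $\C\hat 1_M$), and a short computation gives, for a generator $T=xJyJe_{P\otimes 1}$ of $\X_{P\otimes 1}$,
$$
L_vR_{v^*}\widetilde T\,R_vL_{v^*}=\big[(vx)J(vy)Je_{M\otimes 1}\big]\,(e_P\otimes e_{\C})\,\big[e_{M\otimes 1}R_vL_{v^*}\big],
$$
a bounded operator sandwiched between elements of the $M\ovt M$-boundary piece $\X$ associated to $\{M\otimes 1,1\otimes M\}$, hence in $\X$ by hereditarity; arguing as in the proof of Lemma~\ref{lem: compact comultiplication} (using $\|\cdot\|_{\infty,1}$-density of such generators in $\K_{\X_{P\otimes 1}}^{\infty,1}(P\ovt M)$ via \cite[Lemma 3.5]{Din22}) this gives $L_vR_{v^*}\widetilde{(\,\cdot\,)}R_vL_{v^*}\colon\K_{\X_{P\otimes 1}}^{\infty,1}(P\ovt M)\to\K_{\X}^{\infty,1}(M\ovt M)$. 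Next $\phi_0\colon\K_{\X}^{\infty,1}(M\ovt M)\to\K^{\infty,1}(M)$ is Lemma~\ref{lem: compact comultiplication}. Finally the compression $\iota^*(\,\cdot\,)\iota=e_N(\,\cdot\,)e_N$ is $\|\cdot\|_{\infty,1}$- and $\|\cdot\|_{\infty,2}$-contractive and carries $\K(L^2M)$ into $\K(L^2N)$, hence $\K^{\infty,1}(M)$ into $\K^{\infty,1}(N)$. Composing gives (3).

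The main obstacle is the relative-boundary-piece bookkeeping in the last paragraph: verifying carefully that extension by zero followed by conjugation by $v\in M\ovt M$ carries $\X_{P\otimes 1}$ into $\X$. The crux is the inequality $e_P\otimes e_{\C}\le e_{M\otimes 1}$ for the extension by zero of $e_{P\otimes 1}$, which lets the ``extra'' projection $e_P\otimes e_{\C}$ be absorbed into a hereditary sandwich of generators of $\X$; this is also the reason the conclusion is $\K^{\infty,1}(N)$, the full boundary piece, rather than $\K(L^2N)$.
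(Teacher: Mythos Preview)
Your construction is essentially the paper's: setting $q=v^*v$ and extending $v$ to a unitary $u$, the paper's map $V$ satisfies $V\hat a=u^*\Delta(a)u\hat q=\widehat{v^*\Delta(a)pv}={\mathcal W}\hat a$, so $\phi=\Ad(V)=\Ad({\mathcal W})$. Your verifications of (1) and (2) are correct; in particular you locate the use of $\cZ(N)=\C$ exactly where the paper does (the paper phrases it as $V_0$ having norm $\tau(q)^{1/2}$, which is the same computation).

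For (3) your strategy is the right one, but the sentence ``$\|\cdot\|_{\infty,1}$-density of such generators in $\K_{\X_{P\otimes 1}}^{\infty,1}(P\ovt M)$'' is imprecise: it is the hereditary ${\rm C}^*$-algebra $\X_{P\otimes 1}$ (not just the span of the generators $xJyJe_{P\otimes 1}$) that is dense, and your map $T\mapsto L_vR_{v^*}\widetilde T R_vL_{v^*}$ is not multiplicative, so checking it on single generators does not automatically cover products $aTb$ with $a,b$ generators. The fix is immediate with your own ingredients: since extension by zero \emph{is} a $*$-homomorphism and $\widetilde{e_{P\otimes 1}}=e_P\otimes e_\C\le e_{M\otimes 1}\in\X_{M\otimes 1}$, one gets $\widetilde{\X_{P\otimes 1}}\subset\X_{M\otimes 1}$ (generators land in $\X_{M\otimes 1}$, and hereditarity of $\X_{M\otimes 1}$ in $\B(L^2(M\ovt M))$ handles the products); then conjugation by $vJvJ$, being by multipliers of $\X_{M\otimes 1}$, preserves $\X_{M\otimes 1}$. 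From there your $\|\cdot\|_{\infty,1}$-continuity argument and Lemma~\ref{lem: compact comultiplication} finish exactly as you wrote. The paper organizes this last step slightly differently, invoking instead that $\Ad(e_{P\ovt M})\colon\K_{\X_{M\otimes 1}}^{\infty,1}(M\ovt M)\to\K_{\X_{P\otimes 1}}^{\infty,1}(P\ovt M)$ is onto; both routes are equivalent.
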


\begin{proof}
Setting $q=v^*v$, we have $pv=vq$ and hence $v^*(\Delta(N)p )v= (v^*\Delta(N) v )q$ and  $[q, v^* \Delta(N) v]=0$.
We may extend $v$ to a unitary $u\in M\ovt M$, which satisfies $uq=vq$ and $pu=pv$,
	and it follows that $v^*(\Delta(N)p)v=(u^*\Delta(N) u )q$ with $[q, u^*\Delta(N) u]=0$.

Put $N_1=u^*\Delta(N)u$ and $\tau$ to be the canonical trace on $M\ovt M$.

Since $N$ is a factor, the map $V_0: L^2(N_1,\tau)\ni \hat x\to q\hat x\in L^2(N_1 q, \tau)$ has norm $\tau(q)^{1/2}$ 
	and $\Ad(V_0): \B(L^2(N_1 q, \tau))\to \B(L^2(N_1, \tau))$
	satisfies $\Ad(V_0)(qx)=\tau(q) x$ and $\Ad(V_0)(J qxJ)=\tau(q)JxJ$ for any $x\in N_1$.
	
Consider
\[
\begin{aligned}
V: L^2N\xrightarrow{\Delta} L^2(\Delta(N),\tau)\xrightarrow{u^*Ju^*J} L^2(N_1,\tau)\xrightarrow{V_0}
L^2(N_1q,\tau)\xrightarrow{\iota} L^2( P\ovt M ,\tau),
\end{aligned}
\]
where $\iota: L^2(N_1 q,\tau)\to L^2(P\ovt M,\tau)$ is the inclusion map,
and set 
$$\phi:=\Ad(V):\B(L^2(P\ovt M,\tau))\to \B(L^2N).$$

For any $a\in N$,
one checks that $V\hat a=u^*\Delta(a)u\hat q$
	and thus $\phi(x)=\Delta^{-1}\circ E_{\Delta(N)}(u qxq u^*)$ for $x\in P\ovt M$,
	and $\phi(J(u^* \Delta (y) u q)J)=\tau(p)JyJ$ for any $y\in N$.
In particular, $\tau(p)^{-1}\phi$ is a u.c.p.\ map.
	
Now we show $\phi(\K_{\X_{P\ovt 1}}^{\infty,1}(P\ovt M))\subset \K^{\infty,1}(N)$.
Indeed, from Lemma \ref{lem: comultiplication}, (\ref{item: compact image}) and (\ref{item: infty-1 continuity}),  
	we see that if $a_i\in (N)_1$ is a sequence converging to $0$ weakly, 
		then $\|K\Delta(\hat a_i)\|\to 0$ for any $K\in \K_{\X_{M\otimes 1}}(M\ovt M)$.
It follows that 
	$$\|K u^* \Delta(a_i) u\hat q\|=\|K u^* Jqu^*J \Delta(\hat a_i)\|\to 0,$$
	as $i\to \infty$ for any $K\in \K_{\X_{M\otimes 1}}(M\ovt M)$,
	since $\K_{\X_{M\otimes 1}}(M\ovt M)$ is invariant under pre and post composition with $M\ovt M$ and $J(M\ovt M)J$.
Since $\Ad(e_{P\ovt M}): \K_{\X_{M\otimes 1}}^{\infty,1}(M\ovt M)\to \K_{\X_{P\otimes 1}}^{\infty,1}(P\ovt M)$ is onto, we have 
	$$\phi(\K_{\X_{P\otimes 1}}^{\infty,1}(P\ovt M))\subset \K^{\infty,1}(N).$$
%
\end{proof}

\begin{cor}\label{cor: non prop prox comultiplication 2}
Let $\Lambda$ be an i.c.c.\ countable discrete group, $P\subset M:=L\Lambda$ a von Neumann subalgebra and $N\subset M$ a subfactor.
Denote by $\Delta: M\to M\ovt M$ the comultiplication map.

Suppose there exist a nonzero projection $p\in \Delta(N)'\cap (M\ovt M)$ and a partial isometry $v\in M\ovt M$ such that 
	$v^*(\Delta(N)p)v\subset M\ovt P$ and $vv^*=p$.
Then there exists a c.c.p.\ map $\phi: \tilde \bS_{\X_{P\otimes 1}}(P\ovt M)\to \tilde \bS(N)$
	such that  $\phi(\iota_{\rm nor}(x))=(\iota_{\rm nor}\circ \Delta^{-1}\circ E_{\Delta(N)}^{M\ovt M})(pvxv^*p)$
	for any $x\in P\ovt M$.

In particular, if there exists an $N$-central state $\varphi:\tilde\bS(N)\to \C$ with $\varphi_{\mid N}=\tau$, 
	then $\psi:=\varphi\circ \phi: \tilde \bS_{\X_{P\otimes 1}}(P\ovt M)\to \C$ is $v^*(\Delta(N)p)v$-central 
	and $\psi(x)=\tau(qxq)$ for $x\in P\ovt M$, where $q=v^*v$.
\end{cor}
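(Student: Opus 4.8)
The plan is to imitate the proof of Corollary~\ref{cor: non prop prox comultiplication 1}, with Lemma~\ref{lem: comultiplication unitarily conjugate} playing the role that Lemma~\ref{lem: comultiplication} and Lemma~\ref{lem: compact comultiplication} play there, and to obtain the ``in particular'' part by a short direct computation. First I would reduce to the hypothesis of Lemma~\ref{lem: comultiplication unitarily conjugate}: since $\Delta(u_t)=u_t\otimes u_t$, the flip automorphism $\sigma$ of $M\ovt M$ fixes $\Delta(M)$ pointwise, hence preserves $\Delta(N)'\cap(M\ovt M)$ and carries $M\ovt P$ onto $P\ovt M$; after composing with $\sigma$ we may assume $v^*(\Delta(N)p)v\subset P\ovt M$. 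Let $\phi_0:\B(L^2(P\ovt M))\to\B(L^2N)$ be the c.c.p.\ map $\Ad(V)$ furnished by Lemma~\ref{lem: comultiplication unitarily conjugate}, so that $\phi_0(x)=(\Delta^{-1}\circ E_{\Delta(N)}^{M\ovt M})(pvxv^*p)$ for $x\in P\ovt M$, $\phi_0(J(v^*\Delta(y)pv)J)=\tau(p)JyJ$ for $y\in N$, and $\phi_0$ sends $\K_{\X_{P\otimes 1}}^{\infty,1}(P\ovt M)$ into $\K^{\infty,1}(N)$.

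The delicate step, exactly as in Corollary~\ref{cor: non prop prox comultiplication 1}, is to descend $\phi_0$ to the normal biduals. Unwinding the definition of $V$ in Lemma~\ref{lem: comultiplication unitarily conjugate}, one checks that $V$ intertwines the left and right $N$-actions on $L^2N$ with left and right multiplication by $v^*\Delta(N)pv\subset P\ovt M$, and the analogous statement holds on the $J$-side; in particular $\phi_0$ restricts to normal maps on $P\ovt M$ (valued in $N$) and on $J(P\ovt M)J$ (valued in $JNJ$). Consequently $\phi_0^*$ maps $\B(L^2N)^\sharp_J$ into $\B(L^2(P\ovt M))^\sharp_J$, so taking biduals produces $\tilde\phi_0:(\B(L^2(P\ovt M))^\sharp_J)^*\to(\B(L^2N)^\sharp_J)^*$ with $\tilde\phi_0(\iota_{\rm nor}(x))=\iota_{\rm nor}(\phi_0(x))$ for $x\in P\ovt M$ and $x\in J(P\ovt M)J$; and since $\phi_0$ sends $\K_{\X_{P\otimes 1}}^{\infty,1}(P\ovt M)$ into $\K^{\infty,1}(N)\subset(\K(N)^\sharp_J)^*$, continuity yields $\tilde\phi_0:(\K_{\X_{P\otimes 1}}(P\ovt M)^\sharp_J)^*\to(\K(N)^\sharp_J)^*$.

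Next I would check that $\tilde\phi_0$ maps $\tilde\bS_{\X_{P\otimes 1}}(P\ovt M)$ into $\tilde\bS(N)$: given $T$ in the former and $y\in N$, since $v^*\Delta(y)pv\in P\ovt M$ the commutator $[\iota_{\rm nor}(T),\iota_{\rm nor}(J(v^*\Delta(y)pv)J)]$ lies in $(\K_{\X_{P\otimes 1}}(P\ovt M)^\sharp_J)^*$ by the definition of $\tilde\bS_{\X_{P\otimes 1}}(P\ovt M)$, and applying $\tilde\phi_0$, using $\phi_0(J(v^*\Delta(y)pv)J)=\tau(p)JyJ$ together with the $J$-side bimodularity of $\phi_0$, gives $\tau(p)[\tilde\phi_0(T),JyJ]\in(\K(N)^\sharp_J)^*$, i.e.\ $\tilde\phi_0(T)\in\tilde\bS(N)$. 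Setting $\phi:=\tilde\phi_0$ then gives the desired map, with $\phi(\iota_{\rm nor}(x))=(\iota_{\rm nor}\circ\Delta^{-1}\circ E_{\Delta(N)}^{M\ovt M})(pvxv^*p)$ for $x\in P\ovt M$ by construction. Note that, unlike in Corollary~\ref{cor: non prop prox comultiplication 1}, no final composition with \cite[Lemma 3.2]{Din22} is needed, since $\phi_0$ already lands inside $N$.

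Finally, for the ``in particular'' part, given an $N$-central state $\varphi:\tilde\bS(N)\to\C$ with $\varphi_{\mid N}=\tau$, set $\psi=\varphi\circ\phi$. For $x\in P\ovt M$, trace-preservation of $\Delta$ and $E_{\Delta(N)}^{M\ovt M}$, together with $p=vv^*$, $q=v^*v$ and traciality, give
\[
\psi(x)=\tau\big(\Delta^{-1}E_{\Delta(N)}^{M\ovt M}(pvxv^*p)\big)=\tau(pvxv^*p)=\tau(qxq).
\]
For $v^*(\Delta(N)p)v$-centrality: for $u_0\in\cU(N)$ the element $w:=v^*\Delta(u_0)pv$ is a unitary in $q(M\ovt M)q$ lying in $P\ovt M$, the bimodularity of $\phi_0$ (hence of $\phi$) gives $\phi(\iota_{\rm nor}(w)^*T\iota_{\rm nor}(w))=u_0^*\phi(T)u_0$, and $N$-centrality of $\varphi$ then gives $\psi(\iota_{\rm nor}(w)^*T\iota_{\rm nor}(w))=\psi(T)$, as required. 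I expect the bidual descent of $\phi_0$ in the second paragraph to be the main technical point, since it requires carefully reading off the module structure of the isometry $V$ from Lemma~\ref{lem: comultiplication unitarily conjugate}, mirroring the way Lemma~\ref{lem: comultiplication}(\ref{item: image}) is used in Corollary~\ref{cor: non prop prox comultiplication 1}.
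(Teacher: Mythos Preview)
Your proposal is correct and follows precisely the route the paper indicates: the paper's proof is the single sentence ``This follows from Lemma~\ref{lem: comultiplication unitarily conjugate}, similar to the argument in Corollary~\ref{cor: non prop prox comultiplication 1},'' and you have faithfully unpacked that, including the flip $\sigma$ needed to pass from the hypothesis $v^*(\Delta(N)p)v\subset M\ovt P$ to the setting of Lemma~\ref{lem: comultiplication unitarily conjugate}, the bidual descent paralleling Corollary~\ref{cor: non prop prox comultiplication 1}, and the observation that no final composition with \cite[Lemma 3.2]{Din22} is needed here. One small point of phrasing: what you call ``$J$-side bimodularity of $\phi_0$'' is really bimodularity over $J(v^*\Delta(N)pv)J$ (equivalently, that these elements lie in the multiplicative domain of $\Ad(V)$ via the intertwining $wV'=V'u_0$), not over all of $J(P\ovt M)J$---but that is exactly what your commutator computation uses, so the argument is sound.
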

\begin{proof}
This follows from Lemma~\ref{lem: comultiplication unitarily conjugate}, similar to the argument in Corollary~\ref{cor: non prop prox comultiplication 1}.
\end{proof}

\section{Proof of main theorems}

Now we are ready to demonstrate rigidity of non properly proximal groups in the context of Bernoulli actions.

\begin{prop}\label{prop: conjugate Delta}
Let $\Gamma$ be a countable discrete group that is i.c.c.\ and nonamenable.
Suppose $\Gamma$ is non properly proximal with $\Lambda_{\rm cb}(\Gamma)=1$, and $L(\Z\wr\Gamma)\cong L\Lambda$ for some group $\Lambda$.
Then there exists a unitary $u\in L\Lambda$ such that $u^*\Delta(L\Gamma) u\subset L\Gamma\ovt L\Gamma$, 
	where $\Delta: L\Lambda\to L\Lambda\ovt L\Lambda$ is the comultiplication map.

\end{prop}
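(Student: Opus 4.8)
Write $M = L\Lambda \cong L(\Z\wr\Gamma)$, so $M \ovt M$ contains $\Delta(L\Gamma)$, where $\Delta$ is the comultiplication. The strategy mirrors \cite{IoPoVa13}, carried out in two intertwining steps. First, since $M\ovt M$ is biexact relative to $\{M\ot 1, 1\ot M\}$ (hence, by \cite{BrOz08, DP22}, relative to $\{M\ovt L\Gamma, L\Gamma\ovt M\}$), the subalgebra $\Delta(L\Gamma)\subset M\ovt M$ is properly proximal relative to $\{M\ovt L\Gamma, L\Gamma\ovt M\}$; this is the bidual/boundary mechanism of \cite{DP22}. Combined with Corollary~\ref{cor: non prop prox comultiplication 1} — which transports the $L\Gamma$-central state $\varphi$ on $\tilde\bS(L\Gamma)$ (available since $\Gamma$ is non properly proximal, by \cite[Lemma 8.5]{DKEP22}) to a $\Delta(L\Gamma)$-central state on $\tilde\bS_\X(M\ovt M)$ restricting to $\tau$ — I feed this into Proposition~\ref{prop: non prop prox to rel amen}. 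Here the group-theoretic input is the almost malnormality of $\Gamma$ inside $\Z\wr\Gamma$ (so $\Gamma\times(\Z\wr\Gamma)$ and $(\Z\wr\Gamma)\times\Gamma$ are almost malnormal in $(\Z\wr\Gamma)^2$ relative to the normal subgroups $\{e\}\times(\Z\wr\Gamma)$, $(\Z\wr\Gamma)\times\{e\}$). Proposition~\ref{prop: non prop prox to rel amen} then yields a partition $p_1+p_2 = 1$ in $\cZ(\Delta(L\Gamma)'\cap M\ovt M)$ with $\Delta(L\Gamma)p_i$ amenable relative to $M\ovt L\Gamma$ (resp.\ $L\Gamma\ovt M$). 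Since $\Gamma$ is i.c.c.\ and $\Delta(L\Gamma)$ is a factor, after swapping the two tensor legs I may assume $\Delta(L\Gamma)$ is amenable relative to $M\ovt L\Gamma$; moreover $\Delta(L\Gamma)$ has no amenable direct summand (it is a copy of $L\Gamma$ with $\Gamma$ nonamenable) and is not amenable relative to $1\ot M$ — the latter because $\Delta(L\Gamma)\not\prec 1\ot M$, which follows from the standard comultiplication computation $\|E_{1\ot M}(\Delta(a)x)\|_2$ being controlled via $\Delta$ and the i.c.c.\ property. Proposition~\ref{prop: intertwining from rel amen} (with $N = L\Gamma$, i.e.\ the second tensor leg, playing the role of $N$, and recalling that $\Z\wr\Gamma$ has a Bernoulli crossed-product structure $L^\infty(X^\Gamma)\rtimes\Gamma$ with $X_0 = \widehat{\Z}$) then gives a partial isometry $v$ with $v^*(\Delta(L\Gamma))v \subset L\Gamma\ovt M$ and $vv^* = 1$, i.e.\ a unitary $u_1$ with $u_1^*\Delta(L\Gamma)u_1\subset L\Gamma\ovt M$.

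For the second step, set $N := u_1^*\Delta(L\Gamma)u_1 \subset L\Gamma\ovt M$. Now $L\Gamma\ovt M = L(\Z\wr\Gamma) \ovt L\Gamma$ (after identifying and reordering) carries no relative biexactness, so I instead invoke Proposition~\ref{prop: dichotomy}: with the first leg $L(\Z\wr\Gamma)$ biexact relative to $L\Gamma$ and the second leg $L\Gamma$ having W$^*$CMAP (this is where $\Lambda_{\rm cb}(\Gamma)=1$ is used), either $N$ is properly proximal relative to $L\Gamma\ovt L\Gamma$ in $L(\Z\wr\Gamma)\ovt L\Gamma$, or some corner of $N$ is amenable relative to $1\ot L\Gamma$. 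The latter cannot happen on a nonzero central projection, since $N\cong L\Gamma$ is a nonamenable factor with no amenable direct summand and $N\not\prec 1\ot L\Gamma$ (again by the comultiplication computation combined with the conjugating unitary). So $N$ is properly proximal relative to $L\Gamma\ovt L\Gamma$. Then I apply Proposition~\ref{prop: non prop prox to rel amen} once more, using Corollary~\ref{cor: non prop prox comultiplication 2} to produce the required $N$-central state on $\tilde\bS_{\X_{L\Gamma\ot 1}}(L\Gamma\ovt M)$ that is normal on $L\Gamma\ovt M$ — this uses that $v^*\Delta(L\Gamma)v\subset L\Gamma\ovt M$ from Step 1 together with the non proper proximality of $\Gamma$. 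The relevant almost malnormality is now that of $\Gamma\times\Gamma$ inside $\Gamma\times(\Z\wr\Gamma)$ relative to $\{e\}\times\Gamma$, together with normality of that subgroup. Proposition~\ref{prop: non prop prox to rel amen} gives that $N$ is amenable relative to $L\Gamma\ovt L\Gamma$ in $L\Gamma\ovt M$, and a final application of Proposition~\ref{prop: intertwining from rel amen} (now with ambient $L\Gamma\ovt M$ presented as Bernoulli $L^\infty(X^\Gamma)\rtimes\Gamma$ tensored with $L\Gamma$) yields a unitary $u_2$ with $u_2^* N u_2 \subset L\Gamma\ovt L\Gamma$. Combining, $u := u_1 u_2$ satisfies $u^*\Delta(L\Gamma)u\subset L\Gamma\ovt L\Gamma$.

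**Main obstacle.** The delicate points are the two verifications that no corner of $\Delta(L\Gamma)$ (resp.\ $N$) is amenable relative to the "wrong" diffuse subalgebra $1\ot M$ (resp.\ $1\ot L\Gamma$) and the non-intertwining $\Delta(L\Gamma)\not\prec 1\ot M$; these are what allow the clean dichotomies of Propositions~\ref{prop: dichotomy} and \ref{prop: non prop prox to rel amen} to collapse to a single alternative, and they rely on the precise behavior of $\Delta$ together with i.c.c.\ of $\Gamma$ — I expect this bookkeeping, rather than any new idea, to be the technical heart. A secondary subtlety is keeping track of the Bernoulli structure $L(\Z\wr\Gamma) = L^\infty(\widehat{\Z}^\Gamma)\rtimes\Gamma$ so that Proposition~\ref{prop: intertwining from rel amen} applies verbatim in each of the two intertwining steps, and making sure that the roles of "$M$" and "$N$" in that proposition are assigned consistently with which tensor leg is the Bernoulli factor at each stage.
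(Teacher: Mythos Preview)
Your overall architecture matches the paper's: use biexactness of $(\Z\wr\Gamma)^2$ relative to $\{\Gamma\times(\Z\wr\Gamma),(\Z\wr\Gamma)\times\Gamma\}$ together with Corollary~\ref{cor: non prop prox comultiplication 1} to feed Proposition~\ref{prop: non prop prox to rel amen}, then pass to relative amenability, apply Proposition~\ref{prop: intertwining from rel amen}, and repeat inside $M\ovt L\Gamma$ using Proposition~\ref{prop: dichotomy} and Corollary~\ref{cor: non prop prox comultiplication 2}. The strong non-amenability and non-intertwining inputs you flag are exactly \cite[Proposition 7.2]{IoPoVa13}; your justification ``because $\Delta(L\Gamma)\not\prec 1\ot M$'' is not quite the right reason (non-intertwining does not by itself give strong non-relative-amenability), but the needed facts are all in that reference.

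There is, however, a genuine gap at the sentence ``Since $\Gamma$ is i.c.c.\ and $\Delta(L\Gamma)$ is a factor, after swapping the two tensor legs I may assume $\Delta(L\Gamma)$ is amenable relative to $M\ovt L\Gamma$.'' Proposition~\ref{prop: non prop prox to rel amen} produces a partition $p_1+p_2=1$ in $\cZ\big(\Delta(L\Gamma)'\cap (M\ovt M)\big)$, not in $\cZ(\Delta(L\Gamma))$. The factoriality of $\Delta(L\Gamma)$ says nothing about its relative commutant in $M\ovt M$, and there is no reason either $p_i$ vanishes. The flip automorphism does fix $\Delta(L\Gamma)$ pointwise and exchanges the two relative amenability targets, but it only sends $p_1$ to some other projection in that center, not to $p_2$; you cannot conclude that $\Delta(L\Gamma)$ itself (on the full unit) is amenable relative to a single $M\ovt L\Gamma$. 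Consequently Proposition~\ref{prop: intertwining from rel amen} only yields a \emph{partial} isometry $v_1$ with $v_1v_1^*=p_1$, not a unitary $u_1$, and your Step~2 then runs on a corner $v_1^*(\Delta(L\Gamma)p_1)v_1$ rather than on all of $\Delta(L\Gamma)$.

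The paper does not attempt this reduction. It treats both pieces in parallel: for each $i$ it obtains $v_i$ with $v_i^*(\Delta(L\Gamma)p_i)v_i$ inside $M\ovt L\Gamma$ (resp.\ $L\Gamma\ovt M$), reruns the second step on each corner to get $w_i$ landing in $L\Gamma\ovt L\Gamma$, and then explicitly assembles a single unitary $u=v_1w_1+v_2w_2w_2'$ by choosing $w_2'\in L\Gamma\ovt L\Gamma$ implementing the equivalence of the complementary projections $\Ad(v_1w_1)(p_1)^\perp$ and $\Ad(v_2w_2)(p_2)$ inside the factor $L\Gamma\ovt L\Gamma$. This gluing step is short but essential, and it is missing from your outline.
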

%
\begin{proof}
Set $M=L(\Z\wr\Gamma)$ and $N=\Delta(L\Gamma)$.
Since $(\Z\wr\Gamma) \times (\Z\wr\Gamma)$ is biexact relative to 
	$\{\Gamma\times (\Z\wr\Gamma), (\Z\wr\Gamma)\times \Gamma\}$ \cite[Chapter 15]{BrOz08},
	and $N$ has no amenable direct summand,
	we have $N$ is properly proximal relative to $\{L\Gamma\ovt M, M\ovt L\Gamma\}$ in $M\ovt M$ as well
	by \cite{DP22}.

Furthermore, as $\Gamma$ is non properly proximal, we have a $L\Gamma$-central state $\varphi_0: \tilde\bS(L\Gamma)\to \C$ 
	such that $\varphi_{\mid L\Gamma}=\tau$,
	which yields an $N$-central state $\varphi: \tilde \bS_\X(M\ovt M)\to \C$ with $\varphi_{\mid M\ovt M}=\tau$ by Corollary~\ref{cor: non prop prox comultiplication 1},
	where $\X$ denotes the $M\ovt M$-boundary piece associated with $\{M\otimes 1, 1\otimes M\}$.

Observe that $\Gamma<\Z\wr\Gamma$ is almost malnormal and thus $\Gamma\times (\Z^\Gamma\rtimes \Gamma)$
	(resp.\ $(\Z^\Gamma\rtimes \Gamma)\times \Gamma)$ 
	is almost malnormal relative to $\{e\}\times (\Z^\Gamma\rtimes \Gamma)$
	(resp. \ $(\Z^\Gamma\rtimes \Gamma)\times \{e\})$ in $(\Z\wr\Gamma)\times(\Z\wr\Gamma)$.
Now we are in the situation which Proposition~\ref{prop: non prop prox to rel amen} applies to, 
	with $\Lambda_1= \Gamma\times (\Z\wr\Gamma), \Lambda_2= (\Z\wr\Gamma)\times \Gamma$,
	and $\Sigma_1=\{e\}\times (\Z\wr\Gamma), \Sigma_2=(\Z\wr\Gamma)\times \{e\}$,
	and it follows that there exists a partition of unity $p_1,p_2\in \cZ(N'\cap (M\ovt M))$
	such that $Np_1$ is amenable relative to $L\Gamma\ovt M$ in $M\ovt M$,
	and $Np_2$ is amenable relative to $M\ovt L\Gamma$ in $M\ovt M$.

Since $\Gamma$ is nonamenable, we have $N$ is strongly nonamenable relative to $1\otimes M$ and $M\otimes 1$,
	$N\not\prec_{M\ovt M} L(\oplus_\Gamma\Z)\ovt M$ and $N\not\prec_{M\ovt M}M\ovt L(\oplus_\Gamma\Z)$ by \cite[Proposition 7.2]{IoPoVa13}.
It then follows from Proposition~\ref{prop: intertwining from rel amen} that we may find partial isometries $v_1, v_2\in M\ovt M$ such that 
	$v_1^*(Np_1) v_1\subset M\ovt L\Gamma$ with $v_1 v_1^*=p_1$, 
	and $v_2^*(Np_2) v_2\subset  L\Gamma \ovt M$ with $v_2 v_2^* =p_2$.

We then consider $N_1:=v_1^*(Np_1)v_1$.
One checks that since $N$ is strongly nonamenable relative to $1\otimes M$ in $M\ovt M$,
	we have $N_1$ is also strongly nonamenable relative to $1\otimes L\Gamma$ in $M\ovt L\Gamma$.	
Thus by Proposition~\ref{prop: dichotomy}, one has $N_1$ is properly proximal relative to $L\Gamma\ovt L\Gamma$.

By Corollary~\ref{cor: non prop prox comultiplication 2}, 
	we have $\psi:\tilde \bS_{\X_{1\otimes L\Gamma}}(M\ovt L\Gamma)\to \C$ is an $N_1$-central state 
	with $\psi(q_1x q_1)=\tau(p_1)^{-1} \tau(q_1xq_1)$ for $x\in M\ovt L\Gamma$,
	where $q_1=v_1^* v_1$.
We may then apply Proposition~\ref{prop: non prop prox to rel amen},
	which yields that $N_1$ is amenable relative to $L\Gamma\ovt L\Gamma$ in $M\ovt L\Gamma$.
		
As $N_1\not\prec_{M\ovt L\Gamma} L(\oplus_\Gamma\Z)\ovt L\Gamma$,
by invoking Proposition~\ref{prop: intertwining from rel amen} one more time, we obtain a partial isometry $w_1\in M\ovt L\Gamma$ 
	such that $w_1^* N_1 w_1\subset L\Gamma\ovt L\Gamma$ and $w_1 w_1^*=q_1$.

Similarly, we obtain a partial isometry $w_2\in L\Gamma\ovt M$ such that $w_2^*(v_2^* Np_2 v_2)w_2\subset L\Gamma\ovt L\Gamma$ and
	$w_2w_2^*= v_2 v_2^*$.
	
Finally, observe that $\Ad(v_2 w_2)(p_2)\in L\Gamma\ovt L\Gamma$ and $1-\Ad(v_1 w_1)(p_1)\in L\Gamma\ovt L\Gamma$
	are equivalent as $p_1+p_2=1$.
Thus we may find a partial isometry $w_2'\in L\Gamma\ovt \Gamma$ that implements their equivalence.
Set $u=v_1w_1+ v_2w_2 w_2'$ and one verifies that $u$ is a unitary and $u^*N u\subset L\Gamma\ovt L\Gamma$.
\end{proof}

\begin{proof}[Proof of Theorem~\ref{thm: recover Bernoulli}]
This follows directly from the proof of \cite[Theorem 8.2]{IoPoVa13}, as its step 1 is established in Proposition~\ref{prop: conjugate Delta}.
\end{proof}

The following elementary lemma may be derived directly from \cite[Theorem 6.2]{Ioa11}.
We nevertheless give a straightforward proof. 
\begin{lem}\label{lem: normal abelian subgroup}
Let $H$ be an abelian group and $\Gamma$ be an i.c.c.\ group that acts on $H$ by automorphisms.
Suppose $\Gamma\actson \widehat H$ is conjugate to the Bernoulli action $\Gamma\actson [0,1]^\Gamma$.
Then for any nontrivial normal abelian subgroup $K\lhd H\rtimes\Gamma$, we have $K<H$.
\end{lem}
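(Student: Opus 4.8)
The statement concerns a nontrivial normal abelian subgroup $K \lhd H\rtimes\Gamma$, and we want to show $K < H$. First I would set up notation: write elements of $H\rtimes\Gamma$ as $(h,g)$ with $h\in H$, $g\in\Gamma$, and let $\pi: H\rtimes\Gamma \to \Gamma$ be the quotient homomorphism with kernel $H$. So the goal is to show $\pi(K) = \{e\}$. Suppose not; then $\pi(K)$ is a nontrivial subgroup of $\Gamma$, and since $K$ is normal in $H\rtimes\Gamma$ and $\pi$ is surjective, $\pi(K)$ is normal in $\Gamma$. Also, since $K$ is abelian, $\pi(K)$ is abelian. So $\Gamma$ would have a nontrivial normal abelian subgroup $\pi(K)$; but $\Gamma$ is i.c.c., and an i.c.c.\ group has no nontrivial normal abelian (indeed no nontrivial finite normal) subgroup... wait, that only rules out nontrivial \emph{finite} normal subgroups. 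An i.c.c.\ group can certainly have infinite normal abelian subgroups? No — actually, if $A \lhd \Gamma$ is abelian and nontrivial, pick $a \in A\setminus\{e\}$; its conjugacy class lies in $A$, and $A$ centralizes $a$, so the conjugacy class of $a$ equals $\{gag^{-1} : g\in\Gamma\}$, which is the orbit under the conjugation action; the centralizer of $a$ contains $A$. Hmm, this does not immediately give finiteness. Let me reconsider: i.c.c.\ means every nontrivial conjugacy class is infinite, which does \emph{not} by itself forbid infinite normal abelian subgroups (e.g.\ $\mathbb{Z} \wr \mathbb{Z}$-type examples). So the i.c.c.\ hypothesis alone is not enough; one must genuinely use the Bernoulli conjugacy.

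So the real plan is: I would use the conjugacy $\Gamma\actson\widehat H \cong \Gamma\actson[0,1]^\Gamma$ to analyze $\Gamma$-invariant structure. A nontrivial normal abelian $K \lhd H\rtimes\Gamma$ with $\pi(K)\neq\{e\}$ would, after intersecting with $H$, give $K\cap H$ a $\Gamma$-invariant (since $\pi(K)$ normal and $K$ normal) subgroup of $H$, hence a $\Gamma$-invariant closed subgroup $\overline{(K\cap H)^\perp}$ or rather a quotient system of $\widehat H$. The key point: under the Bernoulli conjugacy, any $\Gamma$-invariant sub-$\sigma$-algebra of $[0,1]^\Gamma$ is well understood — by the structure of Bernoulli actions (or rather the factors of Bernoulli shifts), $\Gamma$-invariant subgroups of $H = \widehat{[0,1]^\Gamma}$... hmm, but $[0,1]^\Gamma$ is not literally a group. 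I think the cleaner route is: since $K$ is abelian and normal, $K$ centralizes $K\cap H$, and $\pi(K)$ acts trivially on $K \cap H$ under the $\Gamma$-action (as conjugation by $K$ on $K\cap H$ is trivial). If $K\cap H\neq\{e\}$, then $\widehat H$ has a nontrivial quotient with trivial $\pi(K)$-action, contradicting that the Bernoulli action $\Gamma\actson[0,1]^\Gamma$ restricted to the infinite subgroup $\pi(K)$ is (weakly mixing, hence) free with no invariant sets — more precisely, restricting a Bernoulli action to any infinite subgroup is again essentially free and ergodic, so no nontrivial character of $H$ is $\pi(K)$-fixed, forcing $K\cap H = \{e\}$. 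But then $K \cong \pi(K)$ via $\pi$, and $K$ is a normal abelian subgroup of $H\rtimes\Gamma$ with $K\cap H=\{e\}$; normality forces $[K, H]\subset K\cap H = \{e\}$, so $H$ is pointwise fixed by the $\Gamma$-action of $\pi(K)$... again contradicting weak mixing of the Bernoulli action restricted to the infinite subgroup $\pi(K)$ (which acts with no nonzero invariant vectors in $L^2([0,1]^\Gamma)\ominus\mathbb{C}$), unless $H$ is trivial.

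Concretely, here is the argument I would write. Assume $\pi(K) \neq \{e\}$ and derive a contradiction; conclude $K < H$. Since $K\lhd H\rtimes\Gamma$ and $\pi$ is onto, $\Lambda_0 := \pi(K) \lhd \Gamma$ is a nontrivial normal subgroup, hence infinite (as $\Gamma$ is i.c.c., every nontrivial normal subgroup is infinite — a nontrivial finite normal subgroup would be a finite union of finite conjugacy classes, impossible). Because $K$ is abelian, conjugation by any $k\in K$ fixes $K\cap H$ pointwise; choosing $k$ with $\pi(k) = g_0$ arbitrary in $\Lambda_0$ shows the $\Gamma$-action $g_0 \cdot x = g_0 x g_0^{-1}$ (which is just $\sigma_{g_0}$ on $H$ up to the inner perturbation by the $H$-part, but modulo $H$ it is exactly $\sigma_{g_0}$ on $H$, and on elements of $H$ it \emph{is} $\sigma_{g_0}$) fixes $K\cap H$ pointwise, i.e.\ $\Lambda_0$ acts trivially on $K\cap H \leq H$. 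Dualizing, $\widehat{K\cap H}$ is a nontrivial quotient group of $\widehat H$ on which $\Lambda_0$ acts trivially — wait, I need to be careful about which direction dualizes; $K\cap H \leq H$ gives a surjection $\widehat H \twoheadrightarrow \widehat{K\cap H}$ of compact groups, $\Gamma$-equivariant, with $\Lambda_0$ acting trivially on the target. If $K\cap H\neq \{e\}$ then $\widehat{K\cap H}$ is a nontrivial compact group and the composite $[0,1]^\Gamma \cong \widehat H \twoheadrightarrow \widehat{K\cap H}$ exhibits a nontrivial $\Lambda_0$-invariant factor of the Bernoulli action $\Gamma\actson[0,1]^\Gamma$; but the restriction of a Bernoulli action to an infinite subgroup is ergodic (indeed weakly mixing), so it admits no nontrivial invariant (= trivial-action) factor, a contradiction. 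Hence $K\cap H = \{e\}$. Then $\pi|_K: K \xrightarrow{\sim}\Lambda_0$ is an isomorphism, and for $k\in K$, $h\in H$ we have $[k,h]\in K\cap H = \{e\}$ (using $K$ normal so $[k,h]\in K$, and $[k,h]\in H$ trivially), so $\sigma_{g}(h) = h$ for all $g\in\Lambda_0$, $h\in H$; i.e.\ $\Lambda_0$ acts trivially on all of $H$, hence trivially on $\widehat H \cong [0,1]^\Gamma$. Since $\Lambda_0$ is infinite, this contradicts essential freeness (or just ergodicity and nontriviality) of $\Gamma\actson[0,1]^\Gamma$ restricted to $\Lambda_0$, provided $H$ — equivalently $[0,1]^\Gamma$ — is nontrivial, which it is. (If one worries about the degenerate case $H = \{e\}$: then $\widehat H$ is a point and the conjugacy hypothesis is vacuous/impossible since $[0,1]^\Gamma$ is nontrivial, so this case does not arise.) This completes the proof.

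The main obstacle, and the only nonformal input, is the fact that \emph{restricting a Bernoulli action to an infinite subgroup remains ergodic / has no nontrivial invariant factors} — this is where the Bernoulli hypothesis is genuinely used, and it is classical (mixing of Bernoulli shifts along any infinite subset, hence weak mixing of the restriction). Everything else is elementary group theory: normality bookkeeping, the i.c.c.\ property forcing nontrivial normal subgroups to be infinite, and Pontryagin duality converting subgroups of $H$ into equivariant quotients of $\widehat H$. I would be slightly careful in stating the duality step and in distinguishing the action of $\Lambda_0 = \pi(K)$ "by conjugation inside $H\rtimes\Gamma$" from "the structural action $\sigma$ on $H$" — on the subgroup $H$ these coincide, which is the only place the identification is needed.
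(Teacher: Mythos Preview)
Your proof is correct and takes a genuinely different route from the paper's.

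The paper argues in a different order: it first shows $K\cap H\neq\{e\}$, by noting that $K\cap H=\{e\}$ forces $K$ to centralize $H$ and then using that $LH$ is maximal abelian in $L(H\rtimes\Gamma)$ to conclude $K\subset H$, a contradiction --- this is essentially your Case~2 argument, phrased via von Neumann algebras rather than via freeness. It then picks a nontrivial $g\in K\cap H$, attaches to it a finite nonempty ``support set'' $F_g\subset\Gamma$ built from the Bernoulli product structure, and observes that for any $hs\in K$ (with $h\in H$, $s\in\Gamma$) the relations $[hs,\,tgt^{-1}]=e$ for all $t\in\Gamma$ force each $t^{-1}st$ to permute $F_g$; finiteness of $F_g$ then gives $s$ a finite conjugacy class, so $s=e$ by the i.c.c.\ hypothesis.

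Your argument replaces the support-set combinatorics by the single dynamical fact that the restriction of a Bernoulli shift to any infinite subgroup is ergodic. This is cleaner and more conceptual --- it avoids both the explicit $F_{f,\varepsilon}$ construction and the maximal-abelian input --- and it isolates exactly where each hypothesis enters: i.c.c.\ makes $\pi(K)$ infinite, and the Bernoulli structure supplies ergodicity and essential freeness of the restriction. The paper's approach, by contrast, is more hands-on with the product structure and stays within the ambient operator-algebraic language, which is natural given how the lemma is used. Both reach the goal with comparable effort.
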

\begin{proof}
First we show that $K\cap H$ is nontrivial.
Indeed, from $K\cap H=\{e\}$ one sees that $K$ is in the centralizer of $H$ in $H\rtimes \Gamma$.
As $LH\subset L(H\rtimes\Gamma)$ is maximal abelian, we have $LK\subset LH$, contradicting $K\cap H=\{e\}$.

We denote by $E_S$ the conditional expectation from $\ovt_{\Gamma}L^\infty([0,1])$
	to $\ovt_S L^\infty([0,1])$ for a subset $S\subset \Gamma$.
For any function $f\in \ovt _\Gamma L^\infty([0,1])$ and any $\varepsilon>0$, one may consider
$$
F_{f,\varepsilon}:=\{t\in \Gamma\mid \|P_t(f)-f\|_2>\varepsilon\},
$$
where $P_t=E_{\Gamma\setminus\{t\}}$.
We show that for a non-scalar $f$, we may find some $\varepsilon$ such that $F_{f,\varepsilon}$ is a nonempty finite set.

If $F_{f,\varepsilon}$ was empty for any $\varepsilon>0$,
	then one would have $P_t(f)=f$ for any $t\in \Gamma$
	and hence $(\prod_{t\in S} P_t)(f)=f$ for any finite $S\subset \Gamma$.
For any $\varepsilon>0$, take $S\subset \Gamma$ a finite set such that 
	$\|E_S(f)-f\|_2<\varepsilon$.
Then
$\|\tau(f)-f\|_2=\|(\prod_{t\in S} P_s)(E_S(f)-f)\|_2<\varepsilon$
	and hence $f=\tau(f)$.
	
To see $F_{f,\varepsilon}$ is finite for any $\varepsilon>0$,
we may find some finite subset $S\subset \Gamma$ such that $\|f-E_{S}(f)\|_2<\varepsilon/ 2$.
Notice that for any $t\notin S$, we have 
$P_t(E_{S}(f))=E_{S}(f)$ and hence
$P_t(f)-f=P_t(f-E_{S}(f))+ E_{S}(f))-f$.
%
As $P_t$ is $\|\cdot\|_2$-continuous, we have 
	$\|P_t(f)-f\|_2\leq 2\|f-E_{S}(f))\|_2<\varepsilon$ for $t\notin S$.


Since $K\cap H$ is nontrivial, we may take some nontrivial $g\in K\cap H$ and view $\lambda_g\in LH\cong L^\infty([0,1])^\Gamma$ as a function that is not a scalar
	as $\tau_{L(H\rtimes\Gamma)}(\lambda_g)=0$.
Thus we may find some $\varepsilon>0$ such that $F_g:=F_{\lambda_g, \varepsilon}\subset \Gamma$ is a finite nonempty set.
Note that if $s\in \Gamma$ fixes $g$, then $sF_g=F_g$ as $\pi_t(\sigma_s(f))= \pi_{s^{-1}t}(f)$ for any $t\in \Gamma$,
	where we denote by $\sigma$ the Bernoulli action $\Gamma\actson [0,1]^\Gamma$.

For any $hs\in K$ with $h\in H$ and $s\in \Gamma$, since $K$ is abelian and normal, we have $[hs, t g t^{-1}]=e$ for any $t\in \Gamma$.
In particular, this implies $t^{-1} s tF_g=F_g$ for any $t\in \Gamma$, which in turn shows that $\{t^{-1} s t\mid t\in \Gamma\}$ is finite,
	as $F_g\subset \Gamma$ is a nonempty finite set.
As $\Gamma$ is i.c.c., we conclude $s$ must be trivial and hence $K<H$.
\end{proof}

\begin{proof}[Proof of Theorem~\ref{thm: semidirect product CMAP}]
Suppose $\Gamma$ is in addition properly proximal, this follows from \cite[Theorem 1.5]{BoIoPe21}.

If $\Gamma$ is non properly proximal, then by Theorem~\ref{thm: recover Bernoulli} 
	one obtains the conclusion of \cite[Theorem 8.2]{IoPoVa13}:
	there exists a group isomorphism $\delta: \widehat Y\rtimes\Lambda\to  H\rtimes \Gamma$ for some abelian group $H$ and $\Gamma\actson^\alpha H$ by automorphisms,
	a $*$-isomorphism $\theta: L(H)\to L(\oplus_\Gamma\Z)$ satisfying $\theta\circ \alpha_g=\sigma_g\circ \theta$ for all $g\in\Gamma$,
	a character $\eta: \Z\wr\Gamma\to \C$ and a unitary $w\in L(\Z\wr\Gamma)$
	such that the isomorphism $\pi: L(\widehat Y\rtimes\Lambda)=L(\Z\wr\Gamma)$ is given by 
	$\pi=\Ad(w)\circ \pi_\eta \circ \pi_\theta \circ \pi_\delta$,
	with 
 	$$\pi_\delta: L(\widehat Y\rtimes\Lambda)\ni \lambda_g\to \lambda_{\delta(g)}\in L(H\rtimes\Gamma),$$
 	$$\pi_\theta: L(H)\rtimes\Gamma\ni au_g\mapsto \theta(a)u_g\in L(\oplus_\Gamma\Z)\rtimes\Gamma,$$
 	$$\pi_\eta: L(\Z\wr\Gamma)\ni \lambda_g\to \eta(g)\lambda_g\in L(\Z\wr\Gamma).$$
 	
By Lemma~\ref{lem: normal abelian subgroup}, we have $\delta(\widehat Y)<H$ and hence 
$\Ad(w^*)\circ \pi(L^\infty(Y))=L(\oplus_\Gamma\Z)$.
%
%
%
\end{proof}

\bibliographystyle{amsalpha}
\bibliography{ref}

\end{document}